\documentclass[reqno, a4paper]{amsart}

\usepackage[T1]{fontenc}
\usepackage[british]{babel}
\usepackage{graphicx}
\usepackage{amsfonts}
\usepackage{amssymb}
\usepackage{amsthm}
\usepackage{amsmath}
\usepackage[all]{xy}
\usepackage{hyperref}
\usepackage{mathbbol}
\usepackage{mathabx}

\theoremstyle{plain}
\newtheorem{theorem}[subsection]{Theorem}
\newtheorem{lemma}[subsection]{Lemma}
\newtheorem{proposition}[subsection]{Proposition}

\theoremstyle{definition}
\newtheorem{definition}[subsection]{Definition}
\newtheorem{remark}[subsection]{Remark}
\newtheorem{notation}[subsection]{Notation}
\newtheorem{example}[subsection]{Example}
\newtheorem{examples}[subsection]{Examples}

\setcounter{tocdepth}{1}

\newcommand{\defn}{\textbf}

\newcommand{\comp}{\raisebox{0.2mm}{\ensuremath{\scriptstyle{\circ}}}}
\newcommand{\del}{\partial}
\renewcommand{\implies}{$\Rightarrow$}
\renewcommand{\equiv}{$\Leftrightarrow$}
\newcommand{\noproof}{\hfill \qed}

\newcommand{\normal}{\ensuremath{\triangleleft}}
\newcommand{\join}{\ensuremath{\vee}}
\newcommand{\meet}{\ensuremath{\wedge}}
\newcommand{\tensor}{\ensuremath{\otimes}}
\newcommand{\bigtensor}{\ensuremath{\bigotimes}}
\newcommand{\point}{\ensuremath{\rightleftarrows}}
\newcommand{\comma}{\downarrow}

\newcommand{\som}[2]{\bigl\langle\begin{smallmatrix} {#1} \\ {#2}\end{smallmatrix}\bigr\rangle}
\newcommand{\bigsom}[2]{\left\langle\begin{smallmatrix} {#1} \\ {#2}\end{smallmatrix}\right\rangle}
\newcommand{\somm}[3]{\left\langle\begin{smallmatrix} {#1} \\ {#2} \\ {#3}\end{smallmatrix}\right\rangle}

\newcommand{\coeq}{\ensuremath{\mathrm{coeq}}}
\newcommand{\Coeq}{\ensuremath{\mathrm{Coeq}}}

\renewcommand{\H}{\ensuremath{\mathrm{H}}}
\renewcommand{\ker}{\ensuremath{\mathrm{ker}}}
\newcommand{\Ker}{\ensuremath{\mathrm{Ker}}}
\renewcommand{\Im}{\ensuremath{\mathrm{Im}}}
\newcommand{\im}{\ensuremath{\mathrm{im}}}
\newcommand{\cre}{\ensuremath{\mathrm{cr}}}
\newcommand{\Huq}{\ensuremath{\mathrm{Huq}}}
\newcommand{\Smith}{\ensuremath{\mathrm{S}}}

\newcommand{\A}{\ensuremath{\mathcal{A}}}

\newcommand{\C}{\ensuremath{\mathcal{C}}}
\newcommand{\D}{\ensuremath{\mathcal{D}}}
\newcommand{\V}{\ensuremath{\mathcal{V}}}

\newcommand{\Ab}{\ensuremath{\mathsf{Ab}}}

\newcommand{\CAlg}{\ensuremath{\mathsf{CAlg}}}

\newcommand{\Gp}{\ensuremath{\mathsf{Gp}}}
\newcommand{\Loop}{\ensuremath{\mathsf{Loop}}}
\newcommand{\Mod}{\ensuremath{\mathsf{Mod}}}
\newcommand{\Mal}{\ensuremath{\mathsf{Mal}}}
\newcommand{\Pt}{\ensuremath{\mathsf{Pt}}}
\newcommand{\PXMod}{\ensuremath{\mathsf{PXMod}}}
\newcommand{\RG}{\ensuremath{\mathsf{RG}}}

\newcommand{\Set}{\ensuremath{\mathsf{Set}}}
\newcommand{\XMod}{\ensuremath{\mathsf{XMod}}}

\newcommand{\ab}{\ensuremath{\mathsf{ab}}}

\newcommand{\KK}{\ensuremath{K}}
\newcommand{\ZZ}{\ensuremath{\mathbb{Z}}}

\hyphenation{cat-e-go-ri-cal cat-e-go-ries e-quiv-a-len-ces e-quiv-a-len-ce co-ker-nels gen-er-al-ised ex-act-ness ex-ten-sion Ja-ne-lid-ze pro-jec-tive nil-po-tent co-ker-nel a-sphe-ri-cal group-oid al-ter-na-ting semi-lat-tice semi-lat-tices e-quiv-a-lent abe-li-an-i-sa-tion com-mu-ta-tor com-mu-ta-tors prop-o-si-tion cen-tral def-i-ni-tion re-sult re-sults di-rec-tion di-rec-tions com-po-nent com-po-nents de-nom-i-na-tor in-ter-pre-ta-tion di-groups sub-sec-tion cen-tral-i-ty co-ho-mo-lo-gy di-men-sion-al be-tween de-ter-mined de-ter-mine op-er-a-tor op-er-a-tors al-ge-bras can-di-dates ma-trix mul-ti-pli-ca-tions na-tu-ral-ity mul-ti-pli-ca-tive de-ter-mine de-ter-mines more-o-ver sim-i-lar-ly ap-pli-ca-tions ad-mis-si-ble con-ju-ga-tion com-mu-ta-tive co-prod-ucts as-so-ci-a-tor as-so-ci-a-tors com-mu-ta-tive com-mu-ta-ti-vi-ty de-nor-mal-i-sa-tion as-so-ci-a-tive ar-ti-cle ar-ti-cles cor-re-spond cor-re-spond-ence mor-phism mo-no-mor-phism ho-mo-mor-phism epi-mor-phism ef-fec-tive pres-en-ta-tion}

\newdir{>>}{{}*!/3.5pt/:(1,-.2)@^{>}*!/3.5pt/:(1,+.2)@_{>}*!/7pt/:(1,-.2)@^{>}*!/7pt/:(1,+.2)@_{>}}
\newdir{ >>}{{}*!/8pt/@{|}*!/3.5pt/:(1,-.2)@^{>}*!/3.5pt/:(1,+.2)@_{>}}
\newdir{ |>}{{}*!/-3.5pt/@{|}*!/-8pt/:(1,-.2)@^{>}*!/-8pt/:(1,+.2)@_{>}}
\newdir{ >}{{}*!/-8pt/@{>}}
\newdir{>}{{}*:(1,-.2)@^{>}*:(1,+.2)@_{>}}
\newdir{<}{{}*:(1,+.2)@^{<}*:(1,-.2)@_{<}}

\newbox\pullbackbox
\setbox\pullbackbox=\hbox{\xy 0;<1mm,0mm>: \POS(4,0)\ar@{-} (0,0) \ar@{-} (4,4)
\endxy}
\def\pullback{\copy\pullbackbox}

\begin{document}

\author{Manfred Hartl}
\address{Universit\'e Lille Nord de France, F-59044~Lille, France\newline
UVHC, LAMAV and FR CNRS 2956, F-59313~Valenciennes, France}
\thanks{The first author wishes to thank the Centro de Matem\'atica da Universidade de Coimbra, the Institut de recherche en math\'ematique et physique, the Max-Planck-Institut f\"ur Mathematik and the Chern Institute for their kind hospitality during his stay at Coimbra, at Louvain-la-Neuve, at Bonn and at Tianjin}
\email{Manfred.Hartl@univ-valenciennes.fr}

\author{Tim Van~der Linden}
\address{CMUC, Universidade de Coimbra, 3001--454 Coimbra, Portugal\newline
Institut de recherche en math\'ematique et physique, Universit\'e catholique de Louvain, chemin du cyclotron~2 bte~L7.01.02, 1348 Louvain-la-Neuve, Belgium}
\thanks{The second author works as \emph{charg\'e de recherches} for Fonds de la Recherche Scientifique--FNRS. His research was supported by Centro de Matem\'atica da Universidade de Coimbra and by Funda\c c\~ao para a Ci\^encia e a Tecnologia (grant number SFRH/BPD/38797/2007). He wishes to thank the Max-Planck-Institut f\"ur Mathematik and UCT for their kind hospitality during his stay at Bonn and at Cape Town}
\email{tim.vanderlinden@uclouvain.be}

\title[The ternary commutator obstruction]{The ternary commutator obstruction\\ for internal crossed modules}

\keywords{co-smash product; tensor product; cross-effect; commutator; internal crossed module; semi-abelian category; Hopf formula}

\subjclass[2010]{18D50, 18D35, 18E10, 20J15}

\date{\today}

\begin{abstract}
In finitely cocomplete homological categories, co-smash products give rise to (possibly higher-order) commutators of subobjects. We use binary and ternary co-smash products and the associated commutators to give characterisations of internal crossed modules and internal categories, respectively. The ternary terms are redundant if the category has the \emph{Smith is Huq} property, which means that two equivalence relations on a given object commute precisely when their normalisations do. In fact, we show that the difference between the Smith commutator of such relations and the Huq commutator of their normalisations is measured by a ternary commutator, so that the \emph{Smith is Huq} property itself can be characterised by the relation between the latter two commutators. This allows to show that the category of loops does not have the \emph{Smith is Huq} property, which also implies that ternary commutators are generally not decomposable into nested binary ones. 

Thus, in contexts where \emph{Smith is Huq} need not hold, we obtain a new description of internal categories, Beck modules and double central extensions, as well as a decomposition formula for the Smith commutator. The ternary commutator now also appears in the Hopf formula for the third homology with coefficients in the abelianisation functor. 
\end{abstract}

\maketitle

\section*{Introduction}
Internal crossed modules in a semi-abelian category~\cite{Janelidze-Marki-Tholen} were introduced by \mbox{Janelidze} in the article~\cite{Janelidze}. His starting point is the desired correspondence between crossed modules and internal categories, which determines the basic properties that crossed modules should satisfy. His definition is based on the concept of internal action which he introduced with Bourn in~\cite{Bourn-Janelidze:Semidirect} and which is further worked out in the article~\cite{BJK}. 

He explains that the extension of the case of groups to semi-abelian categories is not entirely without difficulties. The most straightforward description of the concept of crossed module merely gives so-called \emph{star-multiplicative graphs}---in which the composition of morphisms is only defined locally around the origin---and not the internal groupoids one would expect, in which every composable pair of morphisms can actually be composed. This defect can be mended, as it is indeed done in~\cite{Janelidze}. Unfortunately, the resulting characterisation of internal crossed modules becomes slightly more complicated than expected after considering the groups case. 

This gave rise to the question, whether every star-multiplicative graph can be equipped with a unique internal groupoid structure. It turns out~\cite{MFVdL} that the gap between the two is precisely as big as the gap between the Huq commutator of normal subobjects and the Smith commutator of internal equivalence relations. That is to say, in a semi-abelian category they are equivalent if and only if the \emph{Smith is Huq} condition holds. This explains why the difference between the two concepts is invisible in the category of groups, in fact in any of the concrete algebraic categories where internal crossed modules were ever studied: all of those are strongly protomodular (and action accessible), which as we know implies the \emph{Smith is Huq} condition. 

Introducing ternary commutators gives a different view on the situation, more natural in a sense: just as internal groupoids can be described as internal reflexive graphs with a certain binary (Smith) commutator being trivial, now we can say that internal groupoids may also be described as internal star-multiplicative graphs for which a certain ternary (Higgins) commutator is zero. Equivalently, a certain coherence condition involving ternary co-smash products holds for the associated internal precrossed module (satisfying the Peiffer condition).
A~byproduct of this analysis is that the context is enlarged to a non-exact setting (being careful with the notion of star-multiplicativity), as we may mostly work in finitely cocomplete homological categories instead of semi-abelian ones.

\subsection*{Internal actions}
It is well known that \emph{every split epimorphism of groups is a semi-direct product projection}. This fact gives rise to an equivalence between the category $\Pt_{B}(\Gp)$ of split epimorphisms of groups (with chosen splitting) with codomain $B$ and the category of $B$-groups. Similarly~\cite{Bourn-Janelidze:Semidirect, BJK}, in a semi-abelian category~$\A$, internal actions correspond to split epimorphisms. Furthermore, since the kernel functor~${\Pt_{B}(\A)\to \A}$ is monadic for every object $B$ in $\A$, the internal actions are defined as the algebras over the corresponding monad.

The right adjoint ${\A\to \Pt_{B}(\A)}$ sends an object $X$ of $\A$ to the epimorphism $\som{1_{B}}{0}\colon {B+X\to B}$ split by $\iota_{B}\colon{B\to B+X}$. As a consequence, the induced monad $(B\flat(-),\mu^{B},\eta^{B})$ on $\A$ is defined, as a functor, by
\[
B\flat X=\Ker\bigl(\som{1_{B}}{0}\colon {B+X\to B}\bigr).
\]
Hence a $B$-action on $X$ is a morphism $\xi\colon{B\flat X\to X}$ satisfying the algebra axioms, and any such $\xi$ corresponds to a split epimorphism ${X\ltimes_{\xi}B\point B}$.

In contrast to the presentation in~\cite{Bourn-Janelidze:Semidirect, BJK} we are interested in replacing $B\flat X$ with the object $B\tensor X$ which can be defined via the split short exact sequence 
\begin{equation}\label{tensor vs bemol}
\vcenter{\xymatrix{0 \ar[r] & B\tensor X \ar@{.>}[rd]_-{\psi} \ar@{{ |>}->}[r] & B\flat X \ar@{.>}[d]_-{\xi} \ar@<.5ex>@{-{ >>}}[r] & X \ar@{.>}[ld]^-{1_{X}} \ar@<.5ex>@{{ >}->}[l]^(.55){\eta_{X}^{B}} \ar[r] & 0\\
&& X }}
\end{equation}
(of solid arrows). The dotted arrow $\xi$ in this diagram represents an action in the sense of~\cite{Bourn-Janelidze:Semidirect, BJK}, while the dotted arrow $\psi$ is the morphism we are interested in replacing $\xi$ with. Assuming that diagram~\eqref{tensor vs bemol} commutes, observing that $\xi$ and $1_{X}$ are jointly epic, we can say that $\xi$ and $\psi$ determine each other. We think of $\psi$ as a ``fragment'' of $\xi$ which, however, determines it~\cite{Actions}.

Our strategy is to characterise internal crossed modules through such fragments of actions ${B\tensor X\to X}$ instead of the morphisms ${B\flat X\to X}$ which determine them. Thus we shall actually be considering the algebras of the endofunctor $B\tensor(-)$ rather than those of the monad $(B\flat(-),\mu^{B},\eta^{B})$.

The disadvantage of this approach is that we have to keep track of which ${B\tensor(-)}$-algebras, that is, which morphisms ${B\tensor X\to X}$, do determine an action---especially since we shall be working in a non-exact context where, as explained in~\cite{MFS}, only certain $(B\flat(-),\mu^{B},\eta^{B})$-algebras will correspond to split epimorphisms. The advantage is that the object $B\tensor X$ turns out to be a \emph{co-smash product}, and these can be used to define (higher-order) commutators:

\subsection*{Commutators via co-smash products}
Here the basic idea---which was discovered independently in~\cite{Actions} and~\cite{MM-NC}---is that the \defn{co-smash product} or \defn{tensor product}~\cite{Smash}
\[
K\tensor L=\Ker\bigl(\bigl\langle\begin{smallmatrix} 1_{K} & 0 \\
0 & 1_{L}\end{smallmatrix}\bigr\rangle
\colon K+L\to K\times L\bigr)
\]
of objects $K$, $L$ in a finitely cocomplete homological category behaves as a kind of ``formal commutator'' of~$K$ and~$L$. If now $k\colon K \to X$ and $l\colon L \to X$ are subobjects of an object $X$, then their \defn{(Higgins) commutator} $[K,L]\leq X$ is the image of the induced morphism
\[
\xymatrix@=2em{K\tensor L \ar@{{ |>}->}[r]^-{\iota_{K,L}} & K+L \ar[r]^-{\bigsom{k}{l}} & X.}
\]
Using higher co-smash products it is easy to extend this definition to higher-order commutators: for instance, given a third subobject $m\colon M \to X$ of $X$, the ternary commutator $[K,L,M]\leq X$ is the image of the composite
\[
\xymatrix@=3em{K\tensor L\tensor M \ar@{{ |>}->}[r]^-{\iota_{K,L,M}} & K+L+M \ar[r]^-{\somm{k}{l}{m}} & X,}
\]
where $\iota_{K,L,M}$ is the kernel of
\[
\xymatrix@=8em{K+L+M \ar[r]^-{\left\langle\begin{smallmatrix} \iota_{K} & \iota_{K} & 0 \\
\iota_{L} & 0 & \iota_{L}\\
0 & \iota_{M} & \iota_{M}\end{smallmatrix}\right\rangle} & (K+L)\times (K+M) \times (L+M).}
\]
The basic properties of the (binary) Higgins commutator are explored in the articles~\cite{Actions} and~\cite{MM-NC}. In the former it is also explained how this commutator is related to internal actions. We shall recall some of this in sections~\ref{Section-Co-Smash} and~\ref{Section-Semi-Direct-Products}.

\subsection*{The ternary commutator obstruction}
One of the main results of the present paper is that the \emph{Smith is Huq} condition for finitely cocomplete homological categories may be expressed in terms of co-smash products as the vanishing of a ternary commutator. 

Indeed, we prove that for equivalence relations $R$ and $S$ on $X$ with normalisations $K$, $L\normal X$, respectively, the relations $R$ and $S$ centralise each other in the sense of Smith~\cite{Smith} if and only if the commutators $[K,L]$ and $[K,L,X]$ are trivial. Since $[K,L]=0$ precisely when $K$ and $L$ commute in the sense of Huq~\cite{Huq}, the object~$[K,L,X]$ is the \defn{ternary commutator obstruction} for the \emph{Smith is Huq} condition, that is, $[K,L,X]\leq [K,L]^{\Huq}$ precisely when the Huq commutator is the normalisation of the Smith commutator.

The fact that in the category of groups and in other familiar algebraic categories the \emph{Smith is Huq} condition holds, agrees with the fact that here all ternary commutators are expressible in terms of binary ones. In general, though, ternary commutators cannot be written in terms of repeated binary commutators.

This new viewpoint on the \emph{Smith is Huq} condition gives new examples of categories which satisfy it. A \emph{nilpotent category of class $2$} is a semi-abelian category whose identity functor is \emph{quadratic}, which means that it has a trivial ternary co-smash product~\cite{CCC}. Hence, almost by definition, any such category satisfies \emph{Smith is Huq}. In particular, the \emph{Smith is Huq} condition holds for modules over a square ring, and specifically for algebras over a nilpotent operad of class two~\cite{BHP}.

On the other hand, the category of loops (quasigroups with an identity) does not satisfy \emph{Smith is Huq}: we give an example of a loop $X$ with an abelian subloop $A$ and elements $a\in A$, $x\in X$ such that the associator element $\ldbrack a,a,x\rdbrack$ is non-trivial. (In fact, one of the first examples of a non-associative structure ever considered will do, see Example~\ref{Example-Loops}. Freese and McKenzie give another example of this situation in their book~\cite{Freese-McKenzie}.) This proves that the ternary commutator $[A,A,X]$, which contains all such associator elements, need not vanish even when the binary commutator $[A,A]$ does. As a consequence, $\Loop$ is not action accessible or strongly protomodular---though it is well known to be semi-abelian~\cite{Borceux-Clementino}.

\subsection*{Characterisation of internal crossed modules}
We shall consider quadruples $(G,A,\mu ,\del)$ in which $G$ and $A$ are objects, $\mu \colon{A\tensor G\to A}$ determines an action of~$G$ on~$A$, and $\del\colon{A \to G}$ is a morphism. We prove that such a quadruple is a crossed module if and only if the following three diagrams commute.
\[
{\xymatrix{
A\tensor G \ar[r]^-\mu \ar[d]_{\del\tensor 1_{G}} & A\ar[d]^{\del}\\
G\tensor G\ar[r]_-{c^{G,G}} & G}}
\qquad
{\xymatrix{
A\tensor A \ar[r]^-{c^{A,A}} \ar[d]_-{1_{A}\tensor \del} & A \ar@{=}[d] \\
A\tensor G \ar[r]_-{\mu} & A}}
\qquad
{\xymatrix{
A\tensor A\tensor G \ar[r]^-{\mu_{2,1}} \ar[d]_-{1_{A}\tensor \del\tensor 1_{G}} & A \ar@{=}[d] \\
A\tensor G\tensor G \ar[r]_-{\mu_{1,2}} & A}}
\]
The first diagram expresses the \emph{precrossed module condition} which says that the morphism~$\del$ is $G$-equivariant with respect to $\mu$ and the conjugation action~$c^{G,G}$ of~$G$ on itself~\cite{Janelidze-Marki-Ursini}. Quadruples which satisfy this first condition correspond to internal reflexive graphs. Commutativity of the middle diagram is the so-called \emph{Peiffer condition}: the conjugation action $c^{A,A}$ of $A$ on itself coincides with the pullback~$\del^{*}(\mu)$ of~$\mu$ along~$\del$. Quadruples which satisfy the first two conditions correspond to  \emph{Peiffer graphs} in the sense of~\cite{MM}, which admit some kind of composition locally around the origin, and which are equivalent to having a star-multiplicative graph structure (\cite{Janelidze, MM}, see also~\cite{MFVdL}). The diagram on the right commutes when the local composition of the star-multiplication extends to a globally defined internal groupoid structure.

\subsection*{Internal categories in a homological category}
Our analysis of internal crossed modules depends on a new characterisation of internal categories in terms of commutators, valid in any finitely cocomplete homological category. Let us just mention here that an internal reflexive graph
\[
\xymatrix@!0@=5em{R \ar@<1ex>[r]^-{d} \ar@<-1ex>[r]_-{c} & G \ar[l]|-{e}}
\qquad\qquad
d\comp e = c\comp e = 1_{G}
\]
will be an internal category when either of the following equivalent conditions holds (Theorem~\ref{Theorem-Internal-Categories}):
\begin{itemize}
\item $[\Ker(d),\Ker(c)]=0=[\Ker(d),\Ker(c),R]$;
\item $[\Ker(d),\Ker(c)]=0=[\Ker(d),\Ker(c),\Im(e)]$;
\item the morphism $c^{A,R}\colon{A\tensor R\to A}$ induced by the conjugation action of $R$ on~$A=\Ker(d)$ factors through $1_{A}\tensor c\colon{A\tensor R\to A\tensor G}$;
\item $c^{A,R}=(e\comp c)^{*}(c^{A,R})$.
\end{itemize}

\subsection*{Beck modules}
As a special case we find a new characterisation of the concept of \emph{Beck module}---which, via~\cite{Bourn-Janelidze:Semidirect, Bourn-Janelidze:Torsors}, is the same things as an abelian action---in terms of tensor products: a $G$-action on an abelian object $A$ determined by a morphism $\psi\colon{A\tensor G\to A}$ is a $G$-module structure on $A$ if and only if a certain induced morphism $\psi_{2,1}\colon{A\tensor A\tensor G\to A}$ is trivial.

\subsection*{An application in homology}
We give a concrete application of these results in semi-abelian homology. First we characterise double central extensions~\cite{EGVdL, Janelidze:Double, Janelidze-Kelly, RVdL} in terms of binary and ternary commutators, and then we apply the main result of~\cite{EGVdL} obtain a Hopf formula for the third homology of an object $Z$ with coefficients in the abelianisation functor: 
\[
\H_{3}(Z,\ab)\cong\frac{K\meet L\meet [X,X]}{[K,L,X]\join [K,L]\join [K\meet L,X]}
\]
where $K$, $L\normal X$ are the kernels induced by a double presentation of $Z$. This formula is valid in any semi-abelian category with enough projectives, whether the \emph{Smith is Huq} condition holds or not.

\subsection*{Structure of the text}
In Section~\ref{Section-Categorical-Context} we sketch the categorical context in which we shall be working. Section~\ref{Section-Co-Smash} is devoted to co-smash products and (higher-order) commutators. Section~\ref{Section-Semi-Direct-Products} discusses semi-direct products. In Section~\ref{Section-SH} we give a characterisation of the \emph{Smith is Huq} condition in terms of ternary commutators---Theorem~\ref{Theorem-SH}, the key result of the paper---and a formula for the Smith commutator of equivalence relations in terms of a binary and a ternary commutator of normal subobjects (Theorem~\ref{Theorem-Smith-Commutator}). We also find a characterisation of double central extensions (Proposition~\ref{Characterisation-Double-Central-Extensions}) which yields an explicit version of the Hopf formula for the third homology of an object (Theorem~\ref{Theorem-H3}). This leads to Section~\ref{Section-Crossed-Modules} where we give new characterisations of internal categories and internal crossed modules (Theorem~\ref{Theorem-Internal-Categories}, Theorem~\ref{Theorem-Characterisation-XMod}). In Section~\ref{Section-Beck} we characterise Beck modules in similar terms (Theorems~\ref{Theorem-Beck}, \ref{Theorem-Beck-2}).

\section{The categorical context}\label{Section-Categorical-Context}

\subsection{Pointed categories}
A \defn{pointed} category is a category with a \defn{zero object}, that is, an object which is at the same time initial and terminal.

\subsection{Regular and exact categories}
Recall that a \defn{regular epimorphism} is the coequaliser of some parallel pair of morphisms. A \defn{regular} category is a finitely complete category having a pullback-stable (regular epi, mono)-factorisation system. Given a morphism $f\colon{X\to Y}$, we write $\im(f)\colon {\Im(f)\to Y}$ for the mono-part in this \defn{image factorisation} of $f$. If $M\leq X$ is a subobject of~$X$ then we write $f(M)$ for the \defn{direct image} of $M$ along~$f$: it is the image of $f\comp m$, where~$m\colon{M\to X}$ is a monomorphism that represents the subobject.

Regular categories provide a natural context for working with relations. We denote the kernel relation (=~kernel pair) of a morphism $f\colon{X\to Y}$ (that is, the pullback of~$f$ along itself) by $(X\times_{Y}X,f_{1},f_{2})$. A regular category is said to be \defn{Barr exact} when every equivalence relation is \defn{effective}, which means that it is the kernel pair of some morphism~\cite{Barr}.

\subsection{Homological and semi-abelian categories}
A pointed category with pullbacks is called \defn{protomodular}~\cite{Bourn1991} if and only if the Split Short Five Lemma holds. If, moreover, the pointed category is regular, then protomodularity is equivalent to the (Regular) Short Five Lemma. This means that, given a commutative diagram
\begin{equation*}\label{Short-Five-Lemma}
\vcenter{\xymatrix{0 \ar[r] & A' \ar@{{ |>}->}[r] \ar[d]_-a & X' \ar@{-{>>}}[r]^-{p'} \ar[d]_-x & G' \ar[d]^-g\\
0 \ar[r] & A \ar@{{ |>}->}[r] & X \ar@{-{>>}}[r]_-p & G}}
\end{equation*}
with regular epimorphisms $p$, $p'$ and their kernels, if $a$ and $g$ are isomorphisms then also $x$ is an isomorphism. We usually denote the kernel of a morphism~$f$ by~$(\Ker(f), \ker(f))$, and say that a morphism is \defn{proper} when its image is a \defn{normal} monomorphism (= a kernel). If~$M\leq X$ is a normal subobject then we write~${M\normal X}$.

A \defn{homological} category~\cite{Borceux-Bourn} is a category which is pointed, regular and protomodular. This is a context where many of the basic diagram lemmas of homological algebra hold. In particular, here the notion of \defn{(short) exact sequence} has its full meaning: it is a regular (hence, in this context, normal) epimorphism with its kernel such as
\begin{equation*}\label{ses}
\xymatrix{0 \ar[r] & A \ar@{{ |>}->}[r]^-{a} & X \ar@{-{ >>}}[r]^-{p} & G \ar[r] & 0.}
\end{equation*}
This short exact sequence is called \defn{split} when there exists a \defn{section} (or \defn{splitting}) $s\colon G\to X$ of~$p$, that is, a morphism $s$ such that $p\comp s=1_{G}$.

Note that a split epimorphism $p\colon{X\to G}$ may have many splittings. When just one splitting $s$ is chosen, the pair $(p,s)$ is called a \defn{point (over $G$)}. The \defn{category of points $\Pt(\A)$} is defined by taking points in $\A$ (considered as diagrams $p\comp s=1_{G}$) as objects and natural transformations between points as morphisms. The points over a given object $G$ form the full subcategory $\Pt_{G}(\A)=(1_{G}\comma(\A\comma G))$ of $\Pt(\A)$.

In a finitely cocomplete homological category, any comparison morphism
\[
\left\langle\begin{smallmatrix} 1_{X} & 0 \\
0 & 1_{Y}
\end{smallmatrix}\right\rangle\colon X+Y\to X\times Y
\]
is a regular epimorphism. 

A \defn{Mal'tsev} category~\cite{Carboni-Lambek-Pedicchio} is by definition a finitely complete category in which every reflexive relation is necessarily an equivalence relation. It is well known that any finitely complete protomodular category satisfies this property~\cite{Bourn1996}. Furthermore, the Mal'tsev property is preserved by slicing. This is a context in which many of the basic constructions in commutator theory make sense. In a Mal'tsev category, internal categories are automatically internal groupoids.

A \defn{semi-abelian} category is a homological category which is exact and has binary sums~\cite{Janelidze-Marki-Tholen}. In a semi-abelian category, the direct image of a kernel along a regular epimorphism is still a kernel. In this context, the existence of binary sums entails finite cocompleteness.

We shall always work in a finitely cocomplete homological category~\cite{Borceux-Bourn} $\A$ unless explicitly mentioned otherwise. Some proofs need a semi-abelian~\cite{Janelidze-Marki-Tholen} environment; we always explain where and why.

\section{Co-smash products and commutators}\label{Section-Co-Smash}
We explain how co-smash products~\cite{Smash} in a finitely cocomplete homological category give rise to (higher-order) commutators. We start with some basic definitions and properties, we give some examples and recall how the binary commutator is a categorical version of the Higgins commutator~\cite{Actions, Higgins, MM-NC}.

\subsection{Notations for sums and products}
In a pointed category with finite sums, we denote the coproduct inclusion~${X_k\to X_1+\cdots +X_n}$ by $\iota_{X_k}$ or by $\iota_k$, and its canonical retraction $X_1+\cdots +X_n\to X_k$ by $\rho_{X_k}$ or $\rho_k$.

Dually, when working in a pointed category with finite products, we denote the product projection $X_1\times \cdots \times X_n\to X_k$ by $\pi_{X_{k}}$ or $\pi_k$ and its canonical section 
\[
\langle 0, \dots, 1_{X_{k}},\dots , 0\rangle\colon X_k\to X_1\times \cdots \times X_n
\]
by $\sigma_{X_{k}}$ or $\sigma_k$.

\begin{definition}\cite{Smash}\label{co-smash product}
In a finitely complete and cocomplete pointed category $\A$, we call \defn{co-smash product} or \defn{tensor product} $\bigtensor_{k=1}^{n}X_{k}=X_{1}\tensor \cdots \tensor X_{n}$ of objects $X_{1}$, \dots, $X_{n}$, $n\geq 2$ the kernel
\[
\xymatrix{{\displaystyle\bigtensor_{k=1}^{n}X_{k}} \ar@{{ |>}->}[r] & {\displaystyle\coprod_{k=1}^{n}X_{k}} \ar[r]^-{r} & {\displaystyle\prod_{k=1}^{n}\coprod_{j\neq k}X_{j}}} 
\]
where $r$ is the comparison morphism determined by
\[
\pi_{\coprod_{j\neq m}X_{j}}\comp r\comp \iota_{X_{l}}=
\begin{cases}
\iota_{X_{l}} & \text{if $l\neq m$}\\
0 & \text{if $l= m$}
\end{cases}
\]
for $l$, $m\in \{1,\dots, n\}$. The kernel morphism is usually denoted $\iota_{X_{1},\dots,X_{n}}$.
\end{definition}

We shall only consider co-smash products in situations where $\A$ is at least finitely cocomplete homological.

\begin{example}
Let us make explicit what happens in the lowest-dimensional cases, which are essential in the present article. If $n=2$ then we obtain a short exact sequence
\[
\xymatrix@=3em{0\ar[r]& X\tensor Y \ar@{{ |>}->}[r]^-{\iota_{X,Y}} & X+Y \ar@{-{ >>}}[r]^-{\left\langle\begin{smallmatrix} 1_{X} & 0 \\
0 & 1_{Y}
\end{smallmatrix}\right\rangle} & X\times Y \ar[r] & 0}
\]
for any $X$, $Y$ in $\A$. Note that the object $X\tensor Y$ is denoted $X\diamond Y$ in the article~\cite{MM-NC}. If $n=3$ and $X$, $Y$, $Z$ are objects of $\A$, then we consider the morphism
\[
\xymatrix@=8em{X+Y+Z \ar[r]^-{\left\langle\begin{smallmatrix} \iota_{X} & \iota_{X} & 0 \\
\iota_{Y} & 0 & \iota_{Y}\\
0 & \iota_{Z} & \iota_{Z}\end{smallmatrix}\right\rangle} & (X+Y)\times (X+Z) \times (Y+Z),}
\]
which need no longer be a regular epimorphism; the co-smash product $X\tensor Y\tensor Z$ is its kernel.
\end{example}

\begin{example}\label{Example-Binary-Groups}
In the case of groups we have 
\[
X\tensor Y=\langle [x,y]\mid x\in X,y\in Y\rangle
\]
where $[x,y]=xyx^{-1}y^{-1}$. So $X\tensor Y$ is a kind of ``formal commutator'' of $X$ and $Y$ as explained in~\cite{MM-NC} and~\cite{Actions}. This fact gives rise to the definition of commutators in terms of co-smash products.

Given groups $X$, $Y$ and $Z$ with chosen elements $x$, $y$ and $z$, respectively, the ternary commutator word
\[
xyx^{-1}y^{-1}zyxy^{-1}x^{-1}z^{-1}=[[x,y],z]
\]
is an example of an element of $X\tensor Y\tensor Z$. 
\end{example}

\begin{example}\cite{Smash}\label{Example-Algebras}
Let $\KK$ be a commutative ring with unit and consider the category $\CAlg_{\KK}$ of non-unitary commutative $\KK$-algebras. Here the co-smash product $X\tensor Y$ is the tensor product $X\tensor_{\KK}Y$ over $\KK$.
\end{example}

\begin{example}\label{Example-Varieties}
In a pointed variety of algebras $\V$, an element of a sum $X+Y+Z$ is of the shape
\[
t(x_{1},\dots,x_{k},y_{1},\dots,y_{l},z_{1},\dots,z_{m})
\]
where $t$ is a term of arity $k+l+m$ in the theory of $\V$ and $x_{1}$, \dots, $x_{k}\in X$, $y_{1}$, \dots, $y_{l}\in Y$ and $z_{1}$, \dots, $z_{m}\in Z$. This element belongs to the co-smash product $X\tensor Y\tensor Z$ if and only if
\[
\begin{cases}
t(x_{1},\dots,x_{k},y_{1},\dots,y_{l},0,\dots,0)=0 & \text{in $X+Y$},\\
t(x_{1},\dots,x_{k},0,\dots,0,z_{1},\dots,z_{m})=0 & \text{in $X+Z$},\\
t(0,\dots,0,y_{1},\dots,y_{l},z_{1},\dots,z_{m})=0 & \text{in $Y+Z$}.
\end{cases}
\]
Here $0$ denotes the unique constant of the theory of $\V$.
\end{example}

\begin{remark}
It follows easily from the definitions that the sequence~\eqref{tensor vs bemol} is exact.
\end{remark}

\begin{remark}\label{Remark-Ternary-Cross-Effect}
A tensor product $X\tensor Y\tensor Z$ in $\A$ may be obtained as a \emph{cross-effect} of the functor $X\tensor (-)\colon{\A\to\A}$, evaluated in the pair $(Y,Z)$. This yields an alternative (inductive) definition of co-smash products, which allows for different proof techniques and also a different intuition. 

The concept of cross-effect of a functor between abelian categories was introduced by Eilenberg and Mac\,Lane in the article~\cite{Eilenberg-MacLane}, where it was used in the study of polynomial functors. This definition does, however, not generalise to non-additive contexts. The approach due to Baues and Pirashvili~\cite{Baues-Pirashvili}, worked out in the case of groups, does extend easily to more general situations. Let us briefly recall from~\cite{Actions, Hartl-Vespa} how. 

Let $F\colon {\C} \to {\D}$ be a functor from a pointed category with finite sums ${\C}$ to a pointed finitely complete category ${\D}$. The \defn{$n$-th cross-effect of~$F$} is the functor
\[
\cre_n(F)\colon \C^{n} \to \D
\]
defined by $\cre_1(F)(X) = \Ker\bigl(F(0)\colon F(X) \to F(0)\bigr)$ and, for $n>1$,
\[
\cre_n(F)(X_1,\ldots,X_n)=\Ker(r_{F}),
\]
with
\[
r_{F}\colon F(X_1 + \dots + X_n) \to \prod_{k=1}^n F(X_1 + \dots + \widehat{X_k} + \dots + X_n)
\]
as in Definition~\ref{co-smash product}, modulo the $F$. The usual notation for cross-effects is
\[
F(X_1|\cdots|X_n)=\cre_n(F)(X_1,\ldots,X_n).
\]
When $F$ is the identity functor $1_{\A}$ of $\A$ we clearly find the co-smash product
\[
X_1\tensor\cdots\tensor X_n = 1_{\A}(X_1|\cdots|X_n).
\]

Coming back to the claim made at the beginning of this remark: writing down the relevant $3\times 3$ diagram, it is easy to check that indeed, $X\tensor Y\tensor Z$ may be obtained as the second cross-effect $(X|-)(Y|Z)$ of the functor $(X|-)=X\tensor (-)$ evaluated in the pair of objects $(Y,Z)$.
\end{remark}

This principle may be used in the proof of the following result.

\begin{proposition}\label{cr-reg-epi} 
Co-smash products preserve regular epimorphisms: for instance, so do the functors $X\tensor(-)\colon {\A\to \A}$ and $(-)\tensor Y\tensor Z\colon{\A\to \A}$.
\end{proposition}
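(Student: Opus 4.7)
The strategy is to establish the binary case first, namely that $X\tensor(-)$ preserves regular epimorphisms, and then derive all higher-arity cases by induction via the cross-effect presentation of Remark~\ref{Remark-Ternary-Cross-Effect}.

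For the binary case, I would fix a regular epimorphism $f\colon Y\to Y'$ and compare the defining short exact sequences for $X\tensor Y$ and $X\tensor Y'$: this yields a morphism of short exact sequences whose three vertical arrows are $1_X\tensor f$, $1_X+f$ and $1_X\times f$. The right-hand map $1_X\times f$ is a regular epi, being a pullback of $f$ along the product projection $X\times Y'\to Y'$ together with pullback-stability of regular epis in a regular category. The middle map $1_X+f$ is also a regular epi, since the functor $X+(-)$ preserves coequalisers: writing $f=\coeq(u,v)$ gives $1_X+f=\coeq(\iota_Y\comp u,\iota_Y\comp v)$. With two of the three verticals already regular epis, the standard short-five-lemma for regular epis in a homological category (a consequence of the denormalised $3\times 3$ lemma of Bourn, see Borceux--Bourn) forces $1_X\tensor f$ to be a regular epi as well.

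For the ternary case $(-)\tensor Y\tensor Z$, and by iteration for all higher arities, I would use the identification $X\tensor Y\tensor Z=(X\tensor -)(Y\mid Z)$ of Remark~\ref{Remark-Ternary-Cross-Effect}, which presents $X\tensor Y\tensor Z$ as the kernel of the canonical comparison $X\tensor(Y+Z)\to(X\tensor Y)\times(X\tensor Z)$. Given a regular epimorphism $f\colon X\to X'$, the binary case already provides that both $f\tensor(Y+Z)$ and $(f\tensor Y)\times(f\tensor Z)$ are regular epis. The obstacle here is that the horizontal comparison map need not itself be a regular epimorphism, so the preceding short-five-lemma argument does not apply verbatim. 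To get around this I would replace the comparison by its image factorisation, producing short exact sequences $0\to X\tensor Y\tensor Z\to X\tensor(Y+Z)\to I_X\to 0$ and its analogue for $X'$. The induced map $I_X\to I_{X'}$ is a regular epi, being the direct image of the regular epi $f\tensor(Y+Z)$, and a second application of the short-five-lemma for regular epis then yields that $f\tensor Y\tensor Z$ is a regular epi.

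The only real obstacle I foresee is confirming that the short-five-lemma for regular epimorphisms is available in the finitely cocomplete homological setting, as opposed to merely the semi-abelian one; this, however, is a standard consequence of protomodularity and the denormalised $3\times 3$ lemma. Once that is in place, the argument is essentially formal, and the inductive step generalises without change to co-smash products of any arity and in any variable.
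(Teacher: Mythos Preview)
Your overall strategy---binary case first, then induction via the cross-effect description of Remark~\ref{Remark-Ternary-Cross-Effect}---is exactly what the paper does. The gap lies in the execution of the binary case.

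The ``short-five-lemma for regular epis'' you invoke is false in homological categories as stated. Here is a counterexample already in abelian groups: take the morphism of short exact sequences
\[
\xymatrix{0 \ar[r] & 0 \ar[r] \ar[d] & \ZZ \ar[r]^-{1_{\ZZ}} \ar[d]^-{1_{\ZZ}} & \ZZ \ar[r] \ar[d] & 0\\
0 \ar[r] & \ZZ \ar[r]_-{1_{\ZZ}} & \ZZ \ar[r] & 0 \ar[r] & 0.}
\]
The middle and right verticals are regular epimorphisms, yet the left vertical $0\to\ZZ$ is not. So knowing only that $1_X+f$ and $1_X\times f$ are regular epis is not enough to force $1_X\tensor f$ to be one. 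The $3\times 3$ lemma does not save this: with only two exact rows and the two right-hand exact columns you cannot deduce exactness of the left column without first knowing the \emph{top} row is exact---that is, without knowing $\Ker(1_X+f)\to\Ker(1_X\times f)$ is a regular epimorphism. This extra condition does hold here (the inclusion $\iota_Y\comp\ker(f)$ provides a splitting), but it must be checked; it is precisely the missing ingredient.

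A clean repair is to proceed in two steps. First note that the square
\[
\xymatrix{X\flat Y \ar[r] \ar[d] & X+Y \ar[d]^-{1_X+f}\\
X\flat Y' \ar[r] & X+Y'}
\]
is a pullback (both rows are kernels of the compatible maps $\rho_X$ to $X$), so $X\flat f$ is a regular epimorphism by pullback-stability. Then use the \emph{compatibly split} sequence $0\to X\tensor Y\to X\flat Y\to Y\to 0$ from~\eqref{tensor vs bemol}: for split short exact sequences with compatible splittings, the argument you want does go through, because the splitting exhibits $\Ker(X\flat f)\to\Ker(f)$ as a split epimorphism and protomodularity (jointly strongly epic kernel--section pairs) then forces $1_X\tensor f$ to be regular epi. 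The same observation fixes your ternary step: the image factorisation you take is unnecessary once you use the split decomposition of $X\tensor(Y+Z)$ coming from the cross-effect description.
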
 
\begin{proof}
This was proved for binary co-smash products in~\cite{MM-NC} and extended to binary cross-effects (of functors which preserve regular epimorphisms and the zero object) in~\cite{Actions}. The case of ternary co-smash products now follows. For higher-degree co-smash products a proof is obtained via a similar argument and induction on the degree.
\end{proof}

\begin{remark}\label{Remark-Associativity}
As explained in~\cite{Smash} and~\cite{CCC, Actions}, tensor products need not be associative. Nevertheless there are always comparison morphisms
\[
(X_{1}\tensor \cdots \tensor X_{i})\tensor X_{i+1}\tensor\cdots\tensor X_{n} \to \bigtensor_{k=1}^{n}X_{k}
\]
induced by bracketing inside a co-smash product. When $n=3$, for instance, we find the dotted arrow between the kernels in the diagram
\[
\xymatrix{0 \ar[r] & (X\tensor Y)\tensor Z \ar@{.>}[d] \ar@{{ |>}->}[r]^-{\iota_{X\tensor Y,Z}} & (X\tensor Y) + Z \ar[r] \ar[d]^-{\iota_{X,Y}+ 1_{Z}} & (X\tensor Y)\times Z \ar[d]^{\iota_{X,Y}\times \langle \iota_{Z}, \iota_{Z}\rangle} \\
0 \ar[r] & X\tensor Y\tensor Z \ar@{{ |>}->}[r]_-{\iota_{X,Y,Z}} & X+Y+Z \ar[r] & (X+Y)\times (X+Z)\times (Y+Z).}
\]
\end{remark}

\subsection{Joins and the sum decomposition}
For subobjects
\[
\xymatrix{L\ar@{{ >}->}[r]^-l & X & M \ar@{{ >}->}[l]_-m}
\]
of an object $X$ in a finitely cocomplete homological category we write
\[
L \join M=\Im \bigl(\som{l}{m}\colon L+M\to X\bigr).
\]
If $L\meet M=0$ and $L$ is normal in $L \join M$ then we write $L \join M=L\rtimes M$. Note that this occurs precisely when there is a split short exact sequence
\[\label{rtimessequ}
\xymatrix@=3em{0 \ar[r] & L \ar@{{ |>}->}[r]^-{l} & L \join M \ar@{-{ >>}}@<.5ex>[r] & M \ar[r] \ar@{{ >}->}@<.5ex>[l]^-{m} & 0,}
\]
which justifies the semi-direct product notation (see Section~\ref{Section-Semi-Direct-Products}). Like for the sum, morphisms defined on $L\rtimes M$ are completely determined by the effect on~$L$ and~$M$, so we write them in a column.

The following result is crucial: it gives us a formula which expresses the commutator of a join as a join of commutators (Proposition~\ref{Proposition-Join}), which will in turn be used to decompose complicated commutators into less complicated ones.

\begin{lemma}\label{Tensor vs sum}\cite{Actions}
Suppose that $\A$ is a finitely cocomplete homological category and $X$, $Y$ and $Z$ are objects of $\A$. Then we have a decomposition
\[
X\tensor(Y+Z)=\bigl((X\tensor Y\tensor Z)\rtimes (X\tensor Y)\bigr)\rtimes (X\tensor Z).
\]
We write $\iota_{Y,Z}^{X\tensor (-)}\colon{X\tensor Y\tensor Z\to X\tensor(Y+Z)}$ for the canonical inclusion.
\end{lemma}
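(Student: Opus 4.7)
The strategy is to decompose $X\tensor(Y+Z)$ by means of two nested split short exact sequences, each produced from a canonical retraction of the sum $X+Y+Z$ that kills one summand.

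First I would exploit the split epimorphism $r_{Y}\colon X+Y+Z \to X+Z$ which kills the $Y$-component, together with the inclusion $\iota_{X,Z}\colon X+Z \hookrightarrow X+Y+Z$ induced by $Z \hookrightarrow Y+Z$. A short diagram chase with the defining kernel diagrams shows that these maps restrict to the co-smash products: $\iota_{X,Z}$ sends $X\tensor Z$ into $X\tensor(Y+Z)$, and $r_{Y}$ factors through a morphism $p_{Y}\colon X\tensor(Y+Z) \to X\tensor Z$. (To verify that $r_Y$ lands in $X\tensor Z$, one checks that the composites $X\tensor(Y+Z) \to X+Z \to X$ and $X\tensor(Y+Z) \to X+Z \to Z$ vanish, which follows because the former equals the composite $X\tensor(Y+Z) \to X \times (Y+Z) \to X$, and the latter factors through $(Y+Z) \to Z$ after killing $X$.) Since $\A$ is homological, the split epi $p_Y$ with its section yields a split short exact sequence
\[
0 \to K \to X\tensor(Y+Z) \xrightarrow{p_{Y}} X\tensor Z \to 0,
\]
and hence a semi-direct decomposition $X\tensor(Y+Z) \cong K \rtimes (X\tensor Z)$, where $K = \Ker(p_Y)$.

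Next I would identify $K$. An analogous argument, using the retraction $r_{Z}\colon X+Y+Z \to X+Y$ which kills $Z$, produces a morphism $p_{Z}\colon X\tensor(Y+Z) \to X\tensor Y$, together with its canonical section coming from $Y \hookrightarrow Y+Z$. This section lands inside $K$ (any element of $X\tensor Y \subseteq X+Y$ has no $Z$-component, so $r_Y$ sends it to the $X$-coordinate of its image under $X+Y \to X\times Y$, which is zero). Restricting $p_Z$ to $K$ then gives a split short exact sequence
\[
0 \to L \to K \xrightarrow{p_{Z}|_{K}} X\tensor Y \to 0,
\]
with $L = \Ker(p_Y) \meet \Ker(p_Z)$. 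By the cross-effect description recalled in Remark~\ref{Remark-Ternary-Cross-Effect}, $L$ is precisely the second cross-effect $(X\tensor -)(Y|Z)$, which coincides with $X\tensor Y\tensor Z$ by the $3\times 3$ lemma comparing the binary and ternary kernel diagrams. Stacking the two split short exact sequences yields the desired decomposition, and the inclusion $\iota_{Y,Z}^{X\tensor(-)}$ is the composite $X\tensor Y\tensor Z \hookrightarrow K \hookrightarrow X\tensor(Y+Z)$.

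The main obstacle is the bookkeeping in step one: verifying that the sum-retractions $r_Y$ and $r_Z$ really do restrict to morphisms between the relevant co-smash kernels, and that the resulting sequences are genuinely split short exact in the homological (not necessarily semi-abelian) setting. The first point is a straightforward application of the universal property of kernels; the second relies on the fact that in a homological category any split epi with its section sits in a split short exact sequence with its kernel. The identification $L \cong X\tensor Y\tensor Z$ is then a consequence of the two equivalent constructions of ternary co-smash products (as a kernel into a product of three binary sums, and as a binary cross-effect of the functor $X\tensor(-)$), which agree via protomodularity.
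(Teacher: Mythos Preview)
Your proposal is correct and follows essentially the same route as the paper's proof: two nested split short exact sequences obtained from the retractions $\rho_{Z}\colon Y+Z\to Z$ and $\rho_{Y}\colon Y+Z\to Y$, with the innermost kernel identified as $X\tensor Y\tensor Z$ via the cross-effect description of Remark~\ref{Remark-Ternary-Cross-Effect}. The only cosmetic difference is that the paper invokes functoriality of $X\tensor(-)$ directly (writing $1_{X}\tensor \rho_{Z}$, $1_{X}\tensor \iota_{Z}$, etc.) rather than restricting maps from the ambient sums by hand, which spares you the bookkeeping you flag as the main obstacle.
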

\begin{proof}
In other words, we have split short exact sequences
\[
\xymatrix@=3em{0 \ar[r] & W \ar@{{ |>}->}[r]^-{k} & X\tensor(Y+Z) \ar@{-{ >>}}@<.5ex>[r]^-{1_{X}\tensor \rho_{Z}} & X\tensor Z \ar[r] \ar@{{ >}->}@<.5ex>[l]^-{1_{X}\tensor \iota_{Z}} & 0}
\]
and
\[
\xymatrix@=4em{0 \ar[r] & (X\tensor Y\tensor Z) \ar@{{ |>}->}[r]^-{l} & W \ar@{-{ >>}}@<.5ex>[r]^-{(1_{X}\tensor \rho_{Y})\circ k} & X\tensor Y \ar[r] \ar@{{ >}->}@<.5ex>[l]^-{m} & 0.}
\]
There are only two things to be shown: that there is indeed a splitting $m$ for the morphism $(1_{X}\tensor \rho_{Y})\comp k$, and that $l$ is the kernel of this split epimorphism $(1_{X}\tensor \rho_{Y})\comp k$. First of all, the morphism $m$ is the factorisation of
\[
1_{X}\tensor \iota_{Y}\colon X\tensor Y\to X\tensor (Y+Z)
\]
over $k$. Secondly, $k\comp l$ is the intersection of $k$ and the kernel of
\[
1_{X}\tensor \rho_{Y}\colon{X\tensor (Y+Z)\to X\tensor Y},
\]
so it is $X\tensor Y\tensor Z$ as explained in Remark~\ref{Remark-Ternary-Cross-Effect}. Note that $\iota_{Y,Z}^{X\tensor (-)}=k\comp l$.
\end{proof}

\subsection{Co-smash products induce higher-order commutators}\label{Subsection-Commutators}
We obtain the following categorical notion of commutator (of arbitrary length $n$) which was first introduced in~\cite{MM-NC} for $n=2$ and in~\cite{Actions} for all $n\geq 2$. It is more thoroughly studied in~\cite{CCC}.

\begin{definition}\label{comdef} 
Let $X$ be an object of a finitely cocomplete homological category. The \defn{$n$-fold commutator morphism} of $X$ is the composite morphism
\[
c_n^X\colon\xymatrix@1@=3em{X\tensor \cdots\tensor X \ar@{{ |>}->}[r]^-{\iota_{X,\dots,X}} & X+\cdots+X \ar[r]^-{\nabla_{X}} & X.}
\]
When $x_i\colon X_i \to X$ for $1\leq i\leq n$ are subobjects of $X$, their \defn{commutator} is the subobject
\begin{align*}
[X_1,\ldots,X_n] &= \Im \bigl(\xymatrix@1@=4em{X_{1}\tensor \cdots\tensor X_{n} \ar[r]^-{x_1\tensor\cdots\tensor x_n} & X\tensor \cdots\tensor X \ar[r]^-{c_n^X} & X}\bigr)\\
&= \Im \bigl(\xymatrix@1@=4em{X_{1}\tensor \cdots\tensor X_{n} \ar[r]^-{\iota_{X_{1},\dots,X_{n}}} & X_{1}+\cdots+X_{n} \ar[r]^-{\left\langle\begin{smallmatrix} x_{1} \\ \vdots \\\\ x_{n}\end{smallmatrix}\right\rangle} & X}\bigr)
\end{align*}
of $X$.
\end{definition}

\begin{example}\label{Group-Commutators}
In~\cite{CCC} the $n$-fold commutator $[X,\ldots,X] =\Im(c_n^X)$ is determined for the categories of groups and of loops. In the former, this term coincides with the $n$-th term of the lower central series; for loops, however, this is not true: it here coincides with the $n$-th term of the commutator-associator-filtration recently introduced by Mostovoy \cite{Mostovoy2, Mostovoy1} who realised that from several viewpoints the latter should be regarded as the ``right'' notion of lower central series for loops. In particular, the lower central series defined in terms of co-smash products as above does not coincide with the concept considered in~\cite{Huq}.
\end{example}

The binary commutator $[K,L]$ is also studied in~\cite{MM-NC}, where it is called the \defn{Higgins commutator}. It is an conceptual generalisation of the commutator which was introduced by Higgins in a varietal context~\cite{Higgins}. This definition should also be compared with the Huq commutator, as indeed in general, the two are different---but not too different.

\subsection{The Huq commutator}
By definition, a coterminal pair
\begin{equation*}\label{Cospan}
\vcenter{\xymatrix{K \ar[r]^-{k} & X & L \ar[l]_-{l}}}
\end{equation*}
of morphisms in a homological category \defn{Huq-commutes}~\cite{Bourn2002, BG, Huq} if and only if there is a (necessarily unique) morphism $\varphi$ such that the diagram
\[
\xymatrix@!0@=3em{ & K \ar[ld]_{\langle 1_{K},0\rangle} \ar[rd]^-{k} \\
K\times L \ar@{.>}[rr]|-{\varphi} && X\\
& L \ar[lu]^{\langle 0,1_{L}\rangle} \ar[ru]_-{l}}
\]
is commutative. We shall mainly be interested in the case where $k$ and $l$ are normal monomorphisms (= kernels). The \defn{Huq commutator
\[
[k,l]^{\Huq}\colon {[K,L]^{\Huq}\to X}
\]
of~$k$ and~$l$} is the smallest normal subobject of $X$ that should be divided out to make $k$ and $l$ commute---so that they do commute if and only if~${[K,L]^{\Huq}=0}$. This object may be obtained through the colimit $Q$ of the outer square above, as the kernel of the (regular epi)morphism ${X\to Q}$. In a homological category, an object~$X$ is abelian if and only if~$[X,X]^{\Huq}=0$.

\begin{remark}\label{Remark-Huq-Join}
In contrast with the Huq commutator, the Higgins commutator $[K,L]$ need not be normal in $X$, not even when both $K$ and $L$ are normal subobjects of $X$. In fact, the Huq commutator $[K,L]^{\Huq}$ of $K$, $L\normal X$ is the normal closure of~$[K,L]$, so that~${[[K,L],X] \join[K,L]=[K,L]^{\Huq}}$ by the following proposition, which is not explicitly needed further on:
\end{remark}

\begin{proposition}\cite{Actions}
If $K$, $L\leq X$ in a semi-abelian category then the normal closure of $K$ in the join $K \join L$ is $[K,L] \join K$.\noproof
\end{proposition}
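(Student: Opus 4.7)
My plan is to reduce the problem to computing the normal closure of $K$ in the coproduct $K + L$, which is easy to identify explicitly, and then to transport the result along the canonical regular epimorphism $i\colon K + L \twoheadrightarrow K \join L$. The key enabling principle is that in a semi-abelian category, forming the normal closure commutes with direct images along regular epimorphisms: for any subobject $K' \leq K + L$ one has $i\bigl(N_{K+L}(K')\bigr) = N_{K \join L}\bigl(i(K')\bigr)$, where $N_{Y}(-)$ denotes the normal closure in $Y$. This comes from a standard Galois-type argument combining two ingredients: inverse images of normal subobjects are normal in any regular category, and direct images of normal subobjects along regular epimorphisms are normal in any semi-abelian category (as recalled in Section~\ref{Section-Categorical-Context}).

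Next I would identify the normal closure of $K$ in $K + L$ as $L \flat K = \Ker(\rho_{L})$. The retraction $\rho_{L}\colon K + L \to L$ is the cokernel of $\iota_{K}$: for any morphism $g\colon K + L \to Z$ with $g\comp \iota_{K} = 0$, we have $g = (g\comp \iota_{L})\comp \rho_{L}$, since the two sides agree on $\iota_{K}$ and on $\iota_{L}$. Hence the normal closure of $\iota_{K}(K)$ in $K + L$ is exactly $\Ker(\rho_{L}) = L \flat K$. The transport principle then gives that the normal closure of $K$ in $K \join L$ is $i(L \flat K)$.

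To conclude, I would evaluate this image using the split short exact sequence $0 \to K \tensor L \to L \flat K \to K \to 0$ provided by diagram~\eqref{tensor vs bemol} (with the roles of $B$ and $X$ played here by $L$ and $K$). By the Split Short Five Lemma applied to this split extension, the canonical morphism $(K \tensor L) + K \to L \flat K$ induced by the kernel inclusion and the splitting $\iota_{K}\colon K \to L \flat K$ is a regular epimorphism; hence $L \flat K = (K \tensor L) \join \iota_{K}(K)$ as subobjects of $K + L$. Since the regular epimorphism $i$ preserves joins of subobjects, this yields
\[
i(L \flat K) = i(K \tensor L) \join i(\iota_{K}(K)) = [K, L] \join K,
\]
by the very definition of the Higgins commutator $[K, L]$ as the image of $K \tensor L$ in $K \join L$.

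The most delicate point is the transport principle of the first paragraph, which crucially requires the semi-abelian hypothesis to guarantee that direct images of normal subobjects along regular epimorphisms are again normal --- this is why the proposition is formulated in the semi-abelian rather than the finitely cocomplete homological setting. Everything else is a straightforward computation with the split short exact sequence for $\flat$-objects.
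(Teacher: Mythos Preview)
Your argument is correct. The paper does not supply its own proof of this proposition: it is stated with a \texttt{\textbackslash noproof} marker and attributed to~\cite{Actions}, so there is nothing to compare against directly. Your route---computing the normal closure of $K$ in $K+L$ as $\Ker(\rho_{L})=L\flat K$ via the cokernel description, decomposing $L\flat K$ as the join of $K\tensor L$ and $\iota_{K}(K)$ using the split short exact sequence~\eqref{tensor vs bemol}, and then pushing everything forward along the regular epimorphism $K+L\twoheadrightarrow K\join L$---is a clean and self-contained proof.

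One small terminological correction: the fact that $(K\tensor L)+K\to L\flat K$ is a regular epimorphism is not really the Split Short Five Lemma. It follows from protomodularity, which guarantees that in any split short exact sequence the kernel and the section are jointly strongly epimorphic; in a regular category this makes the induced morphism from their coproduct a regular epimorphism. The conclusion is the same, but the invocation should be adjusted.
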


\begin{remark}\label{abobj}
In a Mal'tsev category~$\A$, an object $A$ is said to be \defn{abelian} if and only if it carries a (necessarily unique) internal Mal'tsev operation: that is, a morphism $g\colon{A\times A\times A\to A}$ for which $g(x,x,z)=z$ and ${g(x,z,z)=x}$~\cite{Johnstone:Maltsev, Pedicchio}. As soon as $\A$ is moreover pointed, such an internal Mal'tsev operation is the same thing as an internal abelian group structure. However, in general, the two concepts are different. To avoid confusion, we denote the full subcategory of $\A$ determined by the abelian objects $\Mal(\A)$, and we write $\Ab(\A)$ for the category of internal abelian groups in $\A$.

For instance, an abelian object in the category of groups is an abelian group, and an abelian associative algebra over a field is a vector space (equipped with a trivial multiplication).

Note that an object $X$ in a finitely cocomplete homological category is abelian if and only if its commutator morphism $c_2^X$ is trivial: indeed, $[X,X]=0$ precisely when ${[X,X]^{\Huq}=0}$.
\end{remark}

\begin{remark}
The higher-order commutators are generally not built up out of iterated binary commutators (Remark~\ref{Remark-Associativity}, Example~\ref{Group-Commutators}, Example~\ref{Example-Loops}).
\end{remark}

The following basic properties will be useful throughout the text.

\begin{proposition}\cite{CCC}\label{Proposition-Commutator-Rules}
Let $X_{1}$, \dots, $X_{n}$ be subobjects of an object $X$ in $\A$.
\begin{enumerate}
\item[(o)] Commutators are reduced: if $X_{i}=0$ for some $i$ then $[X_{1},\dots,X_{n}]=0$. 
\item Commutators are symmetric: for any permutation ${\sigma\in \Sigma_{n}}$, 
\[
[X_{1},\dots,X_{n}]\cong [X_{\sigma^{-1}(1)},\dots,X_{\sigma^{-1}(n)}].
\]
\item Commutators are preserved by direct images: for $f\colon {X\to Y}$ regular epi,
\[
f[X_{1},\dots,X_{i},\dots, X_{n}]=[f(X_{1}),\dots,f(X_{i}),\dots, f(X_{n})].
\]
\item Commutators are monotone: if $M\leq X_{i}$ then
\[
[X_{1},\dots, X_{i-1},M,X_{i+1},\dots, X_{n}]\leq[X_{1},\dots, X_{i-1},X_{i},X_{i+1},\dots, X_{n}].
\]
\item Removing brackets enlarges the object:
\[
[[X_{1},\dots,X_{i}],X_{i+1},\dots, X_{n}]\leq [X_{1},\dots,X_{i},X_{i+1},\dots, X_{n}].
\]
\item Removing duplicates enlarges the object: if $X_{i}=X_{i+1}$ then
\[
[X_{1},\dots,X_{i},X_{i+1},X_{i+2},\dots, X_{n}]\leq [X_{1},\dots,X_{i},X_{i+2}\dots, X_{n}].
\]
\item When $\A$ is semi-abelian, if $X_1\vee\cdots \vee X_n = X$ then $[X_1,\ldots,X_n]\normal X$.
\end{enumerate}
\end{proposition}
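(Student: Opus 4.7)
The plan is to treat the six properties in three groups, according to the tools each one requires. Properties (o) and (i) are purely formal consequences of Definition~\ref{co-smash product}. For (o), if some $X_i=0$, then in the comparison morphism $r$ of that definition the projection onto the factor $\coprod_{j\neq i}X_j$ is (up to canonical identifications) the identity on $\coprod_k X_k$; hence $r$ is a split monomorphism and its kernel $\bigtensor_{k}X_k$ vanishes, whence the commutator is trivially zero. For (i), the description of $\bigtensor_{k}X_k$ as kernel of $r$ is manifestly symmetric in the factors, so any permutation $\sigma$ of the $X_k$ induces an isomorphism on the co-smash product; composing with the correspondingly permuted column morphism into $X$ gives the same subobject of $X$.

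Properties (ii) and (iii) are consequences of the functoriality of the co-smash product together with the $n$-ary version of Proposition~\ref{cr-reg-epi}, which is obtained by the induction argument sketched in the proof of that proposition. For (ii), the regular epimorphism $f\colon X\to Y$ restricts to regular epimorphisms $X_i\to f(X_i)$, which then induce a regular epimorphism $X_1\tensor\cdots\tensor X_n\to f(X_1)\tensor\cdots\tensor f(X_n)$; combining this with the fact that direct images of composites coincide with composite direct images yields the stated equality. For (iii), the inclusion $M\leq X_i$ induces, by functoriality, a morphism $X_1\tensor\cdots\tensor M\tensor\cdots\tensor X_n\to X_1\tensor\cdots\tensor X_i\tensor\cdots\tensor X_n$; composing with the column morphism to $X$ and taking images gives the required inclusion.

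Properties (iv) and (v) exploit the comparison morphisms between co-smash products and the collapsing behaviour of cross-effects under folding. For (iv), the comparison morphism of Remark~\ref{Remark-Associativity}, iterated, yields a canonical map $(X_1\tensor\cdots\tensor X_i)\tensor X_{i+1}\tensor\cdots\tensor X_n\to X_1\tensor\cdots\tensor X_n$; this fits into a square with the regular epimorphism $(X_1\tensor\cdots\tensor X_i)\tensor X_{i+1}\tensor\cdots\tensor X_n\to [X_1,\ldots,X_i]\tensor X_{i+1}\tensor\cdots\tensor X_n$ (given by functoriality and Proposition~\ref{cr-reg-epi}) and the column morphism $[X_1,\ldots,X_i]\tensor X_{i+1}\tensor\cdots\tensor X_n\to X$, from which the containment of images follows. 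For (v), suppose $X_i=X_{i+1}=M$ with the same embedding $m\colon M\to X$. The codiagonal $\nabla_{M}\colon M+M\to M$ extends to a regular epimorphism between the ambient sums, and by Proposition~\ref{cr-reg-epi} it restricts to a regular epimorphism between the corresponding co-smash products, while the two column morphisms into $X$ agree with this restriction. Hence the image of the larger commutator in $X$ is contained in that of the smaller one.

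The main obstacle is (vi), which genuinely uses the semi-abelian context. Under the hypothesis $X_1\join\cdots\join X_n=X$, the strategy is to show that $[X_1,\ldots,X_n]$ is stable under the conjugation action of $X$ on itself. Since the $X_j$ jointly generate $X$, and thanks to the symmetry (i), it suffices to establish stability under conjugation by $X_1$. To do this, I would use the join-decomposition of Lemma~\ref{Tensor vs sum} together with (ii) to express the conjugate of $[X_1,\ldots,X_n]$ by $X_1$ as a join of commutators each of the form $[X_{j_1},\ldots,X_{j_m}]$ with possible repetitions in the first slot; property (v) then collapses these repetitions and exhibits every piece as lying inside $[X_1,\ldots,X_n]$. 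The essential ingredient, valid only in the exact setting, is that the direct image of a kernel along a regular epimorphism is itself a kernel, which is what allows the various images appearing in the argument to be recognised as honest normal subobjects. The real difficulty here is the bookkeeping across general arities; the binary case is treated in~\cite{MM-NC}, and the higher-arity case follows the same pattern with more elaborate indexing.
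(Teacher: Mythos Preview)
Your treatment of (o)--(v) is essentially the paper's argument; the only quibble is in (v), where you invoke Proposition~\ref{cr-reg-epi} to claim that the codiagonal-induced map between co-smash products is a regular epimorphism. That proposition does not say this: it concerns functoriality in a single slot, whereas here you are collapsing two slots into one. What you actually need is merely that such a map \emph{exists} and commutes with the column morphisms into $X$, which is exactly the content of the folding operations in Notation~\ref{Folding Operations}; the regular-epi claim is both unjustified and unnecessary for the inclusion of images.

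For (vi) you are working much too hard, and in a direction that creates forward references (your conjugation-stability strategy amounts to invoking Proposition~\ref{Proposition-Normality}, which appears later). The paper's proof is a single line: the hypothesis $X_1\vee\cdots\vee X_n = X$ says precisely that the column morphism $\langle x_1,\dots,x_n\rangle\colon X_1+\cdots+X_n\to X$ is a regular epimorphism, and $\iota_{X_1,\dots,X_n}$ is by construction a kernel in $X_1+\cdots+X_n$. The commutator $[X_1,\ldots,X_n]$ is then the direct image of this kernel along that regular epimorphism, and in a semi-abelian category such direct images are again kernels. You explicitly name this fact as ``the essential ingredient'' but then use it only to justify intermediate steps in an elaborate detour, rather than applying it directly to the object you are trying to prove normal.
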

\begin{proof}
(o) If $X_{i}=0$ then the morphism $\pi_{\coprod_{j\neq i}X_{j}}\comp r$ in Definition~\ref{co-smash product} becomes an isomorphism. Hence $X_{1}\tensor \cdots\tensor X_{n}$ is zero. For (i) it suffices to note that the definition of $X_{1}\tensor \cdots\tensor X_{n}$ is symmetric in the objects $X_{1}$, \dots, $X_{n}$. Statement~(ii) is a consequence of Proposition~\ref{cr-reg-epi}. (iii) holds because the inclusion of $M$ into $X_{i}$ gives a morphism
\[
X_{1}\tensor\cdots\tensor X_{i-1}\tensor M\tensor X_{i+1}\tensor\cdots\tensor X_{n}\to X_{1}\tensor\cdots\tensor X_{i-1}\tensor X_{i}\tensor X_{i+1}\tensor \cdots\tensor X_{n}
\]
which induces the needed inclusion of commutators. The inclusion in (iv) is induced by the bracketing comparison
\[
(X_{1}\tensor\cdots\tensor X_{i})\tensor X_{i+1}\tensor\cdots\tensor X_{n}\to X_{1}\tensor\cdots\tensor X_{i}\tensor X_{i+1}\tensor \cdots\tensor X_{n}
\]
from Remark~\ref{Remark-Associativity} and the fact that tensor products preserve regular epimorphisms, Proposition~\ref{cr-reg-epi}. The inclusion in (v) is obtained through (possibly higher-order versions of) the folding operations introduced in Notation~\ref{Folding Operations} below. For the proof of (vi) it suffices to recall that in a semi-abelian category, the direct image of a kernel along a regular epimorphism is still a kernel.
\end{proof}

\begin{proposition}\label{Proposition-Join}
Commutators satisfy a distribution rule with respect to joins:
\[
[X_1,\ldots,X_n,A_1\vee\cdots \vee A_m] = \bigvee_{\stackrel{\scriptstyle 1\leq k\leq m}{1\leq i_1<\ldots<i_k\leq m}}
[X_1,\ldots,X_n,A_{i_1},\ldots , A_{i_m}].
\]
\end{proposition}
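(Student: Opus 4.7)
The plan is to reduce the statement to the binary case $m=2$ by induction on $m$, and then to handle this base case by combining Lemma~\ref{Tensor vs sum} with Proposition~\ref{cr-reg-epi} and the fact that (regular) images preserve joins of subobjects.

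Once $m=2$ is known, the induction step goes by writing $A_1\vee\cdots\vee A_{m+1}=B\vee A_{m+1}$ with $B=A_1\vee\cdots\vee A_m$. The binary formula expands $[X_1,\ldots,X_n,B\vee A_{m+1}]$ as
\[
[X_1,\ldots,X_n,B]\vee[X_1,\ldots,X_n,A_{m+1}]\vee[X_1,\ldots,X_n,B,A_{m+1}].
\]
Applying the induction hypothesis directly to $[X_1,\ldots,X_n,B]$, and, after invoking symmetry of the commutator (Proposition~\ref{Proposition-Commutator-Rules}(i)) to treat $A_{m+1}$ as part of the fixed prefix, also to $[X_1,\ldots,X_n,A_{m+1},B]$, precisely produces the join over all non-empty subsets of $\{1,\ldots,m+1\}$.

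For the base case $m=2$, set $A=A_1$, $B=A_2$ and factor $\som{a}{b}\colon A+B\to X$ as $A+B\twoheadrightarrow A\vee B\hookrightarrow X$. By Proposition~\ref{cr-reg-epi}, the functor $X_1\tensor\cdots\tensor X_n\tensor(-)$ sends the first arrow to a regular epimorphism, so $[X_1,\ldots,X_n,A\vee B]$ coincides with the image in $X$ of the composite
\[
X_1\tensor\cdots\tensor X_n\tensor(A+B)\hookrightarrow X_1+\cdots+X_n+(A+B)\to X.
\]
It then suffices to show that $X_1\tensor\cdots\tensor X_n\tensor(A+B)$ is, inside itself, the join of the natural images of $X_1\tensor\cdots\tensor X_n\tensor A$, $X_1\tensor\cdots\tensor X_n\tensor B$ and $X_1\tensor\cdots\tensor X_n\tensor A\tensor B$. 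Since images of regular morphisms preserve joins of subobjects, the image in $X$ is then the join of the three commutators $[X_1,\ldots,X_n,A]$, $[X_1,\ldots,X_n,B]$ and $[X_1,\ldots,X_n,A,B]$ (Definition~\ref{comdef}), as required.

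The main obstacle is establishing this higher-order analogue of Lemma~\ref{Tensor vs sum}: because tensor products are not associative (Remark~\ref{Remark-Associativity}), one cannot simply bracket $X_1,\ldots,X_n$ together and quote the lemma verbatim. The natural remedy is to work at the level of cross-effects, as in Remark~\ref{Remark-Ternary-Cross-Effect}: the functor $T=X_1\tensor\cdots\tensor X_n\tensor(-)$ is a partial evaluation of $\cre_{n+1}(1_{\A})$, which still preserves regular epimorphisms, and the same $3\times 3$ kernel-diagram argument that underlies Lemma~\ref{Tensor vs sum} shows that $T(A+B)$ decomposes as an iterated semi-direct product of $T(A)$, $T(B)$, and the second cross-effect $T(A\mid B)$, with this last factor being canonically isomorphic to $X_1\tensor\cdots\tensor X_n\tensor A\tensor B$; the decomposition as iterated semi-direct products gives, in particular, the required join decomposition inside $T(A+B)$.
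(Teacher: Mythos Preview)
Your proposal is correct and follows essentially the same strategy as the paper: decompose the tensor with a sum via the semi-direct product formula of Lemma~\ref{Tensor vs sum}, use Proposition~\ref{cr-reg-epi} to replace the join by the sum, and take images. In fact you are more explicit than the paper, which only treats the case $n=1$, $m=2$ and refers to~\cite{CCC} for the general statement; your induction on~$m$ and your remark that the higher-$n$ analogue of Lemma~\ref{Tensor vs sum} must be obtained via cross-effects (as in Remark~\ref{Remark-Ternary-Cross-Effect}) are exactly the ingredients the paper has in mind when it says the general proof is ``similar''.
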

\begin{proof}
We show that for all $X$ in $\A$ and $A$, $B\leq X$ the equality
\[
[X,A\vee B]=[X,A,B]\vee [X,A]\vee [X,B]
\]
holds, and refer to~\cite{CCC} for the general case, of which the proof is similar. Lemma~\ref{Tensor vs sum} tells us that 
\[
X\tensor(A+B)=(X\tensor A\tensor B)\rtimes (X\tensor A)\bigr)\rtimes (X\tensor B).
\]
Using the equality $A\vee B=\Im(\som{a}{b}\colon A+B\to X)$ and the fact that $X\tensor(-)$ preserves regular epimorphisms (Proposition~\ref{cr-reg-epi}), we may embed the co-smash products into the respective sums and take images of the composites with the inclusions $1_{X}$, $a$ and~$b$ into $X$ to obtain the needed join decomposition of the commutator.
\end{proof}

\begin{notation}\label{Folding Operations}
Given objects $X$ and $Y$ in $\A$, we consider the \defn{folding operations}
\[
S_{1,2}^{X,Y}\colon {X\tensor Y\tensor Y\to X\tensor Y}\qquad\text{and}\qquad S_{2,1}^{X,Y}\colon {X\tensor X\tensor Y\to X\tensor Y}.
\]
The one on the left is induced by
\[
\xymatrix@=3em{X\tensor Y\tensor Y \ar[r]^-{\iota_{X,Y,Y}} & X+Y+Y \ar[r]^-{1_{X}+\nabla_{Y}} & X+Y \ar[r]^-{\left\langle\begin{smallmatrix} 1_{X} & 0 \\
0 & 1_{Y}
\end{smallmatrix}\right\rangle} & X\times Y}
\]
being trivial, as indeed the diagram
\[
\xymatrix{X+Y+Y \ar[d]_-{1_{X}+\nabla_{Y}} \ar[r]^-{r} & (X+Y)^{2}\times (Y+Y) \ar[d]^-{(\rho_{X}\circ\pi_{1})\times \nabla_{Y}}\\
X+Y \ar[r]_-{\left\langle\begin{smallmatrix} 1_{X} & 0 \\
0 & 1_{Y}
\end{smallmatrix}\right\rangle} & X\times Y}
\]
commutes, and the one on the right is given by the analogous argument. 
\end{notation}

\begin{proposition}\label{Split-Exact}
Suppose that $\A$ is finitely cocomplete homological. Let $X$ be an object of $\A$. Then any split right-exact sequence 
\[
\xymatrix{Y \ar[r]^-{\del } & V \ar@{-{ >>}}@<.5ex>[r]^-{p} & Z \ar@{{ >}->}@<.5ex>[l]^-s\ar[r] & 0}
\]
gives rise to a split exact sequence
\begin{equation}\label{Sequence-Rightex-Split-cross}
{\xymatrix@=3em{(X\tensor Y\tensor Z) \rtimes (X\tensor Y) \ar[rr]^-{\bigsom{S_{1,2}^{X,V}\circ (1_{X}\tensor \del\tensor s)}{1_{X}\tensor \del }} && X\tensor V \ar@{-{ >>}}@<.5ex>[r]^-{1_{X}\tensor p} & X\tensor Z \ar[r] \ar@{{ >}->}@<.5ex>[l]^-{1_X\tensor s}& 0.}}
\end{equation}
\end{proposition}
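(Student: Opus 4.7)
The strategy is to apply the functor $X\tensor(-)$ to the morphism $\bigsom{\del}{s}\colon Y+Z \to V$---a regular epimorphism since right-exactness forces $\Im(\del)=\Ker(p)$, whence $V=\Ker(p)\join\Im(s)$ is the image of $\bigsom{\del}{s}$---and then to exploit the decomposition of $X\tensor(Y+Z)$ given by Lemma~\ref{Tensor vs sum}. The identity $p\comp\bigsom{\del}{s}=\rho_{Z}$, which follows from $p\comp\del=0$ and $p\comp s=1_{Z}$, will glue the two halves together. The right-hand part of the sequence is immediate: $(1_{X}\tensor p)\comp(1_{X}\tensor s)=1_{X\tensor Z}$, and $1_{X}\tensor p$ is a regular epimorphism by Proposition~\ref{cr-reg-epi}.

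For the substantive claim---that $\Im(\Phi)=\Ker(1_{X}\tensor p)$, where $\Phi := \bigsom{S_{1,2}^{X,V}\comp(1_{X}\tensor\del\tensor s)}{1_{X}\tensor\del}$---I use that by Lemma~\ref{Tensor vs sum} the kernel of $1_{X}\tensor\rho_{Z}$ is $W := (X\tensor Y\tensor Z)\rtimes(X\tensor Y)$. The identity $p\comp\bigsom{\del}{s}=\rho_{Z}$ yields the commutative square of regular epimorphisms
\[
\xymatrix{
X\tensor(Y+Z) \ar@{-{>>}}[d]_-{1_{X}\tensor \bigsom{\del}{s}} \ar@{-{>>}}[r]^-{1_{X}\tensor \rho_{Z}} & X\tensor Z \ar@{=}[d] \\
X\tensor V \ar@{-{>>}}[r]_-{1_{X}\tensor p} & X\tensor Z,
}
\]
from which a standard regular-category pullback argument shows that the induced morphism $W\to\Ker(1_{X}\tensor p)$ is itself a regular epi. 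Hence $\Ker(1_{X}\tensor p)$ is the direct image of $W$ along $1_{X}\tensor\bigsom{\del}{s}$, and it suffices to verify that the composite $W\hookrightarrow X\tensor(Y+Z)\to X\tensor V$ coincides with $\Phi$.

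This verification proceeds on each of the two summands of $W$, using the description of the inclusions from the proof of Lemma~\ref{Tensor vs sum}. On the $X\tensor Y$-summand (mapping to $X\tensor(Y+Z)$ as $1_{X}\tensor\iota_{Y}$) the composite collapses to $1_{X}\tensor(\bigsom{\del}{s}\comp\iota_{Y})=1_{X}\tensor\del$, matching the second entry of $\Phi$. On the $X\tensor Y\tensor Z$-summand (mapping to $X\tensor(Y+Z)$ as $\iota_{Y,Z}^{X\tensor(-)}$) I factor $\bigsom{\del}{s}=\nabla_{V}\comp(\del+s)$ and apply the naturality of the second cross-effect of $X\tensor(-)$ in its two arguments (Remark~\ref{Remark-Ternary-Cross-Effect}) to obtain
\[
(1_{X}\tensor\bigsom{\del}{s})\comp\iota_{Y,Z}^{X\tensor(-)} = (1_{X}\tensor\nabla_{V})\comp\iota_{V,V}^{X\tensor(-)}\comp(1_{X}\tensor\del\tensor s),
\]
after which the first two factors combine to $S_{1,2}^{X,V}$, matching the first entry of $\Phi$.

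The main obstacle is this final identification, $(1_{X}\tensor\nabla_{V})\comp\iota_{V,V}^{X\tensor(-)}=S_{1,2}^{X,V}$. Although essentially automatic once all constructions are unpacked, it requires postcomposing with the monomorphism $\iota_{X,V}\colon X\tensor V\hookrightarrow X+V$ and invoking the equality $\iota_{X,V+V}\comp\iota_{V,V}^{X\tensor(-)}=\iota_{X,V,V}$---two realisations of the same ternary cross-effect inclusion into $X+V+V$---to reduce the claim to the very defining property of the folding $S_{1,2}^{X,V}$ from Notation~\ref{Folding Operations}.
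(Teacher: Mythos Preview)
Your proof is correct and follows essentially the same route as the paper's: both apply $X\tensor(-)$ to the regular epimorphism $\bigsom{\del}{s}\colon Y+Z\to V$, invoke the decomposition of $X\tensor(Y+Z)$ from Lemma~\ref{Tensor vs sum}, and identify the image of $W=(X\tensor Y\tensor Z)\rtimes(X\tensor Y)$ in $X\tensor V$ with $\Ker(1_{X}\tensor p)$. The paper is terser---it asserts the equality $(1_{X}\tensor\bigsom{\del}{s})\comp\bigsom{\iota^{X\tensor(-)}_{Y,Z}}{1_{X}\tensor\iota_{Y}}=\bigsom{S_{1,2}^{X,V}\comp(1_{X}\tensor\del\tensor s)}{1_{X}\tensor\del}$ without further comment---whereas you carry out this identification explicitly on each summand and unpack why $(1_{X}\tensor\nabla_{V})\comp\iota_{V,V}^{X\tensor(-)}=S_{1,2}^{X,V}$, which is a helpful addition.
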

\begin{proof}
Consider the diagram of solid arrows
\[
\xymatrix@=3em{
&(X\tensor Y\tensor Z) \rtimes (X\tensor Y) \ar[r]^-{\bigsom{\iota^{X\tensor (-)}_{Y,Z}}{1_{X}\tensor \iota_Y}} \ar@{.{ >>}}[d] & X\tensor (Y+Z) \ar@{-{ >>}}[r]^-{1_{X}\tensor \rho_Z} \ar@{-{ >>}}[d]_-{1_{X}\tensor \bigsom{\del}{s}} & X\tensor Z \ar@{=}[d]\ar[r] & 0\\
0 \ar[r] & \Ker(1_{X}\tensor p) \ar@{{ |>}->}[r]& X\tensor V \ar@{-{ >>}}[r]_-{1_{X}\tensor p} & X\tensor Z \ar[r] &0.}
\]
Its top row is exact by Lemma~\ref{Tensor vs sum}; in particular, the top left morphism is proper: in the notation of Lemma~\ref{Tensor vs sum}, its image is $W$. Moreover, $1_{X}\tensor \bigsom{\del}{s}$ is a regular epimorphism by the protomodularity of $\A$ and by Proposition~\ref{cr-reg-epi}, which says that~$X\tensor (-)$ preserves regular epimorphisms. By the uniqueness of image factorisations, the kernel of $1_{X}\tensor p$ is equal to
\begin{align*}
\im\Bigl((1_{X}\tensor \bigsom{\del}{s}) \comp \bigsom{\iota^{X\tensor (-)}_{Y,Z}}{1_{X}\tensor \iota_Y}\Bigr)
&= \im\Bigl(\bigsom{S_{1,2}^{X,V}\circ (1_{X}\tensor \del\tensor s)}{1_{X}\tensor \del}\Bigr).
\end{align*}
Hence the sequence~\eqref{Sequence-Rightex-Split-cross} is exact.
\end{proof}

\section{Semi-direct products}\label{Section-Semi-Direct-Products}

Internal actions were first introduced in~\cite{Bourn-Janelidze:Semidirect}, then studied in detail in~\cite{BJK}. As explained in the introduction, following~\cite{Actions}, we shall manipulate internal actions through co-smash products. More precisely, given a split epimorphism $p$ with splitting $s$ corresponding to an algebra $\xi\colon{B\flat X\to X}$, we study the properties of the point $(p,s)$ in terms of the restriction $\psi\colon{B\tensor X\to X}$ of $\xi$ to $B\tensor X$ rather than via $\xi$ itself. The present section sketches how this works; for more details we refer to~\cite{Actions}.

\subsection{Basic analysis}\label{basic analysis}
Suppose that $A$ and $G$ are objects of $\A$ and $\psi\colon A\tensor G\to A$ is a morphism for which the induced morphism $k_{\psi} =q \comp \iota_A$ in the diagram
\[
\xymatrix{& A \ar@{.>}[rd]^-{\iota_{A}} \ar@/^/@{.>}[rrd]^-{k_{\psi}} \\
A\tensor G \ar@{.>}[ru]^-{\psi} \ar@<-.5ex>[rr]_-{\iota_{A,G}} \ar@<.5ex>[rr]^-{\iota_A\circ \psi} && A+G \ar@{-{ >>}}[r]^-q & \Coeq(\iota_{A,G},\iota_A\comp \psi)=Q}
\]
is a monomorphism. We may then write $A\rtimes_{\psi}G=Q$ and call the object $Q$ the \defn{semi-direct product of $A$ and $G$ along~$\psi$}, as it fits into the diagram
\begin{equation*}
\xymatrix@=3em{0 \ar[r] & A\tensor G \ar@{{ |>}->}[r]^-{\iota_{A,G}} \ar[d]_-{\psi} & A+G \ar@{-{ >>}}[r]^-{\langle \rho_{A},\rho_{G}\rangle} \ar[d]_-{q} \ar@<.5ex>@{.>}[rd]^(.4){\rho_{G}} & A\times G \ar[d]^-{\pi_{G}} \ar[r] & 0\\
0 \ar[r] & A \ar@{{ |>}->}[r]_-{k_{\psi}} & A\rtimes_{\psi}G \ar@{-{ >>}}@<.5ex>[r]^-{p_{\psi}} & G \ar@{.>}@<.5ex>[ul]^(.6){\iota_{G}} \ar[r] \ar@{{ >}->}@<.5ex>[l]^-{s_{\psi}} & 0.}
\end{equation*}
The rows are short exact sequences, $p_{\psi}$ is induced by $\rho_G\colon A+G\to G$ and $s_{\psi} =q\comp \iota_G$.

That is to say, if $k_{\psi}$ is a monomorphism then the morphism $\psi\colon A\tensor G\to A$ gives rise to a point $(p_{\psi},s_{\psi})\colon{A\rtimes_{\psi}G\point G}$. It is not difficult to see that conversely, any point $(p,s)\colon{X\point G}$ will give rise to such a $\psi$ through the diagram with short exact rows
\[
\xymatrix@C=4em{0 \ar[r] & A\tensor G \ar@{{ |>}->}[r]^-{\iota_{A,G}} \ar@{.>}[d]_-{\psi} & A+G \ar@{-{ >>}}[r]^-{\langle \rho_{A},\rho_{G}\rangle} \ar[d]_-{\bigsom{a}{s}} & A\times G \ar[d]^-{\pi_{G}} \ar[r] & 0\\
0 \ar[r] & A \ar@{{ |>}->}[r]_-{a} & X \ar@{-{ >>}}@<.5ex>[r]^-{p} & G \ar@{{ >}->}@<.5ex>[l]^-{s} \ar[r] & 0,}
\]
and that the two constructions are each other's inverse. 

Hence a morphism $\psi\colon A\tensor G\to A$ is induced by an action if and only if the above-mentioned condition holds. (Compare with the analysis of actions worked out in~\cite{MFS}.) We shall then say that $\psi$ \defn{induces} or \defn{determines} an action and sometimes, abusively, that $\psi$ \defn{is} an action. Note that here we consider $\psi$ as a morphism defined on $A\tensor G$ instead of $G\tensor A$ as in the introduction.

\begin{example}\label{Action-G-group}
In the category of groups a morphism $\psi\colon{A\tensor G \to A}$ induces an action if and only if the function
\[
(g,a)\mapsto g\cdot a =\psi(gag^{-1}a^{-1})a
\]
is a $G$-group structure, that is, it does not only satisfy the rules $1\cdot a =a$ and $(gg')\cdot a=g\cdot (g'\cdot a)$, but also $g\cdot (a a')=(g\cdot a)(g\cdot a')$. This agrees with the fact that in $\Gp$, semi-direct products correspond with $G$-groups rather than with general actions; see the detailed discussion in~\cite{Actions}.
\end{example}

\begin{example}\label{Example-Conjugation}
For any object $X$, the morphism
\[
c^{X,X}=c^{X}_{2}=\nabla_{X}^{2}\comp\iota_{X,X}\colon{X\tensor X\to X}
\]
induces an action of $X$ on itself called the \defn{conjugation action}~\cite{Janelidze-Marki-Ursini}, which corresponds to the split short exact sequence
\[
\xymatrix@=3em{0 \ar[r] & X \ar@{{ |>}->}[r]^-{\langle1_{X},0\rangle} & X\times X \ar@{-{ >>}}@<.5ex>[r]^-{\pi_{2}} & X \ar[r] \ar@{{ >}->}@<.5ex>[l]^-{\langle1_{X},1_{X}\rangle} & 0.}
\]
\end{example}

\begin{proposition}\cite{Actions}\label{conjnormsub}
Let $n\colon N \to X$ be a normal monomorphism. Then the unique morphism $c^{N,X}\colon N\tensor X \to N$ for which the diagram
\[
\xymatrix{N\tensor X \ar@{.>}[r]^-{c^{N,X}} \ar[d]_-{n\tensor 1_{X}} & N \ar@{{ |>}->}[d]^-{n} \\
X\tensor X \ar[r]_-{c^{X,X}} & X}
\]
commutes determines an action, called the \defn{conjugation action of $X$ on $N$}. This process is natural in the sense that any commutative diagram as on the left
\[
\xymatrix{N \ar[r] \ar@{{ |>}->}[d] & N' \ar@{{ |>}->}[d]\\
X \ar[r] & X'}
\qquad\qquad
\xymatrix{N\tensor X \ar[r] \ar[d]_-{c^{N,X}} & N'\tensor X' \ar[d]^-{c^{N',X'}}\\
N \ar[r] & N'}
\]
gives a commutative diagram as on the right.
\end{proposition}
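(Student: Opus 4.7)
The plan is to verify three claims in turn: the existence and uniqueness of $c^{N,X}$ fitting into the square, that this morphism indeed induces an action in the sense of Section~\ref{basic analysis}, and finally the naturality square. Uniqueness of $c^{N,X}$ is clear since $n$ is a monomorphism. For existence, I would use that $n$ normal means $N=\Ker(f)$ for some $f\colon X\to Y$ and show that $f\comp c^{X,X}\comp (n\tensor 1_{X})=0$. Using $c^{X,X}=\nabla_{X}\comp \iota_{X,X}$ and naturality of coproduct inclusions, this composite can be rewritten as $\som{0}{f}\comp \iota_{N,X}=f\comp \rho_{X}\comp \iota_{N,X}$, and the rightmost composite vanishes because $\iota_{N,X}$ is the kernel of the comparison morphism $\langle \rho_{N},\rho_{X}\rangle\colon N+X\to N\times X$. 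Alternatively, one could invoke the inclusion $[N,X]\leq N$ valid for any normal $N$, which follows from Proposition~\ref{Proposition-Commutator-Rules}(o) and~(ii) applied to the regular epimorphism $X\to X/N$.

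For the action property, I would exhibit a concrete split short exact sequence whose induced morphism via the basic analysis is $c^{N,X}$. Writing $N=\Ker(f)$ for some $f\colon X\to Y$, the kernel pair $(X\times_{Y}X,f_{1},f_{2})$ supplies a split epimorphism $f_{2}$ with section $\Delta\colon X\to X\times_{Y}X$, whose kernel is given by $\langle n,0\rangle\colon N\to X\times_{Y}X$. The resulting split short exact sequence induces a unique $\psi\colon N\tensor X\to N$ determined by $\langle n,0\rangle \comp \psi=\bigsom{\langle n,0\rangle}{\Delta}\comp \iota_{N,X}$. Unpacking the right-hand side along the two product projections $\pi_{1}$, $\pi_{2}$ of $X\times_{Y}X\hookrightarrow X\times X$, the first component yields $\nabla_{X}\comp (n+1_{X})\comp \iota_{N,X}=c^{X,X}\comp (n\tensor 1_{X})=n\comp c^{N,X}$ and the second component yields $\rho_{X}\comp \iota_{N,X}=0$. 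Thus $\langle n,0\rangle \comp \psi=\langle n,0\rangle\comp c^{N,X}$, and monicity of $\langle n,0\rangle$ forces $\psi=c^{N,X}$, confirming that $c^{N,X}$ determines an action.

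Naturality is then a formal consequence of the defining universal property. Given the commutative square with $n'\comp f=g\comp n$, both $n'\comp f\comp c^{N,X}$ and $n'\comp c^{N',X'}\comp (f\tensor g)$ reduce to $g\comp c^{X,X}\comp (n\tensor 1_{X})$, using the defining squares for $c^{N,X}$ and $c^{N',X'}$ together with naturality of $c^{X,X}=\nabla_{X}\comp \iota_{X,X}$ in its single argument. Since $n'$ is monic, the naturality square commutes. The main obstacle is Step~2: tracking the identification of the induced $\psi$ with $c^{N,X}$ through the basic analysis requires a careful column-wise matrix computation, but beyond that all three steps are formal manipulations once the normality of $n$ is expressed as $N=\Ker(f)$.
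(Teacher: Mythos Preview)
Your proof is correct and follows essentially the same strategy as the paper. For the existence of $c^{N,X}$ the paper composes with the cokernel $q$ of $n$ and uses naturality of $c^{(-),(-)}$ to get $q\comp c^{X,X}\comp(n\tensor 1_{X})=c^{Q,Q}\comp(0\tensor q)=0$; your version with $N=\Ker(f)$ and the observation $\som{0}{f}=f\comp\rho_{X}$ is the same computation phrased dually. Where you genuinely add content is in Step~2: the paper merely asserts that $c^{N,X}$ ``satisfies the condition of~\ref{basic analysis}'' without further argument, whereas your kernel-pair construction explicitly exhibits the point $(f_{2},\Delta)\colon X\times_{Y}X\rightleftarrows X$ whose associated $\psi$ is $c^{N,X}$, and your component-wise verification is correct. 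Your naturality argument is also correct and is omitted in the paper.
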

\begin{proof}
Writing $q\colon {X\to Q}$ for the cokernel of $n$, we see that
\[
q\comp c^{X,X}\comp (n\tensor 1_{X})=c^{Q,Q}\comp (q\tensor q)\comp (n\tensor 1_{X})=c^{Q,Q}\comp (0\tensor q)=0,
\]
so $c^{X,X}\comp (n\tensor 1_{X})$ factors over $n$. This $c^{N,X}$ satisfies the condition of~\ref{basic analysis}.
\end{proof}

\begin{proposition}[Co-universal property of the semi-direct product, \cite{Actions}]\label{PU}
Consider an action induced by $\psi\colon A\tensor G \to A$ and morphisms
\[
\xymatrix{A \ar[r]^-{f} & Z & G. \ar[l]_-{g}}
\]
Then there exists a (necessarily unique) morphism $\som{f}{g}\colon A \rtimes_{\psi} G \to Z$ such that $\som{f}{g}\comp k_{\psi} =f$ and $\som{f}{g}\comp s_{\psi} =g$ if and only if the diagram
\[\label{PUdia}
\vcenter{\xymatrix{A\tensor G \ar[r]^-{\psi} \ar[d]_-{f\tensor g} & A \ar[d]^-f\\
Z\tensor Z \ar[r]_-{c_2^Z} & Z}}
\]
commutes.
\end{proposition}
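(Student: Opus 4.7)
My plan is to exploit the fact that, by construction, $A\rtimes_{\psi}G$ is the coequaliser $q\colon A+G\to Q$ of the parallel pair $\iota_{A,G}$ and $\iota_{A}\comp\psi\colon A\tensor G\to A+G$, so that morphisms out of $A\rtimes_{\psi}G$ correspond to morphisms $h\colon A+G\to Z$ satisfying $h\comp\iota_{A,G}=h\comp\iota_{A}\comp\psi$. Given $f$ and $g$, the only candidate for the required factorisation comes from $\som{f}{g}\colon A+G\to Z$, the unique morphism with $\som{f}{g}\comp\iota_{A}=f$ and $\som{f}{g}\comp\iota_{G}=g$; once such a factorisation through $q$ exists, the identities $\som{f}{g}\comp k_{\psi}=f$ and $\som{f}{g}\comp s_{\psi}=g$ are automatic from $k_{\psi}=q\comp\iota_{A}$ and $s_{\psi}=q\comp\iota_{G}$. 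So everything reduces to translating the coequaliser condition into the commutativity of the stated diagram.

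To carry out that translation, I first factor $\som{f}{g}=\nabla_{Z}\comp(f+g)$. Naturality of the kernel (equivalently, functoriality of the binary cross-effect as in Remark~\ref{Remark-Ternary-Cross-Effect}) then yields $(f+g)\comp\iota_{A,G}=\iota_{Z,Z}\comp(f\tensor g)$, so
\[
\som{f}{g}\comp\iota_{A,G}=\nabla_{Z}\comp\iota_{Z,Z}\comp(f\tensor g)=c_{2}^{Z}\comp(f\tensor g).
\]
On the other hand, $\som{f}{g}\comp\iota_{A}\comp\psi=f\comp\psi$. Hence $\som{f}{g}$ coequalises the defining pair of $Q$ if and only if the square in the statement commutes, which proves both implications at once.

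Uniqueness of $\som{f}{g}\colon A\rtimes_{\psi}G\to Z$, in the case it exists, is immediate since the pair $(k_{\psi},s_{\psi})$ is jointly epimorphic: $q$ is a regular epimorphism and $(\iota_{A},\iota_{G})$ is jointly epic in $A+G$. The only thing to double-check is the naturality identity $(f+g)\comp\iota_{A,G}=\iota_{Z,Z}\comp(f\tensor g)$; this is the mildest step, since it is forced by the universal property of $\iota_{Z,Z}$ as a kernel together with the fact that the comparison maps $\bigsom{1}{0}{0}{1}$ intertwine $f+g$ and $f\times g$. I do not anticipate a serious obstacle: the whole argument is really just bookkeeping around the coequaliser description of $A\rtimes_{\psi}G$.
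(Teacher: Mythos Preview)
Your proposal is correct and follows essentially the same route as the paper: both use that $A\rtimes_{\psi}G$ is the coequaliser of $\iota_{A,G}$ and $\iota_{A}\comp\psi$, reduce the existence question to whether $\som{f}{g}\colon A+G\to Z$ coequalises this pair, and then compute $\som{f}{g}\comp\iota_{A,G}=\nabla_{Z}\comp(f+g)\comp\iota_{A,G}=\nabla_{Z}\comp\iota_{Z,Z}\comp(f\tensor g)=c_{2}^{Z}\comp(f\tensor g)$ versus $\som{f}{g}\comp\iota_{A}\comp\psi=f\comp\psi$. Your write-up is in fact slightly more complete than the paper's, which only spells out the ``if'' direction and leaves the converse and the uniqueness implicit.
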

\begin{proof}
The morphism $\som{f}{g}\colon{A+G\to Z}$ factors over $q\colon{A+G\to A \rtimes_{\psi} G}$ because
\begin{align*}
\som{f}{g}\comp \iota_{A}\comp\psi &= f\comp \psi = c_{2}^{Z}\comp (f\tensor g) = \nabla_{Z}\comp \iota_{Z,Z}\comp (f\tensor g)\\
& = \nabla_{Z}\comp (f+ g)\comp\iota_{A,G}=\som{f}{g}\comp \iota_{A,G}.
\end{align*}
This factorisation is clearly unique. Now $\som{f}{g}\comp s_{\psi}= \som{f}{g}\comp q\comp \iota_{G}=g$ and $\som{f}{g}\comp k_{\psi}=\som{f}{g}\comp q\comp \iota_{A}=f$, which finishes the ``if''-part of the proof.
\end{proof}

\begin{example}\label{Trivial-Action}
The \defn{trivial action} of an object $G$ on an object $A$ is induced by the zero morphism $0\colon {A\tensor G\to A}$. Here the semi-direct product ${A\rtimes_{0} G}$ is $A\times G$ and~$p_{0}$ is the product projection $\pi_{G}\colon {A\times G\to G}$. Hence two coterminal morphisms~$f$ and $g$ as in Proposition~\ref{PU} Huq-commute if and only if $c_{2}^{Z}\comp (f\tensor g)$ is trivial. This of course also follows immediately from the fact that $A\times G$ is the cokernel of~$\iota_{A,G}\colon{A\tensor G\to A+G}$ and the equality $c_{2}^{Z}\comp (f\tensor g)=\som{f}{g}\comp \iota_{A,G}$.
\end{example}

\begin{example}[Centrality]\label{Example-Centrality}
The conjugation action $c^{N,X}$ of an object $X$ on a normal subobject $N\normal X$ is trivial if and only if $N$ is \defn{central} in $X$, which means that $n\colon{N\to X}$ and $1_{X}\colon{X\to X}$ Huq-commute~\cite{Bourn2002}; indeed $n\comp c^{N,X}=c^{X,X}\comp (n\tensor 1_{X})$. (Compare with Theorem~3.2.4 in~\cite{AlanThesis}.)
\end{example}

Starting from conjugation actions we may again construct various new actions by the following device, of which the proof is immediate from~\ref{basic analysis}.

\begin{proposition}\cite{Actions}\label{stabact}
Suppose that $\psi\colon A\tensor G \to A$ induces an action, $m\colon {M\to A}$ is a monomorphism and $h\colon H\to G$ a morphism. Suppose that $M$ is~\defn{$H$-stable under~$\psi$}, that is, $\psi\comp (m\tensor h)\colon M\tensor H \to A$ factors through a (necessarily unique) $\varphi\colon M\tensor H \to M$ such that $\psi\comp (m\tensor h)=m\comp\varphi$. Then $\varphi$ induces an action of $H$ on~$M$.\noproof
\end{proposition}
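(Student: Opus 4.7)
The plan is to verify the criterion from Section~\ref{basic analysis}: one needs to check that the canonical morphism $k_{\varphi}\colon M\to M\rtimes_{\varphi}H$ coming from the coequaliser construction is a monomorphism. Since $M\rtimes_{\varphi}H$ exists abstractly as the coequaliser $Q_{\varphi}$ of $\iota_{M,H}$ and $\iota_{M}\comp\varphi\colon M\tensor H\to M+H$, the goal reduces to producing a monomorphism out of $Q_{\varphi}$ through which $k_{\varphi}$ factors, or more modestly, a morphism out of $Q_{\varphi}$ whose composite with $k_{\varphi}$ is a monomorphism.

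The natural candidate is built from the ambient semi-direct product $A\rtimes_{\psi}G$. First I would consider the cotuple
\[
\som{k_{\psi}\comp m}{s_{\psi}\comp h}\colon M+H \to A\rtimes_{\psi}G
\]
and check that it coequalises $\iota_{M,H}$ and $\iota_{M}\comp\varphi$. The computation uses two ingredients: the naturality of co-smash products, which gives
\[
\iota_{A,G}\comp (m\tensor h) = (m+h)\comp\iota_{M,H},
\]
and the hypothesis $\psi\comp (m\tensor h)=m\comp\varphi$ together with the defining coequaliser identity $q_{\psi}\comp\iota_{A}\comp\psi=q_{\psi}\comp\iota_{A,G}$ for $\psi$. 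Chasing these identities and using $q_{\psi}\comp (m+h)=\som{k_{\psi}\comp m}{s_{\psi}\comp h}$ (which follows from $k_{\psi}=q_{\psi}\comp\iota_{A}$ and $s_{\psi}=q_{\psi}\comp\iota_{G}$) yields the required equality of the two composites.

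From the coequaliser property, one obtains a (unique) factorisation $f\colon M\rtimes_{\varphi}H\to A\rtimes_{\psi}G$ with $f\comp q_{\varphi}=\som{k_{\psi}\comp m}{s_{\psi}\comp h}$; precomposing with $\iota_{M}$ yields $f\comp k_{\varphi}=k_{\psi}\comp m$. Since both $k_{\psi}$ and $m$ are monomorphisms, so is their composite, and hence $k_{\varphi}$ is a monomorphism as well. Invoking the criterion from Section~\ref{basic analysis} concludes that $\varphi$ determines an action of $H$ on $M$. Uniqueness of $\varphi$ is immediate from $m$ being a monomorphism, as already noted in the statement.

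The only subtle step is the coequaliser verification, which is essentially a diagram chase; there is no genuine obstacle here once one recognises that the data of the restricted action $\varphi$ are already encoded inside $A\rtimes_{\psi}G$ via the subobject $M\join s_{\psi}(H)$. Indeed, the image of $f$ is precisely this join, and $f$ exhibits the proposed semi-direct product $M\rtimes_{\varphi}H$ as (isomorphic to) that subobject, making the monomorphicity of $k_{\varphi}$ transparent.
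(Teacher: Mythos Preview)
Your proof is correct and is precisely the natural elaboration of what the paper indicates: the paper gives no proof here (it ends with \verb|\noproof| and the preceding sentence says the result is ``immediate from~\ref{basic analysis}''), and your argument is exactly how one makes that immediacy explicit---map into the ambient semi-direct product $A\rtimes_{\psi}G$ and use that $k_{\psi}\comp m$ is a monomorphism to conclude that $k_{\varphi}$ is.

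One small caveat about your closing remark: the claim that $f$ exhibits $M\rtimes_{\varphi}H$ as \emph{isomorphic to} the subobject $k_{\psi}(M)\join s_{\psi}(h(H))$ requires $f$ to be a monomorphism, which need not hold when $h$ is not mono (take $M=A$, $m=1_{A}$, so $f=1_{A}\rtimes h$). This does not affect your argument, since by that point you have already established that $k_{\varphi}$ is mono via $f\comp k_{\varphi}=k_{\psi}\comp m$; the final paragraph is commentary rather than proof.
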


\begin{notation}\label{Notation-Pulation}
If $M=A$ in the above proposition then we write $ \varphi = h^*(\psi)$
\begin{equation*}\label{Pullback}
\vcenter{\xymatrix@!0@R=3.5em@C=5em{A\tensor H \ar[r]^-{h^{*}(\psi)} \ar[d]_-{1_{A}\tensor h} & A \ar@{=}[d] \\
A\tensor G \ar[r]_-{\psi} & A}}
\end{equation*}
and call $\varphi$ the \defn{pullback} of $\psi$ along $h$. This choice of terminology is explained by the fact that the above diagram matches the morphism of split short exact sequences
\[
\xymatrix@!0@R=3.5em@C=5em{0 \ar[r] & A \ar@{=}[d] \ar@{{ |>}->}[r]^-{k_{\varphi}} & A\rtimes_{\varphi} H \ar@<.5ex>@{-{ >>}}[r]^-{p_{\varphi}} \ar[d]_-{1_{A}\rtimes h} & H \ar[d]^-{h} \ar[r] \ar@<.5ex>@{{ >}->}[l]^-{s_{\varphi}} & 0\\
0 \ar[r] & A \ar@{{ |>}->}[r]_-{k_{\psi}} & A\rtimes_{\psi} G \ar@<.5ex>@{-{ >>}}[r]^-{p_{\psi}} & G \ar[r] \ar@<.5ex>@{{ >}->}[l]^-{s_{\psi}} & 0.}
\]
It is well known that now the right hand side square of the diagram is a pullback~\cite{Bourn2001}. In fact, one easily sees that it is also a pushout.
\end{notation}

\begin{example}\label{Example-Normal}
If $N\normal X$ as in Proposition~\ref{conjnormsub} then
\[
n^{*}(c^{N,X})=c^{N,N}=c^{N}_{2}.
\]
Indeed, $n\comp c^{N,X}\comp(1_{N}\tensor n)=c^{X,X}\comp (n\tensor 1_{X})\comp (1_{N}\tensor n)=c^{X}_{2}\comp (n\tensor n)$, which equals $n\comp c^{N}_{2}$ by naturality of conjugation actions.
\end{example}

\begin{example}\label{Example-m=conj}
If ${\psi}\colon A\tensor G \to A$ determines an action then
\[
\psi = c^{A,A\rtimes_{\psi} G} \comp (1_{A}\tensor s_{\psi}) = s_{\psi}^{*}(c^{A,A\rtimes_{\psi} G}).
\]
This means that the action determined by $\psi$ coincides with the restriction to $G$ of the conjugation action of the semi-direct product ${A\rtimes_{\psi} G}$ on $A$.
\end{example}

\subsection{The induced higher-order operations}
Internal actions induce certain higher-order operations defined as follows.

\begin{notation}\label{Notation-Higher-Orders}
Let $A$ and $G$ be objects and $\psi\colon A\tensor G \to A$ a morphism. Consider $n\geq 2$ and $1\leq k\leq n-1$. Define $\psi_{k,n-k} $ to be the composite morphism
\[
\psi_{k,n-k}\colon \xymatrix@=4em{A\tensor \cdots\tensor A\tensor G\tensor \cdots \tensor G \ar[r]^-{S_{k,n-k}^{A,G}} & A\tensor G \ar[r]^-{\psi} & A.}
\]
In particular, if we take $\psi=c^{N,X}$ to be induced by the conjugation action of an object~$X$ on some normal subobject $N\normal X$, then we get morphisms
\[
c^{N,X}_{k,n-k}\colon N\tensor \cdots\tensor N\tensor X\tensor \cdots \tensor X\to N.
\]
\end{notation}

Note that $c^{N,X}_{1,1}=c^{N,X}$. Also the higher-order operations $c^{N,X}_{k,n-k}$ are interrelated, the generic relation being the following one: 

\begin{lemma}\label{c21=c12} 
For any normal monomorphism $n\colon N\to X$ the equality
\[
c_{2,1}^{N,X} = c_{1,2}^{N,X}\comp (1_{N}\tensor n\tensor 1_{X})\colon N\tensor N\tensor X\to N
\]
holds. In particular, $c_{2,1}^{X,X} = c_{1,2}^{X,X} = c_3^X$.
\end{lemma}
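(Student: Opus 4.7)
The plan is to exploit that $n$ is a monomorphism, reducing the identity to the equality of morphisms into $X$ obtained by post-composing with $n$. Then I would use the defining property $n\comp c^{N,X} = c^{X,X}\comp (n\tensor 1_{X})$ from Proposition~\ref{conjnormsub}, together with $c^{X,X} = c_{2}^{X} = \nabla_{X}\comp \iota_{X,X}$, to rewrite both $n\comp c_{2,1}^{N,X}$ and $n\comp c_{1,2}^{N,X}\comp (1_{N}\tensor n\tensor 1_{X})$ as composites of the codiagonal $\nabla_{X}$ with explicit morphisms between finite sums. The key ingredients here are the defining factorisations $\iota_{N,X}\comp S^{N,X}_{2,1} = (\nabla_{N} + 1_{X})\comp \iota_{N,N,X}$ and $\iota_{N,X}\comp S^{N,X}_{1,2} = (1_{N} + \nabla_{X})\comp \iota_{N,X,X}$ from Notation~\ref{Folding Operations}, together with the analogous expressions for $n\tensor 1_X$ and $1_N\tensor n\tensor 1_X$ on sums.

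Unpacking all of this, the left hand side becomes $\nabla_{X}\comp (n+1_{X})\comp (\nabla_{N} + 1_{X})\comp \iota_{N,N,X}$, while the right hand side becomes $\nabla_{X}\comp (n+1_{X})\comp (1_{N} + \nabla_{X})\comp (1_{N} + n + 1_{X})\comp \iota_{N,N,X}$. Evaluating each of the two resulting morphisms $N+N+X\to X$ on the three coproduct inclusions shows that both agree with the single morphism $\somm{n}{n}{1_{X}}\colon N+N+X\to X$; the underlying observation is merely the naturality of the codiagonal, $n\comp \nabla_{N} = \nabla_{X}\comp (n+n)$. Since $n$ is a monomorphism, this establishes the main identity.

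The particular case $N=X$, $n=1_{X}$ is then immediate: both sides reduce to $\nabla_{X}\comp \iota_{X,X,X}$, which is the defining composite of $c_{3}^{X}$ in Definition~\ref{comdef}.

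No serious conceptual obstacle is expected. The proof is pure bookkeeping once the folding operations and tensor-of-morphisms are translated into their defining factorisations on coproducts, and the only non-trivial ingredient is the naturality of the codiagonal applied to $n$. The main hazard is just keeping track of which summand each component of $N+N+X$ is sent to across the various morphisms; writing out what happens on each of $\iota_{1}$, $\iota_{2}$, $\iota_{3}$ makes everything transparent.
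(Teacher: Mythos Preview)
Your proposal is correct and follows exactly the approach the paper sketches in one sentence: post-compose with the monomorphism $n$ and unwind everything through the inclusions of the co-smash products into the corresponding sums, where both sides become $\somm{n}{n}{1_{X}}\comp\iota_{N,N,X}$. You have simply written out in full the bookkeeping that the paper leaves to the reader.
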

\begin{proof}
Post-compose with $n$ and use the commutative diagrams obtained by injecting the various co-smash products into the corresponding sums.
\end{proof}

This coherence condition in terms of ternary co-smash products will appear again in the analysis of crossed modules: see for instance Theorem~\ref{Theorem-Characterisation-XMod} below. We shall also investigate some closely related structures such as Beck modules, which satisfy variations of this condition (see Section~\ref{Section-Beck}).

\section{The \emph{Smith is Huq} condition}\label{Section-SH}
We explain how the \emph{Smith is Huq} condition for finitely cocomplete homological categories may be expressed in terms of co-smash products as the vanishing of a ternary commutator. Thus a condition which is about \emph{locally defined internal categorical structures admitting a global extension} is characterised as a computational obstruction. This is \emph{the} key point of the present article---all results in the ensuing sections are based on it.

Theorem~\ref{Theorem-SH-1} characterises when two given equivalence relations $R$, $S$ on a common object $X$ commute in the Smith sense: if $K$ and $L$, respectively, denote their denormalisations, then
\[
[K,L]=0=[K,L,X]
\] 
is a necessary and sufficient condition. This immediately gives a characterisation of the \emph{Smith is Huq} condition (Theorem~\ref{Theorem-SH}) and a formula for the Smith commutator in terms of co-smash products (Theorem~\ref{Theorem-Smith-Commutator}). We also find a characterisation of double central extensions (Proposition~\ref{Characterisation-Double-Central-Extensions}), which allows us to make the Hopf formula for the third homology of an object in any semi-abelian category with enough projectives explicit (Theorem~\ref{Theorem-H3}).

\subsection{The Smith commutator}
Consider a pair of equivalence relations $(R,S)$ on a common object $X$
\begin{equation*}\label{Category-RG}
\xymatrix@!0@=5em{R \ar@<1.5ex>[r]^-{r_{1}} \ar@<-1.5ex>[r]_-{r_{2}} & X \ar[l]|-{\Delta_{R}} \ar[r]|-{\Delta_{S}} & S, \ar@<1.5ex>[l]^-{s_{1}} \ar@<-1.5ex>[l]_-{s_{2}}}
\end{equation*}
and consider the induced pullback of $r_{1}$ and $s_{2}$.
\begin{equation*}\label{Pullback-RS}
\vcenter{\xymatrix@!0@=4em{R\times_{X}S \ar@{}[rd]|<<{\pullback} \ar[r]^-{\pi_{S}} \ar[d]_-{\pi_{R}} & S \ar[d]^-{s_{2}} \\
R \ar[r]_-{r_{1}} & X}}
\end{equation*}
The equivalence relations $R$ and $S$ are said to \defn{Smith-commute}~\cite{Smith,Pedicchio,BG} if and only if there is a (necessarily unique) morphism $\theta$ (a \defn{connector} between $R$ and~$S$) for which the diagram
\[
\xymatrix@!0@=3em{ & R \ar[ld]_{\langle 1_{R},\Delta_{S} \circ r_{1}\rangle} \ar[rd]^-{r_{2}} \\
R\times_{X}S \ar@{.>}[rr]|-{\theta} && X\\
& S \ar[lu]^{\langle\Delta_{R} \circ s_{2},1_{S}\rangle} \ar[ru]_-{s_{1}}}
\]
is commutative. The connector $\theta$ is a partially defined Mal'tsev operation on~$X$, as the diagram commutes precisely when $\theta(x,x,z)=z$ for $(x,z)\in S$ and $\theta(x,z,z)=x$ for $(x,z)\in R$. It is also the same thing as a \defn{pregroupoid structure}~\cite{Kock-Pregroupoids, Johnstone:Herds} on the span $(d=\coeq(r_{1},r_{2}), c=\coeq(s_{1},s_{2}))$.

The \defn{Smith commutator $[R,S]^{\Smith}$ of~$R$ and~$S$} is the smallest equivalence relation on $X$ that should be divided out to make $R$ and $S$ commute, so that they do commute if and only if~${[R,S]^{\Smith}=\Delta_{X}}$. This equivalence relation may be obtained through the colimit $Q$ of the outer square above, as the kernel pair of the (regular epi)morph\-ism~${X\to Q}$.

\subsection{The \emph{Smith is Huq} condition}\label{Subsection-SH}
The \defn{normalisation} $K$ of an equivalence relation $(R,r_{1},r_{2})$ on $X$ is the monomorphism
\[
r_{2}\comp\ker(r_{1})\colon K=\Ker(r_{1})\to X.
\]
A monomorphism is called an \defn{ideal} if and only if it is the normalisation of some (necessarily unique) equivalence relation~\cite{Bourn2000}. In a homological category, ideals are direct images of kernels along regular epimorphisms---see~\cite{MM-NC} for an in-depth analysis. For now, it suffices to note that the normalisation of an \emph{effective} equivalence relation is always a kernel; conversely, any normal subobject~$N\normal X$ (in the strong sense that it may be represented by a kernel) admits a \defn{denormalisation}~$R_{N}$, the kernel pair of its cokernel. This process determines an order isomorphism between the normal subobjects of $X$ and the effective equivalence relations on~$X$, which in the semi-abelian case coincides with the correspondence between ideals and equivalence relations.

It is well known that Smith-commuting equivalence relations always have Huq-commuting normalisations~\cite{BG}. However, the converse need not hold: Janelidze gave a counterexample in the category of digroups~\cite{Borceux-Bourn, Bourn2004}, which is a semi-abelian variety, even a variety of~$\Omega$-groups~\cite{Higgins}. (See also Example~\ref{Example-Loops}.) Thus arises a property homological categories may or may not have: 

\begin{definition}\label{Definition-SH}
A homological category satisfies the \defn{Smith is Huq} condition \defn{(SH)} if and only if two effective equivalence relations on a given object always commute as soon as their normalisations do.
\end{definition}

It turns out that the condition (SH) is fundamental in the study of internal categorical structures: it is shown in~\cite{MFVdL} that, for a semi-abelian category, this condition holds if and only if every star-multiplicative graph is an internal groupoid. As explained in~\cite{Janelidze} and in Section~\ref{Section-Crossed-Modules} of the present article, this is important when characterising internal crossed modules. 

The \emph{Smith is Huq} condition is known to hold for pointed strongly protomodular exact categories~\cite{BG} (in particular, for any Moore category~\cite{Rodelo:Moore}) and for action accessible categories~\cite{BJ07, AlanThesis} (in particular, for any category of interest~\cite{Montoli, Orzech}). Well-known examples are the categories of groups, Lie algebras, associative algebras, non-unitary rings, and (pre)crossed modules of groups.

\begin{theorem}\label{Theorem-SH-1}
In a finitely cocomplete homological category, consider effective equivalence relations $R$ and $S$ on $X$ with normalisations $K$, $L\normal X$, respectively. Then the following are equivalent:
\begin{enumerate}
\item $R$ and $S$ Smith-commute;
\item $[K,L]=0=[K,L,X]$.\noproof
\end{enumerate}
\end{theorem}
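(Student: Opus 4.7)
The plan is to translate the existence of a connector into a factorisation problem for a natural morphism out of $K+L+X$, and to identify the kernel of that presentation as the join of the two Higgins commutators.

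First, I use the fact that in the Mal'tsev setting every effective equivalence relation arises as a semi-direct product via the conjugation action of Proposition~\ref{conjnormsub}, so that $R\cong K\rtimes X$ and $S\cong L\rtimes X$. Each of the canonical morphisms $K+X\to R$ and $L+X\to S$ (normal inclusion on one summand, diagonal section on the other) is a regular epimorphism, and combining them with the double diagonal $\langle\Delta_R,\Delta_S\rangle\colon X\to R\times_X S$ yields a regular epimorphism
\[
q\colon K+L+X\twoheadrightarrow R\times_X S
\]
whose three restrictions to $K$, $L$ and $X$ are the composites of the normal inclusions and diagonal with the canonical morphisms $\langle 1_R,\Delta_S\comp r_1\rangle$ and $\langle\Delta_R\comp s_2,1_S\rangle$ appearing in the definition of a connector. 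Chasing the defining commutativity of a connector through these identifications shows that giving $\theta\colon R\times_X S\to X$ is the same as providing a factorisation of $\somm{k}{l}{1_X}\colon K+L+X\to X$ through $q$, which exists if and only if this morphism vanishes on the kernel pair of $q$.

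The hard part is the explicit identification of $\ker(q)$. I would compute it by iterated application of the decomposition Lemma~\ref{Tensor vs sum} to $K+L+X=(K+L)+X$ and its variants, combined with the split short exact sequences defining $R$ and $S$ and with the preservation of regular epimorphisms by co-smash products (Proposition~\ref{cr-reg-epi}). The outcome is that $\ker(q)$ equals the join of the images of the two canonical inclusions $\iota_{K,L}\colon K\tensor L\to K+L+X$ and $\iota_{K,L,X}\colon K\tensor L\tensor X\to K+L+X$; the other binary terms $K\tensor X$ and $L\tensor X$ are absorbed by the semi-direct product presentations of $R$ and $S$, which already identify them with the corresponding diagonal terms. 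Post-composition with $\somm{k}{l}{1_X}$ sends these two canonical inclusions to the Higgins commutator subobjects $[K,L]$ and $[K,L,X]$ by Definition~\ref{comdef}, so the factorisation, equivalently the connector, exists if and only if both commutators vanish.

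I expect the main obstacle to lie in the precise computation of $\ker(q)$ in the non-exact homological setting: while in the semi-abelian case one may exploit the fact that direct images of kernels are kernels and that short exact sequences paste cleanly, here one must argue throughout with regular epimorphisms and images, tracking carefully which tensor summands survive. A secondary subtlety is to verify that the factorisation $\theta$ so produced genuinely satisfies both connector equations, but this reduces to checking that the identity $\theta\comp q=\somm{k}{l}{1_X}$ restricts correctly along the canonical morphisms $R\to R\times_X S$ and $S\to R\times_X S$, which is built into the definition of $q$.
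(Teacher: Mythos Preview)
Your overall strategy---reducing the existence of the connector to a factorisation of $\somm{k}{l}{1_X}$ through a regular epimorphism $q\colon K+L+X\twoheadrightarrow R\times_X S$---is sound and parallels the paper's route via Lemma~\ref{Lemma-Nelson}. The paper likewise identifies $R\times_X S$ with an iterated semi-direct product $K\rtimes_\zeta(L\rtimes_\psi X)$ and reduces the problem to a factorisation condition. However, your central claim, that $\ker(q)$ equals the join of the images of $\iota_{K,L}$ and $\iota_{K,L,X}$, is false, and this is where the argument breaks.

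Already in $\Gp$ the kernel is strictly larger. Write $x\cdot k\in K$ for the conjugate of $k\in K$ by $x\in X$. The word $xkx^{-1}(x\cdot k)^{-1}$ in $K+L+X$ lies in $\ker(q)$, since in $R\times_X S$ the copy of $X$ acts on $K$ precisely by conjugation. But it does not lie in the join you describe: every element of that join has trivial image under the retraction $K+L+X\to K+X$ (both $K\tensor L$ and $K\tensor L\tensor X$ die there), whereas $xkx^{-1}(x\cdot k)^{-1}$ is a generally nontrivial element of $K+X$. What \emph{is} true is that $\somm{k}{l}{1_X}$ kills such elements automatically, because the action of $X$ on $K$ used to build $R$ is the conjugation action in $X$. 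So the correct claim is not that the $K\tensor X$ and $L\tensor X$ contributions are ``absorbed'' out of $\ker(q)$, but that their contributions to $\ker(q)$ impose no condition on $\somm{k}{l}{1_X}$. Your proposal conflates these two statements.

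The paper avoids computing $\ker(q)$ altogether. Instead of working with the sum, it applies the co-universal property of the semi-direct product (Proposition~\ref{PU}) twice: first to reduce the existence of $\theta$ to that of $\som{k}{\som{l}{1_X}}\colon K\rtimes_\zeta(L\rtimes_\psi X)\to X$, and then to reduce the latter to the commutativity of a single square on $K\tensor(L\rtimes_\psi X)$. That object is decomposed via Proposition~\ref{Split-Exact} into the three pieces $K\tensor X$, $K\tensor L$, $K\tensor L\tensor X$, and the square is checked on each. On $K\tensor X$ it commutes unconditionally by Example~\ref{Example-m=conj} and naturality of conjugation---this is where your ``absorption'' actually occurs, as an automatic equality rather than a vanishing of a kernel piece---while on the other two pieces commutativity amounts exactly to $[K,L]=0$ and $[K,L,X]=0$. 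This device is what makes the argument go through in the merely homological (non-exact) setting, where a direct description of $\ker(q)$ would be delicate.
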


Hence a homological category satisfies (SH) if and only if for every pair of effective equivalence relations of which the normalisations commute, \emph{the ternary commutator obstruction vanishes}. The proof is an obvious application of the following fundamental lemma (take $\beta=1$). The basic admissibility condition which appears in it was first discovered by Martins--Ferreira~\cite{MF-PhD,NMF2}. (Incidentally, we believe that Lemma~\ref{Lemma-Nelson} answers part of the question asked in the concluding section of that paper; see also~\cite{MFVdL3}.) We shall consider diagrams of shape
\begin{equation}\label{couniv}
\vcenter{\xymatrix@!0@=4em{A \ar@<.5ex>[r]^-{f} \ar[rd]_-{\alpha} & B
\ar@<.5ex>[l]^-{r}
\ar@<-.5ex>[r]_-{s}
\ar[d]^-{\beta} & C \ar@<-.5ex>[l]_-{g} \ar[ld]^-{\gamma}\\
& D}}
\end{equation}
with $f\comp r=1_{B}=g\comp s$ and $\alpha \comp r=\beta=\gamma \comp s$. By taking the pullback of $f$ with $g$, any diagram such as~\eqref{couniv} may be extended to a diagram
\[
\vcenter{\xymatrix@!0@=3em{ & C \ar@<.5ex>[ld]^-{e_2} \ar@<-.5ex>[rd]_-{g}
\ar@/^/[rrrd]^-{\gamma} \\
A\times_{B}C \ar@<.5ex>[ru]^-{\pi_C}
\ar@<-.5ex>[rd]_-{\pi_A} && B \ar@<.5ex>[ld]^-{r} \ar@<-.5ex>[lu]_-{s}
 \ar[rr]|-{\beta} && D\\
& A \ar@<.5ex>[ru]^-{f} \ar@<-.5ex>[lu]_-{e_1} \ar@/_/[urrr]_-{\alpha}}}
\]
in which the square is a double split epimorphism (that is, also the obvious squares involving splittings commute). The triple $(\alpha,\beta,\gamma)$ is said to be \defn{admissible with respect to $(f,r,g,s)$} if and only if there exists a (necessarily unique) morphism $\vartheta\colon{A\times_{B}C\to D}$ such that $\vartheta \comp e_{1}=\alpha$ and $\vartheta \comp e_{2}=\gamma$.

\begin{lemma}\label{Lemma-Nelson}
Given any diagram~\eqref{couniv}, let $\overline{k}\colon{\overline{K}\to D}$ be the image of $\alpha\comp \ker (f)$, $\overline{l}\colon{\overline{L}\to D}$ the image of $\gamma\comp \ker (g)$ and $\overline{\beta}\colon{\overline{B}\to D}$ the image of $\beta$. Then the triple $(\alpha,\beta,\gamma)$ is admissible with respect to $(f,r,g,s)$ if and only if
\[
[\overline{K},\overline{L}]=0=[\overline{K},\overline{L},\overline{B}].
\] 
\end{lemma}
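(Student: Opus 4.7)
My plan is to prove the lemma by reducing the existence of the comparison morphism $\vartheta$ to a single compatibility square via the co-universal property of semi-direct products (Proposition~\ref{PU}), then decomposing that square using the split exactness result of Proposition~\ref{Split-Exact} into three independent conditions, which will turn out to be, respectively, automatic, the binary commutator condition, and the ternary commutator condition.

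First, the pullback $P=A\times_{B}C$ sits in the split short exact sequence
\[
\xymatrix@=3em{0 \ar[r] & K \ar@{{ |>}->}[r]^-{\langle \ker f,0\rangle} & P \ar@<.5ex>@{-{ >>}}[r]^-{\pi_{C}} & C, \ar@<.5ex>@{{ >}->}[l]^-{e_{2}} \ar[r] & 0}
\]
so $P\cong K\rtimes_{\psi}C$ where $\psi\colon K\tensor C\to K$ is obtained via Proposition~\ref{conjnormsub} as the restriction of the conjugation action $c^{K,P}$ along $e_{2}$. By Proposition~\ref{PU}, a morphism $\vartheta\colon P\to D$ with $\vartheta\comp e_{2}=\gamma$ and $\vartheta\comp \langle\ker f,0\rangle = \alpha\comp\ker f$ exists if and only if the square
\[
\xymatrix@=3em{K\tensor C \ar[r]^-{\psi} \ar[d]_-{(\alpha\ker f)\tensor\gamma} & K \ar[d]^-{\alpha\comp\ker f}\\
D\tensor D \ar[r]_-{c_{2}^{D}} & D}
\]
commutes; and in that case $\vartheta\comp e_{1}=\alpha$ follows automatically, since it can be checked on the jointly strongly-epic pair $\bigl(\ker(f)\colon K\to A,r\colon B\to A\bigr)$, using $\alpha\comp r=\beta=\gamma\comp s$.

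Second, I will decompose $K\tensor C$ by applying Proposition~\ref{Split-Exact} to the split short exact sequence $0\to L\to C\to B\to 0$ coming from $g$ and $s$. This yields a split-exact sequence
\[
\xymatrix@=3em{(K\tensor L\tensor B)\rtimes(K\tensor L) \ar@{{ |>}->}[r] & K\tensor C \ar@<.5ex>@{-{ >>}}[r]^-{1_{K}\tensor g} & K\tensor B \ar@<.5ex>@{{ >}->}[l]^-{1_{K}\tensor s} \ar[r] & 0}
\]
so that the compatibility square may be verified on the three pieces $K\tensor B$, $K\tensor L$ and $K\tensor L\tensor B$ separately, using the embeddings provided by the split exact sequence together with the canonical inclusions from Lemma~\ref{Tensor vs sum}.

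Third, I check each of the three pieces. On $K\tensor B$ (via $1_{K}\tensor s$), the restriction of $\psi$ is precisely the action $\psi_{K,B}$ making $A=K\rtimes_{\psi_{K,B}}B$, and the required equation reduces to the co-universal property of $A$ giving $\alpha\colon A\to D$ with $\alpha\comp\ker f=\alpha\comp\ker f$ and $\alpha\comp r=\beta=\gamma\comp s$; hence this piece is automatic. On $K\tensor L$, the action of $L$ on $K$ inside $P=A\times_{B}C$ vanishes, because $K$ and (the $e_{2}$-image of) $L$ come from distinct components and centralise each other in $P$. The condition therefore reduces to $c_{2}^{D}\comp((\alpha\ker f)\tensor(\gamma\ker g))=0$, which by Example~\ref{Trivial-Action} and direct-image stability of commutators (Proposition~\ref{Proposition-Commutator-Rules}(ii)) is equivalent to $[\overline{K},\overline{L}]=0$. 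On $K\tensor L\tensor B$, modulo what has already been verified on $K\tensor B$ and $K\tensor L$, the equation collapses to $c_{3}^{D}\comp((\alpha\ker f)\tensor(\gamma\ker g)\tensor\beta)=0$, whose image is $[\overline{K},\overline{L},\overline{B}]$.

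The main obstacle will be this last step: explicitly computing the restriction of $\psi$ to $K\tensor L\tensor B$ via the embedding $\iota^{K\tensor(-)}_{L,B}$ of Lemma~\ref{Tensor vs sum}, and showing that -- after subtracting what the other two pieces already control -- the residual condition is precisely the vanishing of the ternary commutator morphism, with no extra terms appearing and nothing being missed. This will require careful diagram chasing with the folding operations $S^{X,Y}_{k,n-k}$ of Notation~\ref{Folding Operations}, the identity $c_{2,1}^{X,X}=c_{1,2}^{X,X}=c_{3}^{X}$ of Lemma~\ref{c21=c12}, and the naturality statement of Proposition~\ref{conjnormsub} applied to the inclusions $K\hookrightarrow P$, $L\hookrightarrow P$ and $B\hookrightarrow P$. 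The converse direction follows by reading the equivalences backwards: if $\vartheta$ exists, then each of the three conditions holds, and in particular the two commutator conditions do.
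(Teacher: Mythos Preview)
Your proposal is correct and follows essentially the same route as the paper's proof: reduce admissibility via Proposition~\ref{PU} to a single compatibility square for the action of $C$ on $K$ in $A\times_{B}C$, then split that square into three pieces using Proposition~\ref{Split-Exact}, obtaining an automatic condition on $K\tensor B$, the binary commutator on $K\tensor L$, and the ternary commutator on $K\tensor L\tensor B$. The only cosmetic difference is that the paper first replaces $C$ by $L\rtimes_{\psi}B$ before applying Proposition~\ref{PU}, whereas you keep $C$ intact; note that your claim that the action of $L$ on $K$ in $P$ vanishes is best justified not by an informal ``distinct components'' argument but by observing (via Notation~\ref{Notation-Pulation}) that the $C$-action on $K$ is the pullback along $g$ of the $B$-action on $K$ in $A$, hence kills $\ker(g)=L$.
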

\begin{proof}
We decompose $A$, $C$ and $A\times_{B}C$ into semi-direct products and then analyse in terms of the induced actions what it means for $\vartheta$ to exist. There are unique $\varphi$ and $\psi$ that give rise to the morphisms of split short exact sequences
\[
\vcenter{\xymatrix@!0@C=5em@R=4em{
0\ar[r] & K\ar@{=}[d] \ar@{{ |>}->}[r]^-{{\ker}(f)}& A 
\ar@<.5ex>@{-{ >>}}[r]^-{f} 
&B
\ar[r] \ar@<.5ex>@{{ >}->}[l]^-{r}
&0\\
0\ar[r] & K \ar@{{ |>}->}[r]_-{k_{\varphi}}& K\rtimes_{\varphi} B
\ar[u]^{\rho}_{\cong}
\ar@<.5ex>@{-{ >>}}[r]^-{p_{\varphi}} 
&B\ar@{=}[u]
\ar[r] \ar@<.5ex>@{{ >}->}[l]^-{s_{\varphi}}
&0
}}
\]
and
\[
\vcenter{\xymatrix@!0@C=5em@R=4em{
0\ar[r] & L\ar@{=}[d] \ar@{{ |>}->}[r]^-{{\ker}(g)}& C 
\ar@<.5ex>@{-{ >>}}[r]^-{g} 
&B
\ar[r] \ar@<.5ex>@{{ >}->}[l]^-{s}
&0\\
0\ar[r] & L \ar@{{ |>}->}[r]_-{k_{\psi}}& L\rtimes_{\psi} B
\ar[u]^{\sigma}_{\cong}
\ar@<.5ex>@{-{ >>}}[r]^-{p_{\psi}} 
&B \ar@{=}[u]
\ar[r] \ar@<.5ex>@{{ >}->}[l]^-{s_{\psi}}
&0.
}}
\]
By Notation~\ref{Notation-Pulation} we obtain the commutative diagram with exact rows
\[
\vcenter{\xymatrix@!0@C=8em@R=4em{
0\ar[r] & K\ar@{=}[d] \ar@{{ |>}->}[r]^-{k_{\zeta}}& K\rtimes_{\zeta} (L\rtimes_{\psi} B) \ar[d]^{\kappa}_{\cong}
\ar@<.5ex>@{-{ >>}}[r]^-{p_{\zeta}} 
&L\rtimes_{\psi} B
\ar[r] \ar@<.5ex>@{{ >}->}[l]^-{s_{\zeta}} \ar[d]^{\sigma}_{\cong}
&0\\
0\ar[r] & K\ar@{=}[d] \ar@{{ |>}->}[r]^-{\langle {\ker}(f),0\rangle}
& A\times_B C
\ar[d]^-{\pi_A}
\ar@<.5ex>@{-{ >>}}[r]^-{\pi_C} 
&C\ar[d]^-{g}
\ar[r] \ar@<.5ex>@{{ >}->}[l]^-{e_2=\langle r \circ g,1_C\rangle}
&0\\
0\ar[r] & K \ar@{{ |>}->}[r]_-{{\ker}(f)}& A 
\ar@<.5ex>@{-{ >>}}[r]^-{f} 
&B
\ar[r] \ar@<.5ex>@{{ >}->}[l]^-{r}
&0
}}
\]
in which $\zeta=(g \comp \sigma)^*(\varphi)=\varphi \comp (1_K\tensor p_{\psi})$. Now write $k=\alpha \comp {\ker}(f)\colon K\to D$ and $l=\gamma \comp {\ker}(g)\colon L\to D$. If the desired morphism $\vartheta$ exists then
\begin{align*}
\vartheta \comp \kappa &= \vartheta\comp\som{\langle{\ker}(f),0 \rangle}{
e_2 \comp \sigma}
= \vartheta\comp\som{\langle1_A,s\circ f\rangle \circ {\ker}(f)}
{e_2 \circ \sigma}
=\som{\vartheta \circ e_1 \circ {\ker}(f)}{\vartheta \circ e_2 \circ \sigma}\\
&=\som{\alpha \circ {\ker}(f)}{\gamma \circ \sigma}
=\bigsom{\alpha \circ {\ker}(f)}{
\som{\gamma \circ {\ker}(g)}{\beta}}
=\bigsom{k}{\som{l}{\beta}}.
\end{align*}
Conversely, if the morphism
\[
\vartheta'=\bigsom{k}{\som{l}{\beta}}
\]
exists then $\vartheta = \vartheta' \comp \kappa^{-1}$ satisfies the relevant constraints: it is clear from the above calculation that $
\vartheta' \comp \kappa^{-1} \comp e_2 =\gamma$ and that 
$
\vartheta' \comp \kappa^{-1} \comp e_1 \comp {\ker}(f)=\alpha \comp {\ker}(f)$,
but we also have
\begin{align*}
\vartheta' \comp \kappa^{-1} \comp e_1 \comp r &= \vartheta' \comp \kappa^{-1} \comp \langle 1_C,s\comp f\rangle \comp r = 
\vartheta' \comp \kappa^{-1} \comp \langle r,s\rangle
=\vartheta' \comp \kappa^{-1} \comp \langle r \comp g,1_C\rangle\comp s \\
&=
\vartheta' \comp \kappa^{-1} \comp e_2 \comp s=\gamma\comp s =\beta=\alpha \comp r.
\end{align*}
Thus $\vartheta' \comp \kappa^{-1} \comp e_1 =\alpha$. It follows that the desired morphism $\vartheta$ exists if and only if $\vartheta'$ exists, which according to Proposition~\ref{PU} is the case if and only if the diagram
\begin{equation}\label{fundlem-PUsquare}
\vcenter{\xymatrix{K\tensor (L\rtimes_{\psi}B) \ar[r]^-{\zeta} \ar[d]_-{k\tensor \som{l}{\beta}} & K \ar[d]^-{k} \\
D\tensor D \ar[r]_-{c^{D,D}} & D}}
\end{equation}
commutes. 
To find conditions for this to happen we use sequence~\eqref{Sequence-Rightex-Split-cross} from Proposition~\ref{Split-Exact} in order to decompose the object~$K\tensor (L\rtimes_{\psi}B)$ in three parts, via the regular epimorphism
\[
\somm{S_{1,2}^{K,L\rtimes_{\psi}B} \circ (1_K\tensor k_{\psi}\tensor s_{\psi})
}{1_{K}\tensor k_{\psi}}{1_{K}\tensor s_{\psi}}\colon(K\tensor L\tensor B)+(K\tensor L)+(K\tensor B)\to K\tensor (L\rtimes_{\psi}B).
\]
First note that by Example \ref{Example-m=conj} and by naturality the conjugation actions we have 
\begin{align*}
k\comp\zeta\comp(1_{K}\tensor s_{\psi})&= k\comp \varphi\comp (1_{K}\tensor p_{\psi})\comp (1_{K}\tensor s_{\psi})= k\comp \varphi=
k\comp c^{K,K\rtimes_{\varphi}B}\comp (1_{K}\tensor s_{\varphi})\\
&= c^{D,D}\comp (k\tensor \som{k}{\beta}) \comp (1_{K}\tensor s_{\psi})=c^{D,D}\comp (k\tensor \beta)\\
&=c^{D,D}\comp (k\tensor \som{l}{\beta})\comp(1_{K}\tensor s_{\psi}),
\end{align*}
so that Diagram~\eqref{fundlem-PUsquare} always commutes on $K\tensor B$.

Next, $k\comp\zeta\comp(1_{K}\tensor k_{\psi}))=k\comp\varphi\comp(1_{K}\tensor p_{\psi})\comp(1_{K}\tensor k_{\psi})=k\comp\varphi\comp(1_{K}\tensor 0)=0$. Hence, for the equality
\[
k\comp\zeta\comp(1_{K}\tensor k_{\psi}) = c^{D,D}\comp (k\tensor \som{l}{\beta})\comp(1_{K}\tensor k_{\psi})
\]
to hold, the morphism $c^{D,D}\comp(k\tensor l)=c_2^D\comp(\overline{k}\tensor \overline{l})\comp (k'\tensor l')
$ has to be trivial. (Here we write $k=\overline{k}\comp k'$, and similarly for $l$ and $\beta$.) Noting that $k'\tensor l'$ is a regular epimorphism by Proposition~\ref{cr-reg-epi}, we see that $c^{D,D}\comp(k\tensor l)=0$ precisely when~$[\overline{K},\overline{L}]={\Im}(c_2^D\comp(\overline{k}\tensor \overline{l}))$ is trivial.

Finally, 
\begin{align*}
k\comp\zeta\comp S_{1,2}^{K,L\rtimes_{\psi}B}\comp (1_K\tensor k_{\psi}\tensor s_{\psi})&= k\comp\varphi\comp(1_{K}\tensor p_{\psi})\comp
S_{1,2}^{K,L\rtimes_{\psi}B}\comp (1_K\tensor k_{\psi}\tensor s_{\psi})\\
&=k\comp\varphi\comp S_{1,2}^{K,B}\comp (1_K\tensor p_{\psi}\tensor p_{\psi})
\comp (1_K\tensor k_{\psi}\tensor s_{\psi}) \\
&= k\comp\varphi\comp S_{1,2}^{K,B}\comp (1_K\tensor 0\tensor 1_B)
\end{align*}
is zero, while
\begin{align*}
&c^{D,D}\comp (k\tensor \som{l}{\beta})\comp S_{1,2}^{K,L\rtimes_{\psi}B}\comp (1_K\tensor k_{\psi}\tensor s_{\psi})\\
&=c^{D,D}\comp S_{1,2}^{D,D}\comp (k\tensor \som{l}{\beta}\tensor \som{l}{\beta})\comp (1_K\tensor k_{\psi}\tensor s_{\psi})\\
&=c_3^D\comp (k\tensor l\tensor \beta)\\
&=c_3^D\comp (\overline{k}\tensor \overline{l}\tensor \overline{\beta}) \comp(k'\tensor l'\tensor \beta').
\end{align*}
As $k'\tensor l'\tensor \beta'$ is a regular epimorphism by Proposition~\ref{cr-reg-epi}, this tells us that Diagram~\eqref{fundlem-PUsquare} commutes on $K\tensor L\tensor B$ if and only if $[\overline{K},\overline{L},\overline{B}]={\Im}(c_3^D\comp (\overline{k}\tensor \overline{l}\tensor \overline{\beta}))$ is zero, which concludes the proof.
\end{proof}

\begin{theorem}\label{Theorem-SH}
The following are equivalent:\begin{enumerate}
\item the \emph{Smith is Huq} condition holds;
\item any two effective equivalence relations on a given object commute as soon as their normalisations do;
\item any two equivalence relations on a given object commute as soon as their normalisations do;
\item for all ideals $K$, $L$ of $X$ we have $[K,L,X]\leq [K,L]^{\Huq}$.
\end{enumerate}
\end{theorem}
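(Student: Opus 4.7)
The plan is as follows. The equivalence (i)$\Leftrightarrow$(ii) is just Definition~\ref{Definition-SH} together with the classical observation (recalled in Subsection~\ref{Subsection-SH}) that Smith-commuting equivalence relations always have Huq-commuting normalisations. The implication (iii)$\Rightarrow$(ii) is immediate, since effective equivalence relations are in particular equivalence relations.

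The first substantive step is to observe that the proof of Theorem~\ref{Theorem-SH-1} via Lemma~\ref{Lemma-Nelson} never actually uses effectiveness: applying Lemma~\ref{Lemma-Nelson} to the span
\[
R \xrightarrow{r_{1},\, r_{2}} X \xleftarrow{s_{1},\, s_{2}} S
\]
with $\alpha = r_{2}$, $\beta = 1_{X}$, $\gamma = s_{1}$ yields the very same characterisation for arbitrary equivalence relations $R$, $S$ on $X$ with (ideal) normalisations $K$, $L$: namely, $R$ and $S$ Smith-commute if and only if $[K,L] = 0 = [K,L,X]$. This \emph{generalised} form of Theorem~\ref{Theorem-SH-1} is the main technical tool.

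Granted it, the cycle (iii)$\Leftrightarrow$(iv) is easy. For (iv)$\Rightarrow$(iii), given normalisations $K$, $L$ with $[K,L]^{\Huq} = 0$, condition (iv) forces $[K,L,X] \leq [K,L]^{\Huq} = 0$, while $[K,L] \leq [K,L]^{\Huq} = 0$ always holds (Remark~\ref{Remark-Huq-Join}); the generalised Theorem~\ref{Theorem-SH-1} then gives Smith-commutation. For (iii)$\Rightarrow$(iv), given ideals $K$, $L$ of $X$, pass to the quotient $q\colon X \to Q = X/[K,L]^{\Huq}$. The images $q(K)$, $q(L)$ are ideals of $Q$ which Huq-commute in $Q$ by construction, so by (iii) the equivalence relations they normalise Smith-commute; applying the generalised Theorem~\ref{Theorem-SH-1} in $Q$ yields $[q(K), q(L), Q] = 0$, and since commutators are preserved by regular epimorphisms (Proposition~\ref{Proposition-Commutator-Rules}), this forces $[K,L,X] \leq \ker q = [K,L]^{\Huq}$.

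The main obstacle lies in closing the cycle with (ii)$\Rightarrow$(iii), that is, in upgrading \emph{Smith is Huq} from effective equivalence relations (equivalently, from kernels via Theorem~\ref{Theorem-SH-1}) to arbitrary ones (equivalently, to ideals). In a semi-abelian setting this is automatic: every equivalence relation is effective and every ideal is a kernel, so (ii), (iii) and the ``kernel version'' of (iv) all coincide. In the purely homological setting it is more delicate, and I expect the proof to run once more through the quotient argument above, but supplemented with a normal-closure analysis that controls $[K,L]^{\Huq}$ for ideals in terms of its kernel counterpart; this passage from kernels to ideals is where the most care will be required.
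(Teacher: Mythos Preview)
Your approach is essentially the paper's: the generalised form of Theorem~\ref{Theorem-SH-1} via Lemma~\ref{Lemma-Nelson} (which, as you note, never uses effectiveness) is indeed the key tool, your quotient argument for (iii)$\Rightarrow$(iv) is exactly the one the paper gives, and your (iv)$\Rightarrow$(iii) is a minor variant of the paper's (iv)$\Rightarrow$(ii).

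The only real difference is in how (ii)$\Rightarrow$(iii) is handled. You flag this as ``the main obstacle'' and sketch a speculative normal-closure-plus-quotient approach. The paper does not treat it as an obstacle at all: it simply cites Remark~2.4 of~\cite{MFVdL} for the equivalence (ii)$\Leftrightarrow$(iii) and remarks in passing that Lemma~\ref{Lemma-Nelson} would also do. So rather than the route you anticipate, the passage from effective to arbitrary equivalence relations is dispatched by external reference, and the paper supplies no in-text argument for it. Your instinct that this step deserves care in a non-exact homological setting is reasonable---and your version of (iii)$\Rightarrow$(iv), which works directly with ideals and the generalised Theorem~\ref{Theorem-SH-1}, is arguably cleaner than the paper's, which speaks of ``normal subobjects'' and invokes the effective Theorem~\ref{Theorem-SH-1}---but in the semi-abelian case, as you correctly observe, the whole issue is vacuous.
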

\begin{proof}
Conditions (i) and (ii) are equivalent by definition. The equivalence between (ii) and (iii) is Remark~2.4 in~\cite{MFVdL}, but may also be obtained using Lemma~\ref{Lemma-Nelson}. Now suppose that (iii) holds and consider normal subobjects $K$ and $L$ of $X$. Divide out their Huq commutator
\[
\xymatrix{0 \ar[r] & [K,L]^{\Huq} \ar@{{ |>}->}[r] & X \ar@{-{ >>}}[r]^-{q} & Q \ar[r] & 0}
\]
and write $q(K)$, $q(L)\leq Q$ for the direct images of $K$ and $L$ along $q$. By Proposition~\ref{Proposition-Commutator-Rules}.ii we obtain a diagram
\[
\xymatrix{&& [K,L,X] \ar@{{ >}->}[d] \ar@{-{ >>}}[r] \ar@{.>}[ld] & [q(K),q(L),Q] \ar@{{ >}->}[d]\\
0 \ar[r] & [K,L]^{\Huq} \ar@{{ |>}->}[r] & X \ar@{-{ >>}}[r]_-{q} & Q \ar[r] & 0}
\]
and a factorisation of $[K,L,X]$ over $[K,L]^{\Huq}$. Indeed, $[q(K),q(L),Q]$ is zero by Theorem~\ref{Theorem-SH-1}, as $[q(K),q(L)]=q[K,L]=0$.
Finally, (iv) \implies\ (ii) is again a consequence of Theorem~\ref{Theorem-SH-1}.
\end{proof}

This at once yields a new class of examples.

\begin{example}
A \defn{nilpotent category of class $2$} is a semi-abelian category whose identity functor is \defn{quadratic}, which means that it has a trivial ternary co-smash product~\cite{CCC}. Hence, almost by definition, any such category satisfies (SH). In particular, the \emph{Smith is Huq} condition holds for modules over a square ring, and specifically for algebras over a nilpotent algebraic operad of class two~\cite{BHP}.
\end{example}

\begin{example}
If $K$, $L$ and $M$ are normal subgroups of a group $G$ then
\[
[K,L,M]= [K,[L,M]]\join [L,[M,K]]\join [M,[K,L]]
\]
by a result in~\cite{CCC}. Hence in $\Gp$ all ternary commutator words are essentially of the shape considered in Example~\ref{Example-Binary-Groups}.

This of course also gives (SH). So far it is not clear which categories allow a similar decomposition of their ternary commutators.
\end{example}

For instance, the semi-abelian variety $\Loop$ of loops and loop homomorphisms forms a counterexample. We show that it does not satisfy the \emph{Smith is Huq} condition, which also implies that this category is neither action accessible nor strongly protomodular.

\begin{example}\label{Example-Loops}
A \defn{loop} is a quasigroup with unit, an algebra
\[
(A,\cdot,\backslash,/,1)
\]
of which the multiplication $\cdot$ and the left and right division $\backslash$ and $/$ satisfy the axioms
\begin{align*}
y &= x\cdot(x\backslash y) \qquad & y = x\backslash(x\cdot y)\\
x &= (x/y)\cdot y & x = (x\cdot y)/y
\end{align*} 
and $1$ is a unit for the multiplication, $x\cdot 1 = x = 1\cdot x$. We shall sometimes write~$xy$ for~$x\cdot y$. The variety $\Loop$ of loops is semi-abelian (as mentioned for instance in~\cite{Borceux-Clementino}). Loops are ``non-associative groups'', and indeed an associative loop is the same thing as a group. It is easily seen that the abelian objects in~$\Loop$ are precisely the abelian groups---which are not to be confused with the objects in the variety of commutative loops, which have a commutative, but possibly non-associative, multiplication.

The \defn{associator} of three elements $x$, $y$, $z$ of a loop $X$ is the unique element $\ldbrack x,y,z\rdbrack$ of $X$ such that $(xy)z=\ldbrack x,y,z\rdbrack\cdot x(yz)$. Hence $\ldbrack x,y,z\rdbrack$ is equal to $(xy\cdot z)\slash(x\cdot yz)$. Given three normal subloops $K$, $L$ and $M$ of $X$, we write~$\ldbrack K,L,M\rdbrack$ for the \defn{associator subloop} of $X$ determined by $K$, $L$ and~$M$: this is the normal subloop of~$K\join L\join M$ generated by the elements $\ldbrack x,y,z\rdbrack$, where either~$(x,y,z)$ or any of its permutations is in $K\times L\times M$. 

It is clear that the object $\ldbrack K,L,M\rdbrack$ is a subloop of the ternary commutator $[K,L,M]$, as for any associator element $\ldbrack x,y,z\rdbrack$, the associators $\ldbrack 1,y,z\rdbrack$, $\ldbrack x,1,z\rdbrack$ and $\ldbrack x,y,1\rdbrack$ are trivial (Example~\ref{Example-Varieties}). 

In order to prove that the category $\Loop$ does not satisfy the \emph{Smith is Huq} condition, it suffices to give an example of a loop $X$ with an abelian normal subloop~$A$ of~$X$ such that $[A,A,X]$ is non-trivial. Then by Theorem~\ref{Theorem-SH-1} the denormalisation~$R_{A}$ of~$A$ does not Smith-commute with itself, even though $[A,A]=0$. In fact, in our example, already the associator $\ldbrack A,A,X\rdbrack$ is non-trivial. (Universal algebraists have known about the bad behaviour of commutators in the category of loops for a long time. A different example is given in~\cite[Exercise~5.10]{Freese-McKenzie}.)

We take $X$ to be the well-known (and historically important) loop of order eight occurring in relation with the hyperbolic quaternions: it is the set
\[
\{1,-1,i,-i,j,-j,k, −k\}
\]
with multiplication determined by the rules
\begin{align*}
ij &= k = - ji\\
jk &= i = - kj \qquad\qquad ii = jj = kk =1\\
ki &= j = - ik
\end{align*}
and the expected behaviour for $-1$. The subset $\{1,-1,j,-j\}$ of $L$ forms a normal subloop $A$ of index two, isomorphic to the Klein four-group $V\cong \ZZ_{2}\times \ZZ_{2}$. Now $j\cdot ji = j(-k) = - i$ while $jj\cdot i = i$, so
\[
1\neq\ldbrack j,j,i\rdbrack\in \ldbrack A,A,X\rdbrack\leq [A,A,X].
\]
\end{example}

\subsection{Decomposition of the Smith commutator}
The above Theorem~\ref{Theorem-SH-1} leads to a formula for the Smith commutator of two equivalence relations in terms of binary and ternary commutators of their normalisations: Theorem~\ref{Theorem-Smith-Commutator}.

\begin{proposition}\label{Proposition-Normality}\cite{Actions,MM-NC}
In a semi-abelian category, for $K$,~${L\leq X}$, the subobject~$K$ is normal in $K \join L$ if and only if~$[K,L]\leq K$. In particular,
\begin{enumerate}
\item $K\normal X$ if and only if $[K,X]\leq K$;
\item a morphism $f\colon X\to Y$ is proper if and only if the composite morphism $c_2^Y\comp (f\tensor 1_{Y})$ factors through $\Im (f)$.\noproof
\end{enumerate}
\end{proposition}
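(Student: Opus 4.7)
The plan is to derive the whole statement at once from the normal closure identity recalled in Remark~\ref{Remark-Huq-Join}: in a semi-abelian category, the normal closure of $K$ in the join $K\join L$ is precisely $K\join[K,L]$. Since a subobject is normal exactly when it coincides with its own normal closure, this immediately gives
\[
K\normal K\join L\iff K\join[K,L]=K\iff [K,L]\leq K,
\]
which is the main statement.

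For part (i) I would take $L=X$ and observe that $K\leq X$ forces $K\join X=X$, so the main statement specialises to $K\normal X\iff [K,X]\leq K$. For part (ii), I would factor $f$ through its image as $f=\im(f)\comp p$ with $p\colon X\twoheadrightarrow \Im(f)$ a regular epimorphism, so that
\[
c_2^Y\comp (f\tensor 1_Y)=c_2^Y\comp(\im(f)\tensor 1_Y)\comp (p\tensor 1_Y).
\]
By Proposition~\ref{cr-reg-epi}, the morphism $p\tensor 1_Y$ remains a regular epimorphism, and so by uniqueness of image factorisations the image of $c_2^Y\comp (f\tensor 1_Y)$ coincides with the image of $c_2^Y\comp(\im(f)\tensor 1_Y)$, which by definition is the Higgins commutator $[\Im(f),Y]$. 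Hence $c_2^Y\comp(f\tensor 1_Y)$ factors through $\Im(f)$ if and only if $[\Im(f),Y]\leq \Im(f)$, which by (i) is equivalent to $\Im(f)\normal Y$, that is, to $f$ being proper.

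The real work therefore lies in the normal closure identity itself, which is where the semi-abelian hypothesis is used essentially (it needs direct images of normal monomorphisms along regular epimorphisms to remain normal); everything else above is either a direct specialisation or the standard trick of regularising a morphism onto its image. If one wished to prove the proposition without appealing to the normal closure formula, the natural alternative would be to establish (i) by hand---characterising $K\normal X$ via the existence of a restriction of the conjugation action $c^{X,X}\comp(k\tensor 1_X)$ through $k$---and then reduce the main statement to (i) applied to $K\leq K\join L$ by invoking the join distribution formula of Proposition~\ref{Proposition-Join}, together with $[K,K]\leq K$ and the removing-duplicates inequality $[K,K,L]\leq [K,L]$ from Proposition~\ref{Proposition-Commutator-Rules}(v), which collapse $[K,K\join L]\leq K$ to $[K,L]\leq K$. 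The main obstacle along that alternative route is the ``$\Leftarrow$'' direction of (i): reconstructing the normal quotient $X/K$ from the sole datum of the factorisation of the conjugation action through $k$, which is exactly what the normal closure identity encodes.
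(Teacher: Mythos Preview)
The paper gives no proof of this proposition: it is cited from \cite{Actions,MM-NC} and closed with \verb|\noproof|. So there is no ``paper's own proof'' to compare against.

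Your argument is correct. The main statement does follow immediately from the normal closure identity (the unnamed proposition following Remark~\ref{Remark-Huq-Join}), since a subobject is normal precisely when it coincides with its normal closure. Your derivations of (i) and (ii) are also sound: the reduction in~(ii) via $p\tensor 1_{Y}$ being a regular epimorphism (Proposition~\ref{cr-reg-epi}) correctly identifies the image of $c_2^Y\comp(f\tensor 1_Y)$ with $[\Im(f),Y]$, after which (i) applies.

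One cautionary note: both the normal closure identity and the present proposition are imported from the same source~\cite{Actions}, and in that reference the normal closure formula is in fact deduced \emph{from} the characterisation of normality you are trying to prove here. So while your reduction is formally valid within this paper---you are deriving one cited black box from another---it is not an independent proof. Your alternative route is the one that avoids this: establish~(i) directly (the $\Rightarrow$ direction is Proposition~\ref{conjnormsub}; the $\Leftarrow$ direction needs that in a semi-abelian category one can form the quotient $X/K$ once the conjugation action restricts to~$K$, which is where exactness is genuinely used), and then obtain the general statement from~(i) applied to $K\leq K\vee L$ via Proposition~\ref{Proposition-Join} and Proposition~\ref{Proposition-Commutator-Rules}(v) exactly as you outline.
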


\begin{remark}
\cite{Actions} The characterisation (i) of normal subobjects is valid in a finitely cocomplete homological category if and only if this category is semi-abelian.
\end{remark}

\begin{lemma}[cf.\ Remark~\ref{Remark-Huq-Join}]\label{Lemma-Join}
For any $K$, $L\leq X$ in a semi-abelian category, the join $[K,L,X]\join [K,L]$ is normal in $X$.
\end{lemma}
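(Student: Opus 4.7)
The plan is to apply the characterisation of normal subobjects given in Proposition~\ref{Proposition-Normality}(i): a subobject $N\leq X$ is normal in $X$ if and only if $[N,X]\leq N$. So, writing $N=[K,L,X]\join [K,L]$, it suffices to show that $[N,X]\leq N$, and in fact I expect the stronger inclusion $[N,X]\leq [K,L,X]$.

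First I would expand $[N,X]=[[K,L,X]\join [K,L],X]$ using the distribution rule of Proposition~\ref{Proposition-Join}, which gives
\[
[N,X]=[[K,L,X],X]\;\join\;[[K,L],X]\;\join\;[[K,L,X],[K,L],X].
\]
Then I would bound each of the three summands using Proposition~\ref{Proposition-Commutator-Rules}. For the first, item~(iv) (removing brackets) yields $[[K,L,X],X]\leq [K,L,X,X]$, and then item~(v) (removing adjacent duplicates) gives $[K,L,X,X]\leq [K,L,X]$. For the second, item~(iv) directly gives $[[K,L],X]\leq [K,L,X]$. For the third, item~(iv) applied twice gives $[[K,L,X],[K,L],X]\leq [K,L,X,K,L,X]$; then, using the symmetry of commutators (item~(i)) to reorder as $[K,K,L,L,X,X]$ followed by three applications of item~(v), I obtain a chain of inclusions ending in $[K,L,X]$.

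Combining the three bounds yields $[N,X]\leq [K,L,X]\leq N$, which by Proposition~\ref{Proposition-Normality}(i) concludes that $N\normal X$. The proof is essentially a bookkeeping exercise in the commutator calculus of Proposition~\ref{Proposition-Commutator-Rules} together with the join decomposition of Proposition~\ref{Proposition-Join}; the main thing to watch for is that semi-abelianness is used (via Proposition~\ref{Proposition-Normality}(i)) to pass from the containment $[N,X]\leq N$ to the normality of $N$, but this is given by hypothesis.
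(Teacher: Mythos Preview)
Your proof is correct, but the route differs from the paper's. You work directly in $X$: you expand $[N,X]$ via the join distribution rule of Proposition~\ref{Proposition-Join} and then bound each of the three resulting terms inside $[K,L,X]$ using the commutator calculus of Proposition~\ref{Proposition-Commutator-Rules}. The paper instead first observes that $[K,L,X]\normal X$ by Proposition~\ref{Proposition-Commutator-Rules}(vi), passes to the quotient $q\colon X\to Q=X/[K,L,X]$, and reduces the claim to showing that $[q(K),q(L)]\normal Q$; this last step needs only the single inequality $[[q(K),q(L)],Q]\leq [q(K),q(L),Q]=q[K,L,X]=0$ together with Proposition~\ref{Proposition-Normality}. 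In effect, the paper's quotient trick kills the first and third terms of your join decomposition at the outset, so only the analogue of your middle bound $[[K,L],X]\leq[K,L,X]$ survives. Your argument is more elementary in that it avoids the quotient and the correspondence of normal subobjects under it, at the cost of a slightly longer computation; the paper's argument is shorter but relies on that extra piece of semi-abelian machinery. Both ultimately hinge on Proposition~\ref{Proposition-Normality}(i), which is where the semi-abelian hypothesis is genuinely used.
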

\begin{proof}
Consider first the quotient $q$ of $X$ by $[K,L,X]$, then the direct image of~$[K,L]$ along $q$.
\[
\xymatrix{&& [K,L] \ar@{{ >}->}[d] \ar@{-{ >>}}[r] & [q(K),q(L)] \ar@{{ >}->}[d]\\
0 \ar[r] & [K,L,X] \ar@{{ |>}->}[r] & X \ar@{-{ >>}}[r]_-{q} & Q \ar[r] & 0}
\]
Note that $[K,L,X]$ is normal in $X$ by Proposition~\ref{Proposition-Commutator-Rules}.vi. To prove our claim we only need to show that the commutator $[q(K),q(L)]$ is normal in~${Q=q(X)}$. But
\[
[[q(K),q(L)],q(X)]\leq [q(K),q(L),q(X)] = q[K,L,X] = 0
\]
by Proposition~\ref{Proposition-Commutator-Rules} so that the result follows from Proposition~\ref{Proposition-Normality}.
\end{proof}

\begin{remark}\label{Remark-Smaller-M}
If we now consider $M\leq X$ such that ${K\join L\join M}$ is~$X$ then 
\[
[K,L,M]\join [K,L]=[K,L,X]\join [K,L].
\]
Indeed, freely using the rules from Proposition~\ref{Proposition-Commutator-Rules}, we see that
\begin{align*}
[K,L,K\join L \join M] \quad
= \quad & [K,L,K,L,M]\join [K,L,K,L]\join [K,L,L,M]\\
 &\join [K,L,K,M]\join [K,L,K]\join [K,L,L]\join [K,L,M] \\
\leq \quad & [K,L,M]\join [K,L]\join [K,L,M]\\
& \join [L,K,M]\join [L,K]\join [K,L]\join [K,L,M] \\
= \quad & [K,L,M]\join [K,L],
\end{align*}
while the other inclusion is obvious.
\end{remark}

\begin{remark}\label{Remark-Join}
If $K$, $L\normal X$ are such that $K \join L=X$ then $[K,L]=0$ suffices for the denormalisations $R$ of $K$ and $S$ of $L$ to commute in the Smith-sense~\cite{EverVdLRCT}. In other words, when $[K,L]$ is trivial, the ternary commutator $[K,L,X]$ is trivial as well. By Remark~\ref{Remark-Smaller-M} this also follows from
\[
[K,L,X]\join [K,L]=[K,L,0]\join [K,L]=[K,L].
\]
\end{remark}

\begin{theorem}\label{Theorem-Smith-Commutator}
In a semi-abelian category, given equivalence relations $R$ and~$S$ on $X$ with normalisations $K$, $L\normal X$, the Smith commutator $[R,S]^{\Smith}$ is the left-hand side equivalence relation
\begin{equation*}\label{Smith}
\xymatrix@!0@=10em{{([K,L,X]\join [K,L]) \rtimes_{\gamma} X} \ar@<1ex>[r]^-{\somm{0}{0}{1_{X}}} \ar@<-1ex>[r]_-{\somm{[k,l,1_{X}]}{[k,l]}{1_{X}}} & X \ar[l]|-{s_{\gamma}}}
\qquad \qquad
\xymatrix@!0@=6em{{[K,L] \rtimes_{\gamma} X} \ar@<1ex>[r]^-{\som{0}{1_{X}}} \ar@<-1ex>[r]_-{\bigsom{[k,l]}{1_{X}}} & X \ar[l]|-{s_{\gamma}}}
\end{equation*}
where $\gamma$ is the conjugation action of $X$ on $[K,L,X]\join [K,L]$. If~${K\vee L =X}$ then~$[R,S]^{\Smith}$ simplifies to the above right-hand side equivalence relation.
\end{theorem}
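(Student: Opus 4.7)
The plan is to identify the normalisation of $[R,S]^{\Smith}$ with the object $N := [K,L,X] \join [K,L]$, and then to realise $[R,S]^{\Smith}$ itself as the kernel pair of the quotient $q \colon X \to X/N$, packaged in the semi-direct-product form given in the statement.

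First, $N$ is normal in $X$ by Lemma~\ref{Lemma-Join}. By definition $[R,S]^{\Smith}$ is the smallest equivalence relation $T$ on $X$ whose quotient makes the images of $R$ and $S$ Smith-commute. Since in a semi-abelian category effective equivalence relations on $X$ are in monotone bijection with normal subobjects, this amounts to determining the smallest $N' \normal X$ such that the images of $R$ and $S$ Smith-commute in $X/N'$. The normalisations of those images are the direct images $q_{N'}(K)$ and $q_{N'}(L)$. Applying Theorem~\ref{Theorem-SH-1} inside $X/N'$, commutation becomes $[q_{N'}(K), q_{N'}(L)] = 0 = [q_{N'}(K), q_{N'}(L), X/N']$, which by Proposition~\ref{Proposition-Commutator-Rules}.ii is the same as $q_{N'}[K,L] = 0$ and $q_{N'}[K,L,X] = 0$, i.e.\ $[K,L] \leq N'$ and $[K,L,X] \leq N'$. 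The smallest solution is $N' = N$, so the normalisation of $[R,S]^{\Smith}$ is $N$, and $[R,S]^{\Smith}$ is the kernel pair of $q$.

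Next I would identify this kernel pair with $N \rtimes_\gamma X$, where $\gamma$ is the conjugation action of $X$ on $N$. This is a standard protomodular argument: the split short exact sequence
\[
\xymatrix@=3em{0 \ar[r] & N \ar@{{ |>}->}[r] & X \times_{X/N} X \ar@<.5ex>@{-{ >>}}[r]^-{\pi_2} & X \ar@<.5ex>@{{ >}->}[l]^-{\Delta_X} \ar[r] & 0}
\]
corresponds via the equivalence between split extensions and actions to conjugation, and under the resulting isomorphism $X \times_{X/N} X \cong N \rtimes_\gamma X$ the two projections become $\bigsom{n}{1_X}$ and $\som{0}{1_X}$, where $n \colon N \to X$ is the inclusion. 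Since $N = [K,L,X] \join [K,L]$ is by construction the image of $\som{[k,l,1_X]}{[k,l]} \colon (K \tensor L \tensor X) + (K \tensor L) \to X$, the inclusion $n$ factors into the two components $[k,l,1_X]$ and $[k,l]$, and reading $\bigsom{n}{1_X}$ and $\som{0}{1_X}$ through this presentation yields precisely the three-row columns $\somm{[k,l,1_X]}{[k,l]}{1_X}$ and $\somm{0}{0}{1_X}$ of the statement.

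For the simplification when $K \join L = X$, I would apply Remark~\ref{Remark-Smaller-M} with $M = 0$, so that $K \join L \join M = X$: this gives $[K,L,X] \join [K,L] = [K,L,0] \join [K,L] = [K,L]$ using Proposition~\ref{Proposition-Commutator-Rules}(o). Thus $N = [K,L]$, the two-component presentation of $N$ collapses to a single one, and the three-row columns collapse to the two-row columns of the simpler formula. The main obstacle is the middle step: one must be careful with the orientation conventions for the two projections of the kernel pair versus the two canonical maps out of the semi-direct product, and then propagate the two-component presentation of $N$ through correctly. The remainder is a direct application of Theorem~\ref{Theorem-SH-1}, Lemma~\ref{Lemma-Join}, and Proposition~\ref{Proposition-Commutator-Rules}.
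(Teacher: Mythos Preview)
Your proposal is correct and follows essentially the same approach as the paper's proof, which is very terse: it simply observes that the displayed equivalence relation is the denormalisation of $[K,L,X]\join [K,L]$ (normal by Lemma~\ref{Lemma-Join}), that by Theorem~\ref{Theorem-SH-1} it satisfies the same universal property as $[R,S]^{\Smith}$, and that the simplification is Remark~\ref{Remark-Join}. You have unpacked each of these steps---the universal-property argument via direct images and Proposition~\ref{Proposition-Commutator-Rules}(ii), the explicit identification of the kernel pair as a semi-direct product, and the use of Remark~\ref{Remark-Smaller-M} with $M=0$ (which is precisely the content of Remark~\ref{Remark-Join})---so your proof is a fleshed-out version of the same argument.
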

\begin{proof}
The equivalence relation in the statement above is the denormalisation of the normal subobject $[K,L,X]\join [K,L]$ of $X$ considered in Lemma~\ref{Lemma-Join}. By Theorem~\ref{Theorem-SH-1} it satisfies the same universal property as $[R,S]^{\Smith}$, hence the two coincide. The further refinement is just Remark~\ref{Remark-Join}.
\end{proof}

\subsection{An application to homology}\label{Subsection-Homology}
One situation where expressing the Smith commutator in terms of tensor products yields immediate results is in semi-abelian homology. For instance, according to~\cite{EGVdL} the Hopf formula for the third homology object $\H_{3}(Z,\ab)$ of an object $Z$ with coefficients in the abelianisation functor
\[
\ab\colon\A\to\Ab(\A)\colon A\mapsto A/[A,A]^{\Huq}
\]
depends on a characterisation of the double central extensions in $\A$.
Such a characterisation was given in~\cite{RVdL} in terms of the Smith commutator: a double extension such as~\eqref{Double-Extension} below is central if and only if
\[
[R,S]^{\Smith}=\Delta_{X}=[R\meet S,\nabla_{X}]^{\Smith}.
\]
Here $\nabla_{X}$ is the largest equivalence relation on $X$, the denormalisation of $1_{X}$, and $R$ and $S$ are the kernel relations of $d$ and $c$, respectively. If (SH) holds then this condition may be reformulated in terms of the Huq commutator, and when $\A$ has enough projectives this makes it possible to express $\H_{3}(Z,\ab)$ as a quotient of commutators. So far, however, it was unclear how to obtain a similar explicit formula in categories that do not satisfy (SH).

Recall that a \defn{double extension} in a semi-abelian category $\A$ is a pushout square~\eqref{Double-Extension} of which all arrows are regular epimorphisms~\cite{EGVdL}. A \defn{double presentation} of an object $Z$ is a double extension such as~\eqref{Double-Extension} in which the objects $X$, $D$ and $C$ are (regular epi)-projective. Higher extensions were introduced in~\cite{EGVdL} following~\cite{Janelidze:Double} and~\cite{Janelidze-Kelly} in order to capture the concept of \emph{higher centrality} which is useful in the study of semi-abelian (co)ho\-mo\-lo\-gy: see, for instance, the articles~\cite{EverHopf, EGVdL, RVdL2}.

\begin{proposition}\label{Characterisation-Double-Central-Extensions}
Given a double extension
\begin{equation}\label{Double-Extension}
\vcenter{\xymatrix{X \ar[r]^-{c} \ar[d]_-{d} & C \ar[d]^-{g}\\
D \ar[r]_-{f} & Z}}
\end{equation}
in a semi-abelian category, write $K=\Ker(c)$ and $L=\Ker(d)$. Then~\eqref{Double-Extension} is central if and only if
\[
[K,L,X]=[K,L]=[K\meet L,X]=0.
\]
\end{proposition}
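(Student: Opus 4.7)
The plan is to combine the characterisation of double central extensions in terms of the Smith commutator from \cite{RVdL} with the translation between Smith and (higher) Higgins commutators provided by Theorem~\ref{Theorem-SH-1}. Concretely, I would start by recalling that a double extension \eqref{Double-Extension} is central if and only if both $[R,S]^{\Smith}=\Delta_{X}$ and $[R\meet S,\nabla_{X}]^{\Smith}=\Delta_{X}$, where $R$ and $S$ are the kernel pairs of $d$ and $c$ respectively. The whole argument then consists in rewriting each of these two Smith-conditions as a condition on Higgins commutators of the corresponding normalisations.

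The next step is to identify the normalisations involved. Clearly $R$ normalises to $L=\Ker(d)$ and $S$ to $K=\Ker(c)$. The meet $R\meet S$ is the kernel pair of $\bigsom{d}{c}\colon X\to D\times C$, which normalises to $K\meet L$, and the largest equivalence relation $\nabla_{X}$ normalises to $X$. In the semi-abelian setting these equivalence relations are automatically effective, so Theorem~\ref{Theorem-SH-1} applies to each of the two commutators.

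Applying it to the first Smith commutator yields the equivalence
\[
[R,S]^{\Smith}=\Delta_{X}\quad\Longleftrightarrow\quad [K,L]=0=[K,L,X],
\]
while applying it to the second yields
\[
[R\meet S,\nabla_{X}]^{\Smith}=\Delta_{X}\quad\Longleftrightarrow\quad [K\meet L,X]=0=[K\meet L,X,X].
\]
To collapse the four equalities into the three stated in the proposition, I would invoke Proposition~\ref{Proposition-Commutator-Rules}(v), which tells us that $[K\meet L,X,X]\leq [K\meet L,X]$; hence the vanishing of $[K\meet L,X]$ already forces the ternary term to vanish, and the second Smith condition collapses to $[K\meet L,X]=0$. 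Combining the two translations gives exactly the three equalities in the statement.

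I do not anticipate a serious obstacle: the proposition is essentially a substitution of Theorem~\ref{Theorem-SH-1} into the known characterisation from~\cite{RVdL}. The only mild point requiring care is the correct identification of the normalisation of $R\meet S$ as $K\meet L$ and the observation that all equivalence relations in sight are effective, both of which are routine in a semi-abelian category.
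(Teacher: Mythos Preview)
Your proposal is correct and follows exactly the route the paper takes: the proof in the paper is the single sentence ``Via Theorem~\ref{Theorem-SH-1} this is an immediate consequence of~\cite[Theorem~2.8]{RVdL}'', and you have simply unpacked that sentence, including the small extra observation (via Proposition~\ref{Proposition-Commutator-Rules}(v), or equivalently Remark~\ref{Remark-Join}) that $[K\meet L,X,X]\leq [K\meet L,X]$ so the fourth vanishing condition is redundant.
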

\begin{proof}
Via Theorem~\ref{Theorem-SH-1} this is an immediate consequence of~\cite[Theorem~2.8]{RVdL}.
\end{proof}

\begin{theorem}\label{Theorem-H3}
Let $\A$ be a semi-abelian category with enough projectives. Let~$Z$ be an object in $\A$ and~\eqref{Double-Extension} a double presentation of $Z$ with $K=\Ker(c)$ and ${L=\Ker(d)}$. Then
\[
\H_{3}(Z,\ab)\cong\frac{K\meet L\meet [X,X]}{[K,L,X]\join [K,L]\join [K\meet L,X]}.
\]
If $\A$ is monadic over~$\Set$ then these homology groups are comonadic Barr--Beck ho\-mo\-lo\-gy~\cite{Barr-Beck} with respect to the canonical comonad on $\A$.
\end{theorem}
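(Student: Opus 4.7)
The plan is to combine the higher-dimensional Hopf formula from~\cite{EGVdL} with Proposition~\ref{Characterisation-Double-Central-Extensions}, our co-smash characterisation of double central extensions. Applied to~\eqref{Double-Extension}, the main theorem of~\cite{EGVdL} yields an isomorphism
\[
\H_{3}(Z,\ab)\cong \frac{K\meet L\meet [X,X]^{\Huq}}{M},
\]
where $M\normal X$ is the smallest normal subobject of $X$ whose quotient turns the induced double extension into a \emph{central} one, in the sense of Janelidze--Kelly~\cite{Janelidze-Kelly}, with respect to the abelianisation reflector $\ab$.

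To identify $M$ explicitly, invoke Proposition~\ref{Characterisation-Double-Central-Extensions}: centrality of~\eqref{Double-Extension} is equivalent to the simultaneous vanishing $[K,L,X]=[K,L]=[K\meet L,X]=0$. The join of these three commutators is normal in $X$ (the first two by Lemma~\ref{Lemma-Join}; the third because $K\meet L\normal X$ forces $[K\meet L,X]\normal X$ via Proposition~\ref{Proposition-Commutator-Rules}(vi)), and direct images of commutators along regular epimorphisms are governed by Proposition~\ref{Proposition-Commutator-Rules}(ii). It follows that the universal normal subobject whose quotient kills all three commutators is their join, i.e.\ $M=[K,L,X]\join [K,L]\join [K\meet L,X]$.

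For the numerator it suffices to observe that $[X,X]=[X,X]^{\Huq}$: by Remark~\ref{Remark-Huq-Join} we have $[X,X]^{\Huq}=[[X,X],X]\join [X,X]$, while Proposition~\ref{Proposition-Commutator-Rules}(iv)--(v) give $[[X,X],X]\leq [X,X,X]\leq [X,X]$. Substituting completes the proof of the displayed formula. The Barr--Beck assertion then follows because, when $\A$ is monadic over~$\Set$, the free-forgetful comonad produces projective resolutions of every object, and its derived functors agree with the homology computed from any such projective presentation by the standard comparison of~\cite{EGVdL}.

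The main obstacle is the first step: correctly interpreting the ``centralising subobject'' $M$ of~\cite{EGVdL}---defined abstractly via reflection into the Galois-theoretic subcategory of central double extensions---as the concrete join of commutators dictated by Proposition~\ref{Characterisation-Double-Central-Extensions}. Once the universal property of $M$ is matched to the universality of simultaneously killing the three commutator terms, the remaining identifications are straightforward consequences of the commutator calculus of Section~\ref{Section-Co-Smash}.
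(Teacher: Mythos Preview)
Your proposal is correct and follows essentially the same approach as the paper's own proof: combine Proposition~\ref{Characterisation-Double-Central-Extensions} with the abstract Hopf formula from~\cite{EverHopf} (the paper cites~\cite{EverHopf} primarily, with~\cite{EGVdL} as a secondary reference, whereas you cite~\cite{EGVdL} directly, but the relevant content is the same), and observe via Lemma~\ref{Lemma-Join} and Proposition~\ref{Proposition-Commutator-Rules}(vi) that the denominator is normal in~$X$. You have simply spelled out in more detail the identification of the centralising subobject~$M$ and the equality $[X,X]=[X,X]^{\Huq}$, both of which the paper leaves implicit.
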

\begin{proof}
This follows from Proposition~\ref{Characterisation-Double-Central-Extensions} and the main result of~\cite{EverHopf}; see also~\cite{EGVdL}. Note that by Lemma~\ref{Lemma-Join} and Proposition~\ref{Proposition-Commutator-Rules}.vi, the denominator is indeed normal in~$X$ so that the formula makes sense.
\end{proof}

\begin{remark}
Note that in the groups case~\cite{Brown-Ellis} the ternary commutator in the above formula is invisible, as it is contained in $[K,L]$.
\end{remark}

\section{Internal crossed modules}\label{Section-Crossed-Modules}
Internal crossed modules were introduced in the article~\cite{Janelidze}. Here we study them from the viewpoint of co-smash products. We obtain a new characterisation which involves a higher coherence condition. This condition does not appear in any of the usual categories where crossed modules have been considered so far, such as groups, Lie algebras and associative algebras: it expresses the property (SH) needed to extend a star-multiplication to an internal category structure in arbitrary semi-abelian categories, or even finitely cocomplete homological ones---see~\cite{Janelidze, MFVdL}.

\subsection{Internal categories}
The analysis of the \emph{Smith is Huq} condition in terms of higher-order commutators yields new conditions for an internal reflexive graph to be an internal category (or, equivalently, an internal groupoid); cf.~\cite{Loday} for the equivalence between (i) and (ii) in the case of groups. 

\begin{theorem}\label{Theorem-Internal-Categories}
Consider an internal reflexive graph $(R,G,d,c,e)$ in a finitely cocomplete homological category.
\begin{equation*}\label{RG}
\xymatrix@!0@=5em{R \ar@<1ex>[r]^-{d} \ar@<-1ex>[r]_-{c} & G \ar[l]|-{e}}
\qquad\qquad
d\comp e = c\comp e = 1_{G}
\end{equation*}
The following are equivalent:
\begin{enumerate}
\item $(R,G,d,c,e)$ is an internal category;
\item $[\Ker(d),\Ker(c)]=0=[\Ker(d),\Ker(c),R]$;
\item $[\Ker(d),\Ker(c)]=0=[\Ker(d),\Ker(c),\Im(e)]$;
\item the morphism $c^{A,R}\colon{A\tensor R\to A}$ induced by the conjugation action of $R$ on~$A=\Ker(d)$ factors through $1_{A}\tensor c\colon{A\tensor R\to A\tensor G}$;
\item $c^{A,R}=(e\comp c)^{*}(c^{A,R})$.
\end{enumerate}
\end{theorem}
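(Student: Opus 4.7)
The plan is to prove the equivalences (ii)~\equiv~(iii)~\equiv~(iv)~\equiv~(v) by direct manipulation with co-smash products, and then to deduce (i)~\equiv~(ii) from Theorem~\ref{Theorem-SH-1} together with the classical fact that in a Mal'tsev---in particular, homological---category a reflexive graph is an internal category (equivalently, groupoid) precisely when its two kernel pairs $\mathrm{Eq}(d)$ and $\mathrm{Eq}(c)$ Smith-commute. Since $\Ker(d)$ and $\Ker(c)$ are the normalisations of those kernel pairs, Theorem~\ref{Theorem-SH-1} converts that Smith-commutation condition into~(ii).

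For (ii)~\equiv~(iii) one uses that $d$ is a split epimorphism with splitting $e$, so protomodularity gives $R=\Ker(d)\vee\Im(e)$. The direction (ii)~\implies~(iii) is trivial. Conversely, expanding
\[
[\Ker(d),\Ker(c),R]=[\Ker(d),\Ker(c),\Ker(d)\vee\Im(e)]
\]
by the distribution rule of Proposition~\ref{Proposition-Join}, every resulting summand is dominated---by the rules of Proposition~\ref{Proposition-Commutator-Rules}, in particular duplicate removal---by either $[\Ker(d),\Ker(c)]$ or $[\Ker(d),\Ker(c),\Im(e)]$, so the two vanishings in~(iii) force the full ternary commutator in~(ii) to vanish. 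For (iv)~\equiv~(v): since $c\comp e=1_G$, the morphism $1_A\tensor c$ is a regular epimorphism (Proposition~\ref{cr-reg-epi}) with section $1_A\tensor e$; hence $c^{A,R}$ factors through $1_A\tensor c$ if and only if it is invariant under the associated split idempotent $1_A\tensor(e\comp c)$, and unfolding the notation $(e\comp c)^{*}(c^{A,R})=c^{A,R}\comp(1_A\tensor ec)$ from Notation~\ref{Notation-Pulation} identifies this invariance with~(v).

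The substantive equivalence is (iii)~\equiv~(iv). Apply Proposition~\ref{Split-Exact} to the split short exact sequence $\Ker(c)\to R\to G$ (with $\del=\ker(c)$, $p=c$, $s=e$), taking the exterior factor to be $X=A=\Ker(d)$. This produces a split exact sequence
\[
(A\tensor\Ker(c)\tensor G)\rtimes(A\tensor\Ker(c))\xrightarrow{\alpha} A\tensor R\xrightarrow{1_A\tensor c} A\tensor G\to 0,
\]
whose leftmost morphism $\alpha=\bigsom{S_{1,2}^{A,R}\comp(1_A\tensor\ker(c)\tensor e)}{1_A\tensor\ker(c)}$ has image equal to the kernel of the regular epimorphism $1_A\tensor c$. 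Hence~(iv) is equivalent to $c^{A,R}\comp\alpha=0$, which by evaluation on the two summands of the semi-direct product splits into the two conditions
\[
c^{A,R}\comp(1_A\tensor\ker(c))=0\qquad\text{and}\qquad c^{A,R}_{1,2}\comp(1_A\tensor\ker(c)\tensor e)=0.
\]
Post-composing each with the monomorphism $\ker(d)\colon A\to R$ and using $\ker(d)\comp c^{A,R}=c^{R,R}\comp(\ker(d)\tensor 1_R)$ (Proposition~\ref{conjnormsub}) together with Lemma~\ref{c21=c12} and the naturality of $S_{1,2}^{A,R}$ to obtain $\ker(d)\comp c^{A,R}_{1,2}=c_3^R\comp(\ker(d)\tensor 1_R\tensor 1_R)$, the two displayed equations become, by the very definition of the Higgins commutators in Definition~\ref{comdef}, the vanishings of $[\Ker(d),\Ker(c)]$ and of $[\Ker(d),\Ker(c),\Im(e)]$ respectively.

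The step I expect to be the main obstacle is this last identification: tracking the naturality of the folding operator $S_{1,2}^{A,R}$ so that $\ker(d)\comp c^{A,R}_{1,2}\comp(1_A\tensor\ker(c)\tensor e)$ really coincides with $c_3^R\comp(\ker(d)\tensor\ker(c)\tensor e)$, whose image in $R$ is by definition the ternary commutator $[\Ker(d),\Ker(c),\Im(e)]\leq R$. Once this coherence calculation is settled, the theorem reduces to bookkeeping built on Proposition~\ref{Split-Exact}, Proposition~\ref{Proposition-Join} and Theorem~\ref{Theorem-SH-1}.
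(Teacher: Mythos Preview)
Your proposal is correct and follows essentially the same route as the paper's proof: (i)~\equiv~(ii) via Theorem~\ref{Theorem-SH-1} and Pedicchio's characterisation of internal groupoids, (ii)~\equiv~(iii) via the join decomposition $R=\Ker(d)\vee\Im(e)$ and the commutator calculus of Propositions~\ref{Proposition-Commutator-Rules} and~\ref{Proposition-Join}, (iii)~\equiv~(iv) via Proposition~\ref{Split-Exact}, and (iv)~\equiv~(v) via the split idempotent $1_A\tensor(e\comp c)$. The naturality check you flagged as the potential obstacle is unproblematic: $(\ker(d)\tensor 1_R)\comp S_{1,2}^{A,R}=S_{1,2}^{R,R}\comp(\ker(d)\tensor 1_R\tensor 1_R)$ is immediate from the definition of the folding operations in Notation~\ref{Folding Operations}, and combined with Lemma~\ref{c21=c12} this yields exactly the identification you need; the paper records the step (iii)~\equiv~(iv) in a single line, but your unpacking of it is accurate.
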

\begin{proof}
Theorem~\ref{Theorem-SH-1} implies that (i) and (ii) are equivalent, because the given reflexive graph is a groupoid if and only if the kernel pairs of~$d$ and~$c$ Smith-commute~\cite{Pedicchio}. It is clear that (ii) implies (iii), while the equivalence between (i) and (iii) may be obtained via Lemma~\ref{Lemma-Nelson}. In fact, (ii) also follows from (iii) by a direct commutator calculation using Proposition~\ref{Proposition-Commutator-Rules}, since~${R=A\join \Im(e)}$. 

The equivalence between (iii) and (iv) is a consequence of Proposition~\ref{Split-Exact}. Finally, if $c^{A,R}=c^{*}(\varphi)$ then
\[
e^{*}(c^{A,R})=e^{*}(c^{*}(\varphi))=(c\comp e)^{*}(\varphi)=\varphi,
\]
so that $c^{A,R}=c^{*}(e^{*}(c^{A,R}))=(e\comp c)^{*}(c^{A,R})$.
\end{proof}

Condition (ii) on commuting kernels says that a reflexive graph $(R,G,d,c,e)$ with a multiplication $m\colon{\Ker(d)\times\Ker(c)\to R}$ defined locally around $0$ as in
\[
\vcenter{\xymatrix@1@!0@R=2.4495em@C=1.4142em{& {0} \ar[ld]_-{\beta}\\
{\cdot} && {\cdot} \ar[lu]_-{\alpha} \ar@{.>}[ll]^-{\gamma}}}
\qquad\qquad
m(\beta,\alpha)=\gamma
\]
such that $m\comp\langle 1_{\Ker(d)},0\rangle=\ker (d)$ and $m\comp\langle 0,1_{\Ker(c)}\rangle=\ker (c)$ admits a globally defined multiplication (that is, an internal category structure) if and only if the obstruction $[\Ker(d),\Ker(c),R]$ vanishes. Similar ``local to global'' properties were studied in~\cite{MM, MFVdL} after they appeared naturally in~\cite{Janelidze}. Since both are relevant in what follows, we briefly recall their definition; see~\cite{MM, MFVdL} and Remark~\ref{Remark-Peiffer} for more details and a proof that the structures are equivalent.

Consider a reflexive graph $(R,G,d,c,e)$ and the pullback 
\[
\vcenter{\xymatrix@C=3em{R\times_{G} \Ker(d) \ar@{}[rd]|<<{\pullback} \ar[d]_-{\pi_{R}} \ar[r]^-{\pi_{\Ker(d)}} & \Ker(d) \ar[d]^-{\del=c\circ\ker(d)}\\
R \ar[r]_-{d} & G.}}
\]
The reflexive graph $(R,G,d,c,e)$ is a \defn{star-multiplicative graph}~\cite{Janelidze} when there is a (necessarily unique) morphism $\varsigma\colon{R\times_{G} \Ker(d)\to \Ker(d)}$ such that the conditions $\varsigma \comp\langle \ker(d),0\rangle=1_{\Ker(d)}$ and $\varsigma \langle e\comp\del,1_{\Ker(d)}\rangle=1_{\Ker(d)}$ hold.
\[
\vcenter{\xymatrix@1@!0@R=2.4495em@C=1.4142em{& {\cdot} \ar[ld]_-{\beta}\\
{\cdot} && {0} \ar[lu]_-{\alpha} \ar@{.>}[ll]^-{\gamma}}}
\qquad
\zeta(\beta,\alpha)=\gamma
\qquad\qquad\qquad\qquad
\vcenter{\xymatrix@1@!0@R=2.4495em@C=1.4142em{& {0} \ar[ld]_-{\beta} \ar[rd]^-{\alpha}\\
{\cdot} && {\cdot} \ar@{.>}[ll]^-{\gamma}}}
\qquad
\omega(\beta,\alpha)=\gamma
\]
It is said to be a \defn{Peiffer graph}~\cite{MM} when there is a (necessarily unique) morphism $\omega\colon{\Ker(d)\times \Ker(d)\to R}$ such that $\omega\comp\langle 1_{\Ker(d)},0\rangle=\ker(d)$ and $\omega\comp\langle 1_{\Ker(d)},1_{\Ker(d)}\rangle =e\comp c\comp\ker(d)$.

\subsection{Precrossed modules and crossed modules}
A precrossed module is a normalisation of a reflexive graph, while a crossed module is a normalisation of an internal groupoid. We describe these structures in terms of co-smash products.

A \defn{precrossed module} in a finitely cocomplete homological category~$\A$ may be encoded as a quadruple $(G,A,\mu ,\del)$ where~$G$ and~$A$ are objects in $\A$, $\mu \colon{A\tensor G\to A}$ determines an action of $G$ on $A$, and $\del\colon{A \to G}$ is a $G$-equivariant morphism with respect to the action determined by $\mu $ and the conjugation action of $G$ on itself, respectively. In other words, the diagram
\begin{equation}\label{precrmod-dia}
\vcenter{\xymatrix{
A\tensor G \ar[r]^-\mu \ar[d]_{\del\tensor 1_{G}} & A\ar[d]^{\del}\\
G\tensor G\ar[r]_-{c^{G,G}} & G
}}
\end{equation}
commutes. Together with the obvious morphisms, the precrossed modules in $\A$ form a category~$\PXMod (\A)$.

\begin{proposition}\label{pxm-refl-graph}
The category $\PXMod(\A)$ is equivalent to $\RG(\A)$. 
\end{proposition}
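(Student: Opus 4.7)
The plan is to exhibit explicit mutually quasi-inverse functors between $\RG(\A)$ and $\PXMod(\A)$ and verify they are well-defined and natural.

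In one direction, send a reflexive graph $(R,G,d,c,e)$ to the quadruple $(G,A,\mu,\del)$ where $A=\Ker(d)$, $\del = c\comp\ker(d)\colon A\to G$, and $\mu\colon A\tensor G\to A$ is the morphism arising from the split short exact sequence
\[
\xymatrix@=3em{0 \ar[r] & A \ar@{{ |>}->}[r]^-{\ker(d)} & R \ar@{-{ >>}}@<.5ex>[r]^-{d} & G \ar@{{ >}->}@<.5ex>[l]^-{e} \ar[r] & 0}
\]
via the analysis in \ref{basic analysis}. To check that $(G,A,\mu,\del)$ is indeed a precrossed module, I would invoke Proposition~\ref{PU}: the morphism $c\colon R\to G$ satisfies $c\comp\ker(d) = \del$ and $c\comp e = 1_{G}$, and under the identification $R\cong A\rtimes_{\mu} G$ coming from~\ref{basic analysis}, its existence is equivalent to the commutativity of the square~\eqref{precrmod-dia}, using that $c^{G,G}=c_{2}^{G}$ and the fact (Example~\ref{Example-m=conj}) that $\mu=s_{\mu}^{*}(c^{A,R})$.

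In the other direction, send a precrossed module $(G,A,\mu,\del)$ to the reflexive graph $(R,G,d,c,e)$ with $R=A\rtimes_{\mu} G$, $d=p_{\mu}$, $e=s_{\mu}$, and with $c\colon A\rtimes_{\mu}G\to G$ the unique morphism provided by Proposition~\ref{PU} applied to $\del\colon A\to G$ and $1_{G}\colon G\to G$; the hypothesis needed by that proposition is precisely diagram~\eqref{precrmod-dia}. The relations $d\comp e=1_{G}$ and $c\comp e=1_{G}$ are immediate from $p_{\mu}\comp s_{\mu}=1_{G}$ and from the co-universal property.

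Then I would check that the two functors are quasi-inverse. Starting from $(G,A,\mu,\del)$, forming $R=A\rtimes_{\mu}G$ and then taking $\Ker(p_{\mu})=A$ with its induced action recovers $\mu$ by Example~\ref{Example-m=conj}, and $c\comp k_{\mu}=\del$ by construction. Conversely, starting from a reflexive graph $(R,G,d,c,e)$, the canonical decomposition $R\cong \Ker(d)\rtimes_{\mu}G$ from~\ref{basic analysis} is an isomorphism of split extensions, and under this isomorphism the second leg $c$ becomes $\som{\del}{1_{G}}$ because these two morphisms agree on $\Ker(d)$ and on the image of $e$, which are jointly (strongly epimorphic). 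Functoriality on morphisms is straightforward: a morphism of precrossed modules $(f,g)\colon(G,A,\mu,\del)\to(G',A',\mu',\del')$ induces $f\rtimes g\colon A\rtimes_{\mu}G\to A'\rtimes_{\mu'}G'$, which is compatible with all four structural morphisms; conversely a morphism of reflexive graphs restricts to the kernels of the domain maps and yields a morphism of precrossed modules.

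The main technical obstacle is the coherent use of the semi-direct product material from Section~\ref{Section-Semi-Direct-Products}: one must keep track of when a morphism $\mu\colon A\tensor G\to A$ genuinely induces an action (i.e.\ when $k_{\mu}$ is mono), which in the ``$\RG\to\PXMod$'' direction is automatic since $\mu$ comes from an actual split extension, and in the ``$\PXMod\to\RG$'' direction is already encoded in the definition of a precrossed module used here. Once this bookkeeping is set up, the verifications reduce to repeated applications of Proposition~\ref{PU} and of the universal properties of kernels and semi-direct products.
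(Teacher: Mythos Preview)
Your proposal is correct and follows essentially the same route as the paper: both rely on the action--split-epi correspondence of~\ref{basic analysis} to pass between $(d,e)$ and $\mu$, and on Proposition~\ref{PU} to show that the existence of a codomain map $c$ with $c\comp\ker(d)=\del$ and $c\comp e=1_{G}$ is equivalent to the commutativity of~\eqref{precrmod-dia}. The paper's proof is simply a terser version of yours, leaving the quasi-inverse verification and the functoriality to the reader; your mention of Example~\ref{Example-m=conj} is not needed, since Proposition~\ref{PU} already gives the square~\eqref{precrmod-dia} directly.
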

\begin{proof}
This is an extension of the equivalence between actions and split epimorphisms. Given a precrossed module $(G,A,\mu ,\del)$, the action $\mu $ corresponds to a split exact sequence
\begin{equation*}\label{RG+ker}
\xymatrix{0 \ar[r] & A \ar@{{ |>}->}[r]^-{\ker (d)} & R \ar@{-{ >>}}@<1ex>[r]^-{d} \ar@{.{ >>}}@<-1ex>[r]_-{c} & G \ar[r] \ar@{{ >}->}[l]|-{e} & 0}
\end{equation*}
where $R=A\rtimes_{\mu }G$. Proposition~\ref{PU} gives a unique morphism $c\colon{R\to G}$ such that~${\del=c\comp\ker (d)}$ and $c\comp e=1_{G}$ precisely when~\eqref{precrmod-dia} commutes.
\end{proof}

\begin{definition}\label{definition-xmod}
A precrossed module $(G,A,\mu ,\del)$ is a \defn{crossed module} if its associated reflexive graph is an internal category. This gives us the full reflective~\cite{Pedicchio} subcategory $\XMod(\A)$ of $\PXMod(\A)$. 
\end{definition}

Janelidze analysed this concept of crossed module using internal actions in semi-abelian categories~\cite{Janelidze}. Here the actions are treated differently, and thus we obtain a different characterisation, which is moreover valid in a non-exact context:

\begin{theorem}\label{Theorem-Characterisation-XMod}
A precrossed module $(G,A,\mu ,\del)$ in a finitely cocomplete homological category is a crossed module if and only if it satisfies the following two additional conditions:
\begin{enumerate}
\item the conjugation action of $A$ on itself coincides with the pullback of~$\mu$ along~$\del$, that is, $c^{A,A}=\del^{*}(\mu)$ so that the diagram
\begin{equation}\label{crmod-dia}
\vcenter{\xymatrix{
A\tensor A \ar[r]^-{c^{A,A}} \ar[d]_-{1_{A}\tensor \del} & A \ar@{=}[d] \\
A\tensor G \ar[r]_-{\mu} & A
}}
\end{equation}
commutes;
\item the diagram
\begin{equation}\label{crmod-coh-dia}
\vcenter{\xymatrix{
A\tensor A\tensor G \ar[r]^-{\mu_{2,1}} \ar[d]_-{1_{A}\tensor \del\tensor 1_{G}} & A \ar@{=}[d] \\
A\tensor G\tensor G \ar[r]_-{\mu_{1,2}} & A}}
\end{equation}
commutes.
\end{enumerate}
\end{theorem}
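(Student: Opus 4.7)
The plan is to reduce to Theorem~\ref{Theorem-Internal-Categories} via the correspondence between precrossed modules and reflexive graphs. By Proposition~\ref{pxm-refl-graph} and Definition~\ref{definition-xmod}, the precrossed module $(G,A,\mu,\del)$ is a crossed module if and only if the associated reflexive graph $(R,G,d,c,e)$, with $R=A\rtimes_{\mu}G$, $A=\Ker(d)$ and $\del=c\comp\ker(d)$, is an internal category. Among the equivalent characterisations in that theorem I would use condition~(iv): the morphism $c^{A,R}\colon A\tensor R\to A$ factors through $1_{A}\tensor c\colon A\tensor R\to A\tensor G$.

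The main tool is Proposition~\ref{Split-Exact} applied to the split exact sequence $0\to A\to R\to G\to 0$ determined by $\mu$: it presents $A\tensor R$ through three jointly epic morphisms, namely the splitting $1_{A}\tensor s_{\mu}\colon A\tensor G\to A\tensor R$, the canonical $1_{A}\tensor k_{\mu}\colon A\tensor A\to A\tensor R$, and $S_{1,2}^{A,R}\comp(1_{A}\tensor k_{\mu}\tensor s_{\mu})\colon A\tensor A\tensor G\to A\tensor R$. Since $1_{A}\tensor c$ is a regular epimorphism split by $1_{A}\tensor s_{\mu}$, any factor $\varphi\colon A\tensor G\to A$ of $c^{A,R}$ through $1_{A}\tensor c$ is unique and must equal $c^{A,R}\comp(1_{A}\tensor s_{\mu})=\mu$, by Example~\ref{Example-m=conj}. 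Hence the factorisation exists precisely when the equation $c^{A,R}=\mu\comp(1_{A}\tensor c)$ holds after precomposition with the remaining two morphisms.

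On the $A\tensor A$-piece, Example~\ref{Example-Normal} gives $c^{A,R}\comp(1_{A}\tensor k_{\mu})=c^{A,A}$, while the right-hand side equals $\mu\comp(1_{A}\tensor\del)=\del^{*}(\mu)$ since $c\comp k_{\mu}=\del$; this is exactly diagram~(i). On the ternary piece I would apply Lemma~\ref{c21=c12} with $n=k_{\mu}$ to write $c^{A,R}\comp S_{2,1}^{A,R}=c^{A,R}\comp S_{1,2}^{A,R}\comp(1_{A}\tensor k_{\mu}\tensor 1_{R})$, then precompose with $1_{A}\tensor 1_{A}\tensor s_{\mu}$ and use naturality of $S_{2,1}^{A,-}$ in its last argument together with $c^{A,R}\comp(1_{A}\tensor s_{\mu})=\mu$ to identify
\[
c^{A,R}\comp S_{1,2}^{A,R}\comp(1_{A}\tensor k_{\mu}\tensor s_{\mu})=\mu\comp S_{2,1}^{A,G}=\mu_{2,1}.
\]
A parallel computation, using naturality of $S_{1,2}^{A,-}$ and the identities $c\comp k_{\mu}=\del$ and $c\comp s_{\mu}=1_{G}$, shows that $\mu\comp(1_{A}\tensor c)\comp S_{1,2}^{A,R}\comp(1_{A}\tensor k_{\mu}\tensor s_{\mu})$ equals $\mu_{1,2}\comp(1_{A}\tensor\del\tensor 1_{G})$, so the condition on this piece is exactly diagram~(ii).

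The main technical point is the bookkeeping for the folding operations on the ternary piece—using Lemma~\ref{c21=c12} to trade $S_{1,2}^{A,R}$ for $S_{2,1}^{A,R}$ precisely so that precomposing with $s_{\mu}$ in the last slot brings us back into $A\tensor G$, and then matching the resulting $\mu_{2,1}$ with $\mu_{1,2}\comp(1_{A}\tensor\del\tensor 1_{G})$ on the other side. Once that calculation is in place, combining the three pieces shows that condition~(iv) of Theorem~\ref{Theorem-Internal-Categories} is equivalent to the conjunction of~(i) and~(ii), completing the proof.
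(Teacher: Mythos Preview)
Your argument is correct. You reduce to condition~(iv) of Theorem~\ref{Theorem-Internal-Categories} and use Proposition~\ref{Split-Exact} to obtain three jointly epic morphisms into $A\tensor R$; the computations on each piece are sound, in particular the use of Lemma~\ref{c21=c12} and the naturality of the folding operations to identify the ternary piece with diagram~\eqref{crmod-coh-dia}.

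The paper's own proof is a close variant: it works with condition~(v) of Theorem~\ref{Theorem-Internal-Categories}, comparing the parallel pair $c^{A,R}$ and $(e\comp c)^{*}(c^{A,R})$ rather than checking a factorisation, and it decomposes $A\tensor R$ via Lemma~\ref{Tensor vs sum} (through the regular epimorphism $1_{A}\tensor q\colon A\tensor(A+G)\to A\tensor R$) rather than via Proposition~\ref{Split-Exact}. The paper then remarks explicitly that one could alternatively use Sequence~\eqref{Sequence-Rightex-Split-cross} as in the proof of Lemma~\ref{Lemma-Nelson}, which is exactly what you do. Your route has the minor advantage that the candidate factor $\varphi=\mu$ is forced at the outset by precomposing with the splitting, so the three checks are literally equalities against $\mu\comp(1_{A}\tensor c)$; the paper's route trades this for the symmetry of comparing two canonically defined morphisms. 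Either way the bookkeeping on the ternary piece is the same.
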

\begin{proof}
Using Lemma~\ref{Tensor vs sum}, we decompose the object $R$ in such a way that the fifth condition of Theorem~\ref{Theorem-Internal-Categories} falls apart in three distinct statements. One of those is the commutativity of~\eqref{crmod-dia}, a second one is the commutativity of~\eqref{crmod-coh-dia}, and a third one is trivially satisfied.

Indeed, $R=A\rtimes_{\mu}G$, so that we may consider the pair of parallel morphisms
\[
\resizebox{\textwidth}{!}
{\xymatrix{((A\tensor A\tensor G)\rtimes (A\tensor A))\rtimes (A\tensor G) \ar@{-{ >>}}[r] & A\tensor (A+G) \ar@{-{ >>}}[r]^-{1_{A}\tensor q} & A\tensor (A\rtimes_{\mu}G) \ar@<.5ex>[rr]^-{c^{A,R}} \ar@<-.5ex>[rr]_-{(e\circ c)^{*}(c^{A,R})} && A.}}
\]
On $A\tensor G$ these morphisms coincide, as $q\comp \iota_{G}=e\colon G\to A\rtimes_{\mu}G=R$ by definition of $e$, and
\begin{align*}
(e\comp c)^{*}(c^{A,R})\comp (1_{A}\tensor e) &= e^{*}((e\comp c)^{*}(c^{A,R}))= (e\comp c\comp e)^{*}(c^{A,R})\\
&= e^{*}(c^{A,R}) = c^{A,R}\comp (1_{A}\tensor e).
\end{align*}
On $A\tensor A$ they coincide if and only if the diagram~\eqref{crmod-dia} commutes. To see this, recall that~$q=\som{\ker(d)}{e}\colon{A+G\to A\rtimes_{\mu}G=R}$, so that $q\comp \iota_{A}$ is the monomorphism~$\ker(d)\colon {A\to R}$. Then
\[
\ker(d)\comp c^{A,R}\comp (1_{A}\tensor \ker (d)) = \ker(d)\comp c^{A,A}
\]
by naturality of conjugation actions (Proposition~\ref{conjnormsub}), and
\begin{align*}
\ker(d)\comp (e\comp c)^{*}(c^{A,R})\comp (1_{A}\tensor \ker (d)) 
& = 
\ker(d)\comp c^{A,R}\comp (1_{A}\tensor (e\comp c))\comp(1_{A}\tensor \ker (d))\\
& = \ker(d)\comp c^{A,R}\comp (1_{A}\tensor e)\comp (1_{A}\tensor (c\comp\ker (d)))\\
& = \ker(d)\comp \mu\comp (1_{A}\tensor \del).
\end{align*}
Hence $c^{A,A} = \mu\comp (1_{A}\tensor \del)$ if and only if $c^{A,R}$ and $(e\comp c)^{*}(c^{A,R})$ coincide on $A\tensor A$.

Similarly, $c^{A,R}$ and $(e\comp c)^{*}(c^{A,R})$ coincide on $A\tensor A\tensor G$ precisely when~\eqref{crmod-coh-dia} commutes. For a proof, consider the commutative diagrams
\[
\xymatrix@!0@R=4em@C=9em{A\tensor A\tensor G \ar[d]_{\iota_{A,G}^{A\tensor (-)}} \ar[rd]^-{\iota_{A,A,G}} \ar[r]^{1_{A}\tensor 1_{A}\tensor e} & A\tensor A\tensor R \ar[rd]^-{\iota_{A,A,R}} \\ 
 A\tensor (A+G) \ar[r]_-{\iota_{A,A+G}} \ar[d]_-{1_{A}\tensor q} & A+A+G \ar[d]_-{1_{A}+q} \ar[r]^-{1_{A}+1_{A}+e} & A+A+R \ar@/^1em/[ld]|-{\somm{\iota_{A}}{\iota_{R}\circ\ker(d)}{\iota_{R}}} \ar@/^2em/[ddl]|-{\somm{\ker(d)}{\ker(d)}{1_{R}}} \ar[dd]^-{\ker(d)+\ker(d)+1_{R}} \\
A\tensor R \ar[d]_-{c^{A,R}} \ar^-{\iota_{A,R}}[r] & A+R \ar[d]_-{\som{\ker (d)}{1_{R}}} \\
A \ar@{{ |>}->}[r]_-{\ker (d)} & R & R+R+R \ar[l]^-{\nabla_{R}^{3}}}
\]
and
\[
\xymatrix@!0@R=4em@C=8em{
A\tensor A\tensor G \ar[r]^-{1_{A}\tensor 1_{A}\tensor e} \ar[d]^{S^{A,G}_{2,1}} \ar@/_2em/[dd]_-{\mu_{2,1}} & A\tensor A\tensor R \ar@/^1em/[rrd]^-{\iota_{A,A,R}} \ar[d]^-{S^{A,R}_{2,1}} \\
A\tensor G \ar[d]^-{\mu} \ar[r]^-{1_{A}\tensor e} & A\tensor R \ar[d]^-{c^{A,R}} \ar^-{\iota_{A,R}}[r] & A+R \ar[d]_-{\som{\ker (d)}{1_{R}}} & A+A+R \ar[d]^-{\ker(d)+\ker(d)+1_{R}} \ar[l]_-{\nabla_{A}+1_{R}}\\
A \ar@{=}[r] & A \ar@{{ |>}->}[r]_-{\ker (d)} & R & R+R+R \ar[l]^-{\nabla_{R}^{3}}}
\]
which show that $\mu_{2,1}=c^{A,R}\comp (1_{A}\tensor q)\comp \iota^{A\tensor (-)}_{A,G}$. Similar diagrams show that
\[
\mu_{1,2}\comp (1_{A}\tensor \del\tensor 1_{G})=(e\comp c)^{*}(c^{A,R})\comp (1\tensor q)\comp \iota^{A\tensor (-)}_{A,G},
\]
and these two equalities together are precisely what we need to prove our claim.
\end{proof}

Alternatively, in this proof we could have used Sequence~\eqref{Sequence-Rightex-Split-cross} as in the proof of Lemma~\ref{Lemma-Nelson}.

\begin{remark}\label{Remark-Peiffer}
Condition (i) could be called the \defn{Peiffer condition}. It means that the reflexive graph induced by $(G,A,\mu ,\del)$ is a Peiffer graph: the commutativity of~\eqref{crmod-dia} gives us a morphism of split short exact sequences
\[
\xymatrix@!0@R=3em@C=5em{0 \ar[r] & A \ar@{=}[d] \ar@{{ |>}->}[r]^-{\langle1_{A},0\rangle} & A\times A \ar@{-{ >>}}@<.5ex>[r]^-{\pi_{2}} \ar[d]_-{\omega} & A \ar[d]^-{\del} \ar[r] \ar@{{ >}->}@<.5ex>[l]^-{\langle1_{A},1_{A}\rangle} & 0\\
0 \ar[r] & A \ar@{{ |>}->}[r]_-{ \ker(d)} & R \ar@{-{ >>}}@<.5ex>[r]^-{d} & G \ar[r] \ar@{{ >}->}@<.5ex>[l]^-{e} & 0}
\]
as in Example~\ref{Example-Conjugation}. The conditions $ \ker(d)=\omega\comp \langle1_{A},0\rangle$ and $e\comp\del=\omega\comp\langle1_{A},1_{A}\rangle$ tell us that $\omega$ is a Peiffer structure on $(R,G,d,c,e)$. By Proposition~3.7 in~\cite{MFVdL} this is equivalent to the reflexive graph being star-multiplicative in the sense of~\cite{Janelidze}, or---when $\A$ is semi-abelian---the condition that $\ker (d)$ and $\ker (c)$ commute. 

The star-multiplication on $(R,G,d,c,e)$ may also be obtained directly from the commutativity of~\eqref{crmod-dia}. Indeed, via the co-universal property of semi-direct products (Proposition~\ref{PU}) we see that the needed morphism
\[
\zeta\colon{A\rtimes_{\del^{*}(\mu)} A=R\times_{G}A\to A}
\]
exists if and only if $\del^{*}(\mu)=c^{A,A}$.

Hence a semi-abelian category satisfies (SH) if and only if the coherence condition~(ii) always comes for free: every precrossed module that satisfies the Peiffer condition is a crossed module.

In a non-exact context this is not quite true. As explained in the last paragraph of~\cite{MFVdL}, in order that (SH) be equivalent to the condition ``all star-multiplications come from internal category structures'', a slight strengthening of the definitions of star-multiplicative graph and of Peiffer graph imposes itself. Thus asking that (ii) always follows from (i) in a finitely cocomplete homological category seems formally stronger than assuming (SH), as the Peiffer condition~(i) only gives a ``weak'' star-multiplication. 
\end{remark}

\begin{examples}\label{crmod-gr-alg}
In the case of augmented (= non-unitary) associative algebras we recover the definition of crossed modules due to Dedecker and Lue~\cite{Dedecker-Lue, Lue-Crossed} and Baues~\cite{Baues-Minian}, and in the case of Lie algebras the one considered by Kassel and Loday~\cite{Kassel-Loday}. Note, however, that in all these categories the coherence condition~\eqref{crmod-coh-dia} comes for free, because all of them have the \emph{Smith is Huq} property. So the description in terms of star-multiplicative graphs of~\cite{Janelidze} would have given the same result.
\end{examples}

\section{Beck modules}\label{Section-Beck}
As explained in~\cite{Bourn-Janelidze:Torsors}, there is a subtle difference between the concept of \defn{extension with abelian kernel}---any short exact sequence
\begin{equation}\label{exte}
\xymatrix{0 \ar[r] & A \ar@{{ |>}->}[r]^-{a} & X \ar@{-{ >>}}[r]^-{p} & G \ar[r] & 0}
\end{equation}
where the kernel $A$ is abelian---and the notion of \defn{abelian extension}, a regular epimorphism $p\colon{X\to G}$ which is an abelian object in the slice category $(\A\comma G)$. Since ``abelian object'' here means that $p$ admits an internal Mal'tsev operation, this amounts to the condition $[R,R]^{\Smith}=\Delta_{X}$ where $R$ is the kernel relation of $p$. It is clear that the difference between the two concepts is again an instance of the \emph{Smith is Huq} condition.

While abelian extensions are abelian objects in a slice category $(\A\comma G)$, \emph{Beck modules}~\cite{Beck, Barr-Beck} are abelian groups in $(\A\comma G)$ or, equivalently, abelian objects in the category of points $\Pt_{G}(\A)$. Hence from~\cite{Bourn-Janelidze:Semidirect, Bourn-Janelidze:Torsors} it follows immediately that modules are \emph{abelian actions}. In the present section we obtain a further refinement in terms of (higher-order) tensor products, valid in a context where \emph{Smith is Huq} need not hold.

Given an object $G$ of a finitely cocomplete homological category $\A$, a \defn{$G$-module} or \defn{Beck module over $G$} is an abelian group in the slice category~$(\A\comma G)$. Thus a $G$-module~$(p,m,s)$ consists of a morphism~$p\colon {X\to G}$ in $\A$, equipped with a multiplication~$m$ and a unit~$s$ as in the commutative triangles
\[
\vcenter{\xymatrix@!0@R=3em@C=2em{X\times_{G}X \ar[rr]^-{m} \ar[rd]_-{p_{\times}} && X \ar[ld]^-{p} \\ & G}}
\qquad
\qquad
\vcenter{\xymatrix@!0@R=3em@C=2em{G \ar[rr]^-{s} \ar@{=}[rd] && X \ar[ld]^-{p} \\ & G}}
\]
satisfying the usual axioms. (Here we write $X\times_{G}X$ for the kernel pair of~$p$, and we put $p_{\times}=p\comp m=p\comp \pi_{1}=p\comp \pi_{2}$.) In particular we obtain a split short exact sequence
\begin{equation}\label{sses}
\vcenter{\xymatrix{0 \ar[r] & A \ar@{{ |>}->}[r]^-{\ker(p)} & X \ar@<.5ex>@{ >>}[r]^-{p} & G \ar[r] \ar@{{ >}->}@<.5ex>[l]^-{s} & 0}}
\end{equation}
where $A$ is an abelian object in $\A$ and $p$ is split by $s$. Furthermore, since as an abelian extension it carries an internal Mal'tsev operation, the morphism $p$ satisfies $[X\times_{G}X,X\times_{G}X]^{\Smith}=\Delta_{X}$. Conversely, given the splitting $s$ of~$p$, this latter condition makes it possible to recover the multiplication $m$. We write~$\Mod_{G}(\A)$ for the category~${\Ab(\A\comma G)=\Mal(\Pt_{G}(\A))}$ of $G$-modules in $\A$. 

\begin{examples}\cite{Beck}
In the category $\Gp$, a Beck module over $G$ is the same thing as a classical module over the group-ring $\ZZ G$. When $\A$ is an additive category, the kernel functor determines an equivalence $\Mod_{G}(\A)\simeq \A$. In the category $\CAlg_{\KK}$ of commutative (non-unitary) algebras over a commutative ring $\KK$, a Beck module over~$G$ is a~$G$-module with a trivial multiplication. It is worth considering this latter example more in detail.

An internal $G$-action on an object $A$ is a morphism $\xi\colon{G\flat A\to A}$. Now here, $G\flat A\cong A+(A\tensor_{K} G)$ and $\xi\comp \iota_{A}=1_{A}$. So the restriction $\psi\colon{A\tensor_{K} G\to A}$ of $\xi$ to the tensor product $A\tensor_{K} G$ is nothing but the usual presentation of a $G$-module structure on $A$.
\end{examples}

\begin{theorem}\label{Theorem-Beck}
Let $A$ be an abelian object endowed with a $G$-action determined by $\psi\colon A\tensor G \to A$. Then the following are equivalent:
\begin{enumerate}
\item $(A,\psi)$ is a $G$-module;
\item $(G,A,\psi,0)$ is a crossed module;
\item $\psi_{2,1}\colon{A\tensor A\tensor G\to A}$ is trivial.
\end{enumerate}
\end{theorem}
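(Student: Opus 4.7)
The plan is to deduce the theorem in two steps: (ii) $\Leftrightarrow$ (iii) as a direct specialisation of Theorem~\ref{Theorem-Characterisation-XMod} to the case $\partial = 0$, and (i) $\Leftrightarrow$ (iii) by way of the Smith commutator via Theorem~\ref{Theorem-SH-1}.

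For the first of these, setting $\partial = 0$ in the three conditions of Theorem~\ref{Theorem-Characterisation-XMod} makes two of them automatic and reduces the third to (iii). Indeed, the precrossed-module diagram~(\ref{precrmod-dia}) has both legs equal to zero; the Peiffer diagram~(\ref{crmod-dia}) becomes $c^{A,A} = \psi \comp (1_A \tensor 0) = 0$, which holds by Remark~\ref{abobj} since $A$ is abelian; and the coherence diagram~(\ref{crmod-coh-dia}) becomes $\psi_{2,1} = \psi_{1,2} \comp (1_A \tensor 0 \tensor 1_G) = 0$. Thus $(G,A,\psi,0)$ is a crossed module precisely when $\psi_{2,1} = 0$.

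For the second, I would use that, as recalled at the start of the section, a Beck module on $(A,\psi)$ is exactly an abelian action; concretely, this means the associated point $(p_\psi, s_\psi)$ is abelian in $(\A\comma G)$, which in our homological Mal'tsev setting amounts to $[R,R]^{\Smith} = \Delta_X$ for $R$ the kernel pair of $p_\psi$ and $X = A\rtimes_\psi G$. The normalisation of $R$ is $A$, so Theorem~\ref{Theorem-SH-1} converts this to $[A,A] = 0 = [A,A,X]$; and since $A$ is abelian, the first half holds automatically. The task thus reduces to showing $[A,A,X] = 0 \Leftrightarrow \psi_{2,1} = 0$.

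This final identification is carried out in two moves. First, because $X = A \vee \Im(s_\psi)$ and $[A,A] = 0$, Remark~\ref{Remark-Smaller-M} gives $[A,A,X] = [A,A,\Im(s_\psi)]$. Second, using that $G \to \Im(s_\psi)$ is a regular epimorphism and that $\tensor$ preserves regular epimorphisms (Proposition~\ref{cr-reg-epi}), this last commutator is the image of the composite $\somm{k_\psi}{k_\psi}{s_\psi} \comp \iota_{A,A,G}\colon A\tensor A\tensor G \to X$. A short diagram chase---combining the defining equation $k_\psi \comp \psi = \bigsom{k_\psi}{s_\psi} \comp \iota_{A,G}$ of the semi-direct product with the equality $\iota_{A,G} \comp S_{2,1}^{A,G} = (\nabla_A + 1_G) \comp \iota_{A,A,G}$ from Notation~\ref{Folding Operations}---rewrites this composite as $k_\psi \comp \psi_{2,1}$. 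Since $k_\psi$ is a monomorphism, $[A,A,X]$ vanishes if and only if $\psi_{2,1}$ does. The main obstacle, as I see it, is precisely this last calculation: none of the individual steps is deep, but the bookkeeping between $\iota_{A,A,G}$, $S_{2,1}^{A,G}$ and the semi-direct product quotient must be handled with care.
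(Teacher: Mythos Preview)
Your argument is correct, and your treatment of (ii)\,$\Leftrightarrow$\,(iii) is exactly the paper's. Where you diverge is in linking (i) with the rest: the paper obtains (i)\,$\Leftrightarrow$\,(ii) in one line by observing that the reflexive graph attached to the precrossed module $(G,A,\psi,0)$ is precisely $(X,G,p_\psi,p_\psi,s_\psi)$---indeed, the codomain map $c$ is determined by $c\comp\ker(d)=\del=0$ and $c\comp e=1_G$, which forces $c=p_\psi$---and this graph is a groupoid iff the kernel pair $R$ of $p_\psi$ Smith-commutes with itself, which is the very definition of $(A,\psi)$ being a $G$-module. So (i)\,$\Leftrightarrow$\,(ii) is essentially tautological once one identifies the graph. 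You instead prove (i)\,$\Leftrightarrow$\,(iii) directly via Theorem~\ref{Theorem-SH-1} and a hands-on identification of $[A,A,X]$ with $\Im(k_\psi\comp\psi_{2,1})$; this is a valid route, but it partly re-derives the content of Theorem~\ref{Theorem-Characterisation-XMod} rather than invoking it, making the argument longer than necessary. On the other hand, your computation has the merit of exhibiting $[A,A,X]$ explicitly as the image of $\psi_{2,1}$ (up to the monomorphism $k_\psi$), a fact the paper leaves implicit.
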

\begin{proof}
Let~\eqref{sses} be the split short exact sequence induced by $\psi$. Then $(A,\psi)$ is a $G$-module if and only if the reflexive graph
\[
\xymatrix@!0@=5em{X \ar@<1ex>[r]^-{p} \ar@<-1ex>[r]_-{p} & G \ar[l]|-{s}}
\]
is an internal category. Since $p\comp\ker(p)=0$ this proves (i)~\equiv~(ii). 

Since $A$ is abelian, already $[A,A]=0$. So Theorem~\ref{Theorem-Characterisation-XMod} tells us that Condition~(ii) holds precisely when $\psi_{2,1}=\psi_{1,2}\comp(1_{A}\tensor 0\tensor 1_{G})=0$, that is, when (iii) holds.
\end{proof}

\begin{remark}
Condition (iii) is equivalent with requiring that $\psi_{p,q}=0$ for all~${p\geq 2}$ since these morphisms $\psi_{p,q}$ clearly factor through $\psi_{2,1}$.
\end{remark}

\begin{example}\label{Example-Loops-Semidirect}
The situation considered in Example~\ref{Example-Loops} is actually a loop action of the cyclic group of order two $\ZZ_{2}$ on the Klein four-group $V\cong A$ which is not a module structure. Indeed, the short exact sequence
\[
\vcenter{\xymatrix{0 \ar[r] & A \ar@{{ |>}->}[r] & X \ar@<.5ex>@{ >>}[r] & \{1,i\} \ar[r] \ar@{{ >}.>}@<.5ex>[l] & 0}}
\]
is split by the inclusion of $\ZZ_{2} \cong\{1,i\}$ in~$X$. (But the subloop $\{1,i\}$ is not normal in $X$, as $ij\cdot j=kj=-i\not \in \{1,i\}$ although $1j\cdot j=1$.) Hence $X\cong V\rtimes_{\psi}\ZZ_{2}$ for some action $\psi\colon{V\tensor \ZZ_{2}\to V}$ in the category of loops. Now~$(V,\psi)$ cannot be a~$\ZZ_{2}$-module, as we know that~$[R_{A},R_{A}]^{\Smith}\neq \Delta_{X}$; so $\psi_{2,1}$ must be non-trivial---and indeed, $\psi_{2,1}\ldbrack j,j,i\rdbrack = -1$. 
\end{example}

\begin{example}\label{Example-Module-in-Varieties}
In a semi-abelian variety of algebras $\V$, consider an abelian object~$A$ and a $G$-action determined by $\psi\colon A\tensor G \to A$. Then the coherence condition~${\psi_{2,1}=0}$ which must hold for $\psi$ to induce a module structure may be expressed as follows (cf.\ Example~\ref{Example-Varieties}):
\begin{gather*}
\begin{cases}
t(a_{1},\dots,a_{k},a_{k+1},\dots,a_{k+l},0,\dots,0)=0 & \text{in $A+A$}\\
t(a_{1},\dots,a_{k},0,\dots,0,g_{1},\dots,g_{m})=0 & \text{in $A+G$}\\
t(0,\dots,0,a_{k+1},\dots,a_{k+l},g_{1},\dots,g_{m})=0 & \text{in $A+G$}
\end{cases}\\
\Rightarrow \\
\psi(t(a_{1},\dots,a_{k+l},g_{1},\dots,g_{m}))=0,
\end{gather*}
for any term $t$ of arity $k+l+m$ in the theory of $\V$ and all $a_{1}$, \dots, $a_{k+l}\in A$ and $g_{1}$, \dots, $g_{m}\in G$. We believe this is a basic condition; certainly it is of the same level of complexity as for instance the characterisation of ideals due to Ursini~\cite{Ursini2}, valid in semi-abelian varieties~\cite{Janelidze-Marki-Tholen-Ursini}.
\end{example}

\begin{lemma}\label{Lemma-abker}
Consider a short exact sequence~\eqref{exte}. If $p$ is split by $s$ then the conjugation action of $X$ on $A$ admits a factorisation $c^{A,X}=\psi_{p}\comp (1_{A}\tensor p)$ if and only if $A$ is abelian and $c^{A,X}_{2,1}\comp (1_{A}\tensor 1_{A}\tensor s)=0$.
\end{lemma}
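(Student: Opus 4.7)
The plan is to realise the lemma as a direct application of Theorem~\ref{Theorem-Internal-Categories} to the degenerate reflexive graph
\[
\xymatrix@!0@=5em{X \ar@<1ex>[r]^-{p} \ar@<-1ex>[r]_-{p} & G \ar[l]|-{s}}
\]
in which both source and target morphisms are $p$ and the unit is $s$. For this graph one has $\Ker(d)=\Ker(c)=A$, and condition~(iv) of Theorem~\ref{Theorem-Internal-Categories} states exactly that the conjugation action $c^{A,X}$ factors through $1_A\tensor p$, which is the factorisation $c^{A,X}=\psi_p\comp(1_A\tensor p)$ demanded by the lemma. Using the equivalence of~(iii) and~(iv) in that theorem, the lemma reduces to showing that the pair of commutator conditions $[A,A]=0$ and $[A,A,\Im(s)]=0$ coincides with the pair ``$A$ is abelian'' and ``$c^{A,X}_{2,1}\comp(1_A\tensor 1_A\tensor s)=0$''.

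The first half of this equivalence is essentially Remark~\ref{abobj}: for any object in a finitely cocomplete homological category, being abelian means that its binary commutator morphism vanishes, which is precisely the condition $[A,A]=0$.

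For the second half, I would write $n=\ker(p)\colon A\to X$ and unfold $c^{A,X}_{2,1}=c^{A,X}\comp S^{A,X}_{2,1}$ (Notation~\ref{Notation-Higher-Orders}). Combining naturality of the conjugation action (Proposition~\ref{conjnormsub}) with naturality of the folding operation $S_{2,1}$ and with Lemma~\ref{c21=c12}, the composite $n\comp c^{A,X}_{2,1}\comp(1_A\tensor 1_A\tensor s)$ rewrites as $c_3^X\comp(n\tensor n\tensor s)$. Factoring $s$ as a regular epimorphism $s'\colon G\twoheadrightarrow\Im(s)$ followed by a monomorphism $\overline{s}\colon\Im(s)\hookrightarrow X$, and invoking Proposition~\ref{cr-reg-epi} to see that $1_A\tensor 1_A\tensor s'$ is still a regular epimorphism, the image of this morphism is precisely the ternary Higgins commutator $[A,A,\Im(s)]$. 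Since $n$ is a monomorphism, the morphism $c^{A,X}_{2,1}\comp(1_A\tensor 1_A\tensor s)$ vanishes if and only if $[A,A,\Im(s)]$ does.

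The main obstacle I expect is the naturality-plus-folding computation that yields the identity $n\comp c^{A,X}_{2,1}\comp(1_A\tensor 1_A\tensor s)=c_3^X\comp(n\tensor n\tensor s)$; once this is in hand, the image factorisation argument is routine and the rest of the proof is an appeal to results recalled in Sections~\ref{Section-Co-Smash}, \ref{Section-Semi-Direct-Products} and~\ref{Section-Crossed-Modules}.
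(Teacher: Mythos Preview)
Your argument is correct, but it takes a more circuitous path than the paper. The paper applies Proposition~\ref{Split-Exact} directly to the split epimorphism $p\colon X\to G$: that proposition gives a regular epimorphism from $(A\tensor A\tensor G)\rtimes(A\tensor A)$ onto $\Ker(1_A\tensor p)$, so $c^{A,X}$ factors through $1_A\tensor p$ exactly when it kills both pieces. The piece $c^{A,X}\comp(1_A\tensor a)$ equals $a\comp c^{A,A}$ by naturality of conjugation, yielding abelianness of $A$; the piece $c^{A,X}_{1,2}\comp(1_A\tensor a\tensor s)$ equals $c^{A,X}_{2,1}\comp(1_A\tensor 1_A\tensor s)$ by Lemma~\ref{c21=c12}, yielding the second condition. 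Your route instead invokes Theorem~\ref{Theorem-Internal-Categories}~(iii)$\Leftrightarrow$(iv) for the degenerate graph $d=c=p$, which is precisely the specialisation of Proposition~\ref{Split-Exact} packaged into commutator form, and then you translate the commutator condition $[A,A,\Im(s)]=0$ back into the vanishing of $c^{A,X}_{2,1}\comp(1_A\tensor 1_A\tensor s)$. The underlying mechanism is the same (both approaches ultimately rest on Proposition~\ref{Split-Exact} and Lemma~\ref{c21=c12}); the paper just avoids the round trip through commutators by staying at the level of morphisms throughout. One minor simplification in your write-up: since $s$ is a split monomorphism, the factorisation $s=\overline{s}\comp s'$ is trivial ($s'=1_G$), so the appeal to Proposition~\ref{cr-reg-epi} is unnecessary.
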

\begin{proof}
By Proposition~\ref{Split-Exact}, the morphism $c^{A,X}$ factors through $1_{A}\tensor p$ when
\[
c^{A,X}\comp S_{1,2}^{A,X}\comp(1_{A}\tensor a\tensor s)
\qquad\text{and}\qquad
c^{A,X}\comp(1_{A}\tensor a)
\]
are trivial. But $c^{A,X}\comp(1_{A}\tensor a)=a\comp c^{A,A}$ by naturality of the conjugation action, and~${c^{A,X}_{1,2}\comp(1_{A}\tensor a\tensor s) = c^{A,X}_{2,1}}\comp (1_{A}\tensor 1_{A}\tensor s)$ by Lemma~\ref{c21=c12}.
\end{proof}

\begin{theorem}\label{Theorem-Beck-2}
Let $A$ be an abelian object endowed with a $G$-action determined by $\psi\colon A\tensor G \to A$. Then $(A,\psi)$ is a $G$-module if and only if the conjugation action of $A\rtimes_{\psi} G$ on $A$ factors through the given $G$-action on~$A$ via the projection $p_{\psi}\colon{A\rtimes_{\psi} G \to G}$. In other words, 
\[
c^{A,A\rtimes_{\psi} G} = \psi\comp (1_{A}\tensor p_{\psi})= p_{\psi}^*(\psi).
\]
\end{theorem}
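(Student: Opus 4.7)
The plan is to chain together Theorem~\ref{Theorem-Beck}, Lemma~\ref{Lemma-abker} (applied to the split short exact sequence induced by $\psi$) and Example~\ref{Example-m=conj} (which says $\psi=s_{\psi}^{*}(c^{A,A\rtimes_{\psi}G})$). The second equality $\psi\comp (1_{A}\tensor p_{\psi})=p_{\psi}^{*}(\psi)$ is just Notation~\ref{Notation-Pulation}, so nothing needs to be shown there.

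First, by Theorem~\ref{Theorem-Beck}, $(A,\psi)$ is a $G$-module if and only if $\psi_{2,1}\colon A\tensor A\tensor G\to A$ vanishes. Second, applied to the split exact sequence~\eqref{sses} with $X=A\rtimes_{\psi}G$, Lemma~\ref{Lemma-abker} (together with the fact that $A$ is already abelian) says that $c^{A,A\rtimes_{\psi}G}$ factors through $1_{A}\tensor p_{\psi}$ if and only if
\[
c^{A,A\rtimes_{\psi}G}_{2,1}\comp(1_{A}\tensor 1_{A}\tensor s_{\psi})=0.
\]
So it suffices to identify this latter morphism with $\psi_{2,1}$.

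For this identification I would use naturality of the folding operation $S_{2,1}^{-,-}$ (which follows directly from the definition in Notation~\ref{Folding Operations} applied to the morphism $1_{A}\tensor 1_{A}\tensor s_{\psi}$), giving
\[
(1_{A}\tensor s_{\psi})\comp S_{2,1}^{A,G}=S_{2,1}^{A,A\rtimes_{\psi}G}\comp(1_{A}\tensor 1_{A}\tensor s_{\psi}),
\]
together with the identity $\psi=c^{A,A\rtimes_{\psi}G}\comp(1_{A}\tensor s_{\psi})$ from Example~\ref{Example-m=conj}. Composing, one gets
\[
\psi_{2,1}=\psi\comp S_{2,1}^{A,G}=c^{A,A\rtimes_{\psi}G}\comp(1_{A}\tensor s_{\psi})\comp S_{2,1}^{A,G}=c^{A,A\rtimes_{\psi}G}_{2,1}\comp(1_{A}\tensor 1_{A}\tensor s_{\psi}).
\]
Hence $(A,\psi)$ is a $G$-module if and only if $c^{A,A\rtimes_{\psi}G}$ factors through $1_{A}\tensor p_{\psi}$.

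Finally, when such a factorisation $c^{A,A\rtimes_{\psi}G}=\varphi\comp(1_{A}\tensor p_{\psi})$ exists, uniqueness of $\varphi$ together with Example~\ref{Example-m=conj} pins down $\varphi$: precomposing with $1_{A}\tensor s_{\psi}$ gives $\varphi=c^{A,A\rtimes_{\psi}G}\comp(1_{A}\tensor s_{\psi})=\psi$. So the factorisation is necessarily $\psi\comp(1_{A}\tensor p_{\psi})$, which finishes the proof. The only mildly delicate step is the naturality identification for $S_{2,1}$; this is routine from the explicit description of the folding operation, but one has to be careful to write the relevant commutative square at the level of the coproducts and then restrict to the co-smash subobjects.
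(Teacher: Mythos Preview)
Your proof is correct and follows essentially the same route as the paper's: reduce to $\psi_{2,1}=0$ via Theorem~\ref{Theorem-Beck}, apply Lemma~\ref{Lemma-abker} to the split extension determined by $\psi$, identify $c^{A,A\rtimes_{\psi}G}_{2,1}\comp(1_{A}\tensor 1_{A}\tensor s_{\psi})$ with $\psi_{2,1}$ via the naturality of $S_{2,1}$ and Example~\ref{Example-m=conj}, and then pin down the factorisation by precomposing with $1_{A}\tensor s_{\psi}$. The paper carries out exactly this chain of equalities, only written in the opposite direction.
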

\begin{proof}
We pass via Condition~(iii) in Theorem~\ref{Theorem-Beck}. Recall that ${X=A\rtimes_{\psi}G}$. Applying Lemma~\ref{Lemma-abker} to the split extension
\begin{equation*}\label{sdpsequ}
\xymatrix@=3em{0 \ar[r] & A \ar@{{ |>}->}[r]^-{k_{\psi}} & A\rtimes_{\psi}G \ar@{-{ >>}}@<.5ex>[r]^-{p_{\psi}} & G \ar[r] \ar@{{ >}->}@<.5ex>[l]^-{s_{\psi}} & 0}
\end{equation*}
shows that $c^{A,X}\colon A\tensor X\to A$ factors through the morphism $1_{A}\tensor p_{\psi}$ precisely when $c^{A,X}_{2,1}\comp(1_{A}\tensor 1_{A}\tensor s_{\psi})=0$. However,
\begin{align*}
c^{A,X}_{2,1}\comp(1_{A}\tensor 1_{A}\tensor s_{\psi}) &= c^{A,X}\comp S_{2,1}^{A,X}\comp(1_{A}\tensor 1_{A}\tensor s_{\psi})\\
&= c^{A,X}\comp(1_{A}\tensor s_{\psi})\comp S_{2,1}^{A,G}\\
&= \psi\comp S_{2,1}^{A,G} = \psi_{2,1}.
\end{align*}
Now suppose that $c^{A,X}$ does factor as a composite morphism $\overline{c}\comp (1_{A}\tensor p_{\psi})$; then $\overline{c} = \overline{c}\comp (1_{A}\tensor p_{\psi})\comp(1_{A}\tensor s_{\psi}) = c^{A,X}\comp(1_{A}\tensor s_{\psi}) = \psi$, which proves our claim.
\end{proof}

\section*{Acknowledgements}
Many thanks to Marino Gran for his comments on our work. Thanks also to Nelson Martins--Ferreira for explaining us his admissibility condition.


\begin{thebibliography}{10}

\bibitem{Barr}
M.~Barr, \emph{Exact categories}, Exact categories and categories of sheaves,
  Lecture Notes in Math., vol. 236, Springer, 1971, pp.~1--120.

\bibitem{Barr-Beck}
M.~Barr and J.~Beck, \emph{Homology and standard constructions}, Seminar on
  triples and categorical homology theory ({ETH}, {Z\"u}rich, 1966/67), Lecture
  Notes in Math., vol.~80, Springer, 1969, pp.~245--335.

\bibitem{BHP}
H.-J. Baues, M.~Hartl, and T.~Pirashvili, \emph{Quadratic categories and square
  rings}, J. Pure Appl. Algebra. \textbf{122} (1997), no.~1, 1--40.

\bibitem{Baues-Minian}
H.-J. Baues and E.~G. Minian, \emph{Crossed extensions of algebras and
  {H}ochschild cohomology}, Homology, Homotopy Appl. \textbf{4} (2002), no.~2,
  63--82.

\bibitem{Baues-Pirashvili}
H.-J. Baues and T.~Pirashvili, \emph{Quadratic endofunctors of the category of
  groups}, Adv.~Math. \textbf{141} (1999), no.~1, 167--206.

\bibitem{Beck}
J.~M. Beck, \emph{Triples, algebras and cohomology}, Reprints in Theory and
  Applications of Categories \textbf{2} (2003), 1--59, Ph.D. thesis, Columbia
  University, 1967.

\bibitem{Borceux-Bourn}
F.~Borceux and D.~Bourn, \emph{Mal'cev, protomodular, homological and
  semi-abelian categories}, Math. Appl., vol. 566, Kluwer Acad. Publ., 2004.

\bibitem{Borceux-Clementino}
F.~Borceux and M.~M. Clementino, \emph{Topological semi-abelian algebras},
  Adv.~Math. \textbf{130} (2005), 425--453.

\bibitem{BJK}
F.~Borceux, G.~Janelidze, and G.~M. Kelly, \emph{Internal object actions},
  Comment. Math. Univ. Carolinae \textbf{46} (2005), no.~2, 235--255.

\bibitem{Bourn1991}
D.~Bourn, \emph{Normalization equivalence, kernel equivalence and affine
  categories}, Category {T}heory, {P}roceedings {C}omo 1990 (A.~Carboni, M.~C.
  Pedicchio, and G.~Rosolini, eds.), Lecture Notes in Math., vol. 1488,
  Springer, 1991, pp.~43--62.

\bibitem{Bourn1996}
D.~Bourn, \emph{Mal'cev categories and fibration of pointed objects}, Appl.
  Categ. Structures \textbf{4} (1996), 307--327.

\bibitem{Bourn2000}
D.~Bourn, \emph{Normal subobjects and abelian objects in protomodular
  categories}, J.~Algebra \textbf{228} (2000), 143--164.

\bibitem{Bourn2001}
D.~Bourn, \emph{{$3\times 3$} {L}emma and protomodularity}, J.~Algebra
  \textbf{236} (2001), 778--795.

\bibitem{Bourn2002}
D.~Bourn, \emph{Intrinsic centrality and related classifying properties},
  J.~Algebra \textbf{256} (2002), 126--145.

\bibitem{Bourn2004}
D.~Bourn, \emph{Commutator theory in strongly protomodular categories}, Theory
  Appl. Categ. \textbf{13} (2004), no.~2, 27--40.

\bibitem{BG}
D.~Bourn and M.~Gran, \emph{Centrality and normality in protomodular
  categories}, Theory Appl. Categ. \textbf{9} (2002), no.~8, 151--165.

\bibitem{Bourn-Janelidze:Semidirect}
D.~Bourn and G.~Janelidze, \emph{Protomodularity, descent, and semidirect
  products}, Theory Appl. Categ. \textbf{4} (1998), no.~2, 37--46.

\bibitem{Bourn-Janelidze:Torsors}
D.~Bourn and G.~Janelidze, \emph{Extensions with abelian kernels in protomodular categories},
  Georgian Math. J. \textbf{11} (2004), no.~4, 645--654.

\bibitem{BJ07}
D.~Bourn and G.~Janelidze, \emph{Centralizers in action accessible categories}, Cah. Topol.
  G{\'e}om. Differ. Cat{\'e}g. \textbf{L} (2009), no.~3, 211--232.

\bibitem{Brown-Ellis}
R.~Brown and G.~J. Ellis, \emph{Hopf formulae for the higher homology of a
  group}, Bull. Lond. Math. Soc. \textbf{20} (1988), no.~2, 124--128.

\bibitem{Smash}
A.~Carboni and G.~Janelidze, \emph{Smash product of pointed objects in
  lextensive categories}, J.~Pure Appl.~Algebra \textbf{183} (2003), 27--43.

\bibitem{Carboni-Lambek-Pedicchio}
A.~Carboni, J.~Lambek, and M.~C. Pedicchio, \emph{Diagram chasing in {M}al'cev
  categories}, J.~Pure Appl. Algebra \textbf{69} (1991), 271--284.

\bibitem{AlanThesis}
A.~Cigoli, \emph{Centrality via internal actions and action accessibility via
  centralizers}, Ph.D. thesis, Universit\`a degli Studi di Milano, 2009.

\bibitem{Dedecker-Lue}
P.~Dedecker and A.~S.-T. Lue, \emph{A nonabelian two-dimensional cohomology for
  associative algebras}, Bull. Amer. Math. Soc. \textbf{72} (1966), no.~6,
  1044--1050.

\bibitem{Eilenberg-MacLane}
S.~Eilenberg and S.~{Mac\,Lane}, \emph{On the groups {$H(\pi,n)$} {II}}, Ann.
  of Math. \textbf{60} (1954), 49--139.

\bibitem{EverHopf}
T.~Everaert, \emph{Higher central extensions and {H}opf formulae}, J.~Algebra
  \textbf{324} (2010), 1771--1789.

\bibitem{EGVdL}
T.~Everaert, M.~Gran, and T.~Van~der Linden, \emph{Higher {H}opf formulae for
  homology via {G}alois {T}heory}, Adv.~Math. \textbf{217} (2008), no.~5,
  2231--2267.

\bibitem{EverVdLRCT}
T.~Everaert and T.~Van~der Linden, \emph{Relative commutator theory in
  semi-abelian categories}, J.~Pure Appl. Algebra \textbf{216} (2012),
  no.~8--9, 1791--1806.

\bibitem{Freese-McKenzie}
R.~Freese and R.~McKenzie, \emph{Commutator theory for congruence modular
  varieties}, London Math. Soc. Lecture Note Ser., vol. 125, Cambridge Univ.
  Press, 1987.

\bibitem{CCC}
M.~Hartl, \emph{Algebraic functor calculus and commutator theory}, in
  preparation, 2012.

\bibitem{Actions}
M.~Hartl and B.~Loiseau, \emph{Internal object actions in homological
  categories}, preprint {\texttt{arXiv:1003.0096v1}}, 2010.

\bibitem{Hartl-Vespa}
M.~Hartl and C.~Vespa, \emph{Quadratic functors on pointed categories},
  Adv.~Math. \textbf{226} (2011), no.~5, 3927--4010.

\bibitem{Higgins}
P.~J. Higgins, \emph{Groups with multiple operators}, Proc. Lond. Math. Soc.
  (3) \textbf{6} (1956), no.~3, 366--416.

\bibitem{Huq}
S.~A. Huq, \emph{Commutator, nilpotency and solvability in categories}, Quart.
  J. Math. Oxford \textbf{19} (1968), no.~2, 363--389.

\bibitem{Janelidze:Double}
G.~Janelidze, \emph{What is a double central extension? ({T}he question was
  asked by {R}onald {B}rown)}, Cah. Topol. G{\'e}om. Differ. Cat{\'e}g.
  \textbf{XXXII} (1991), no.~3, 191--201.

\bibitem{Janelidze}
G.~Janelidze, \emph{Internal crossed modules}, Georgian Math. J. \textbf{10} (2003),
  no.~1, 99--114.

\bibitem{Janelidze-Kelly}
G.~Janelidze and G.~M. Kelly, \emph{Galois theory and a general notion of
  central extension}, J.~Pure Appl. Algebra \textbf{97} (1994), no.~2,
  135--161.

\bibitem{Janelidze-Marki-Tholen}
G.~Janelidze, L.~M{\'a}rki, and W.~Tholen, \emph{Semi-abelian categories},
  J.~Pure Appl. Algebra \textbf{168} (2002), no.~2--3, 367--386.

\bibitem{Janelidze-Marki-Tholen-Ursini}
G.~Janelidze, L.~M{\'a}rki, W.~Tholen, and A.~Ursini, \emph{Ideal determined
  categories}, Cah. Topol. G\a'eom. Diff\a'er. Cat\a'eg. \textbf{LI} (2010),
  no.~2, 115--125.

\bibitem{Janelidze-Marki-Ursini}
G.~Janelidze, L.~M{\'a}rki, and A.~Ursini, \emph{Ideal and clots in universal
  algebra and in semi-abelian categories}, J.~Algebra \textbf{307} (2007),
  191--208.

\bibitem{Johnstone:Maltsev}
P.~T. Johnstone, \emph{Affine categories and naturally {M}al'cev categories},
  J.~Pure Appl. Algebra \textbf{61} (1989), 251--256.

\bibitem{Johnstone:Herds}
P.~T. Johnstone, \emph{The `closed subgroup theorem' for localic herds and
  pregroupoids}, J.~Pure Appl. Algebra \textbf{70} (1991), 97--106.

\bibitem{Kassel-Loday}
C.~Kassel and J.-L. Loday, \emph{Extensions centrales d'alg{\`e}bres de {L}ie},
  Ann. Inst. Fourier (Grenoble) \textbf{32} (1982), 119--142.

\bibitem{Kock-Pregroupoids}
A.~Kock, \emph{Fibre bundles in general categories}, J.~Pure Appl. Algebra
  \textbf{56} (1989), 233--245.

\bibitem{Loday}
J.-L. Loday, \emph{Spaces with finitely many non-trivial homotopy groups},
  J.~Pure Appl. Algebra \textbf{24} (1982), 179--202.

\bibitem{Lue-Crossed}
A.~S.-T. Lue, \emph{Non abelian cohomology of associative algebras}, Quart. J.
  Math. Oxford Ser. (2) \textbf{19} (1968), 159--180.

\bibitem{MM}
S.~Mantovani and G.~Metere, \emph{Internal crossed modules and {P}eiffer
  condition}, Theory Appl. Categ. \textbf{23} (2010), no.~6, 113--135.

\bibitem{MM-NC}
S.~Mantovani and G.~Metere, \emph{Normalities and commutators}, J.~Algebra \textbf{324} (2010),
  no.~9, 2568--2588.

\bibitem{MF-PhD}
N.~Martins-Ferreira, \emph{Low-dimensional internal categorical structures in
  weakly {M}al'cev sesquicategories}, Ph.D. thesis, University of Cape Town,
  2008.

\bibitem{NMF2}
N.~Martins-Ferreira, \emph{Star-multiplicative graphs in pointed protomodular categories},
  Theory Appl. Categ. \textbf{23} (2010), no.~9, 170--198.

\bibitem{MFS}
N.~Martins-Ferreira and M.~Sobral, \emph{On categories with semidirect
  products}, J.~Pure Appl. Algebra \textbf{216} (2012), no.~8--9, 1968--1975.

\bibitem{MFVdL3}
N.~Martins-Ferreira and T.~Van~der Linden, \emph{Further remarks on the
  ``{S}mith is {H}uq'' condition}, in preparation, 2012.

\bibitem{MFVdL}
N.~Martins-Ferreira and T.~Van~der Linden, \emph{A note on the ``{S}mith is {H}uq'' condition}, Appl.\ Categ.\
  Structures \textbf{20} (2012), no.~2, 175--187.

\bibitem{Montoli}
A.~Montoli, \emph{Action accessibility for categories of interest}, Theory
  Appl. Categ. \textbf{23} (2010), no.~1, 7--21.

\bibitem{Mostovoy2}
J.~Mostovoy, \emph{The notion of lower central series for loops}, Lect. Notes
  Pure Appl. Math., vol. 246, Chapman {\&} Hall/CRC, Boca Raton, FL, 2006.

\bibitem{Mostovoy1}
J.~Mostovoy, \emph{Nilpotency and dimension series for loops}, Comm. Algebra
  \textbf{36} (2008), no.~4, 1565--1579.

\bibitem{Orzech}
G.~Orzech, \emph{Obstruction theory in algebraic categories {I} and {II}},
  J.~Pure Appl. Algebra \textbf{2} (1972), 287--314 and 315--340.

\bibitem{Pedicchio}
M.~C. Pedicchio, \emph{A categorical approach to commutator theory}, J.~Algebra
  \textbf{177} (1995), 647--657.

\bibitem{Rodelo:Moore}
D.~Rodelo, \emph{Moore categories}, Theory Appl. Categ. \textbf{12} (2004),
  no.~6, 237--247.

\bibitem{RVdL}
D.~Rodelo and T.~Van~der Linden, \emph{The third cohomology group classifies
  double central extensions}, Theory Appl. Categ. \textbf{23} (2010), no.~8,
  150--169.

\bibitem{RVdL2}
D.~Rodelo and T.~Van~der Linden, \emph{Higher central extensions and cohomology}, Pr{\'e}-Publica{\c
  c}{\~ o}es DMUC \textbf{11-03} (2011), 1--68, submitted.

\bibitem{Smith}
J.~D.~H. Smith, \emph{{M}al'cev varieties}, Lecture Notes in Math., vol. 554,
  Springer, 1976.

\bibitem{Ursini2}
A.~Ursini, \emph{Osservazioni sulle variet{\`a} {BIT}}, Boll.\ Unione Mat.\
  Ital. \textbf{4} (1973), no.~7, 205--211.

\end{thebibliography}

\end{document}